\numberwithin{equation}{section}
\newtheorem{Theorem}{Theorem}[section]
\newtheorem{Corollary}[Theorem]{Corollary}
\newtheorem{Lemma}[Theorem]{Lemma}
\newtheorem{Proposition}[Theorem]{Proposition}
{\theoremstyle{definition}

\newtheorem{Remark}[Theorem]{Remark}}
\DeclareMathOperator{\sign}{sign}
\def\Q{\mathbb Q}
\def\C{\mathbb C}
\def\({\left(}
\def\){\right)}
\def\l{\lambda}
\def\G{\Gamma}
\def \Z{\mathbb Z}
\def\Im{\operatorname{Im}}
\def\Re{\operatorname{Re}}
\newcommand*\HYPERskip{&}
\newcommand*\pFq{
\begingroup
\catcode`\,\active
\def ,{\HYPERskip}%
\doHyper
}
\def\doHyper#1#2#3#4#5{%
\, _{#1}F_{#2}\left[\begin{matrix}#3 \smallskip \\ #4\end{matrix} \; ; \; #5\right]%
\endgroup
}
\begin{document}

\newcommand{\arXivNumber}{1803.06072}

\renewcommand{\thefootnote}{}

\renewcommand{\PaperNumber}{090}

\FirstPageHeading

\ShortArticleName{Computing Special $L$-Values of Certain Modular Forms with Complex Multiplication}

\ArticleName{Computing Special $\boldsymbol{L}$-Values of Certain Modular\\ Forms with Complex Multiplication\footnote{This paper is a~contribution to the Special Issue on Modular Forms and String Theory in honor of Noriko Yui. The full collection is available at \href{http://www.emis.de/journals/SIGMA/modular-forms.html}{http://www.emis.de/journals/SIGMA/modular-forms.html}}}

\Author{Wen-Ching Winnie LI~$^\dag$ and Ling LONG~$^\ddag$ and Fang-Ting TU~$^\ddag$}

\AuthorNameForHeading{W.-C.W.~Li, L.~Long and F.-T.~Tu}

\Address{$^\dag$~Department of Mathematics, Pennsylvania State University, University Park, PA 16802, USA}
\EmailD{\href{mailto:wli@math.psu.edu}{wli@math.psu.edu}}
\URLaddressD{\url{http://www.math.psu.edu/wli/}}

\Address{$^\ddag$~Department of Mathematics, Louisiana State University, Baton Rouge, LA 70803, USA}
\EmailD{\href{mailto:llong@lsu.edu}{llong@lsu.edu}, \href{mailto:tu@math.lsu.edu}{ftu@lsu.edu}}
\URLaddressD{\url{http://www.math.lsu.edu/~llong/}, \url{https://fangtingtu.weebly.com}}

\ArticleDates{Received April 03, 2018, in final form August 18, 2018; Published online August 29, 2018}

\Abstract{In this expository paper, we illustrate two explicit methods which lead to special $L$-values of certain modular forms admitting complex multiplication (CM), motivated in part by properties of $L$-functions obtained from Calabi--Yau manifolds defined over~$\Q$.}

\Keywords{$L$-values; modular forms; complex multiplications; hypergeometric functions; Eisenstein series}

\Classification{11F11; 11F67; 11M36; 33C05}

\begin{flushright}
\it Dedicated to Professor Noriko Yui
\end{flushright}

\section{Introduction}
In arithmetic geometry, the study of $L$-functions plays an important role as demonstrated by the well-known Birch and {Swinnerton--Dyer} (BSD) conjecture which connects the purely algebraic object, the algebraic rank of an elliptic curve $E$ defined over $\Q$, with a purely analytic object, the analytic rank of the Hasse--Weil $L$-function $L(E, s)$ of $E$ at the center of the critical strip. It further provides a detailed description of the leading coefficient of $L(E, s)$ expanded at this point. In this expository paper, we will explore how to use the theory of modular forms and hypergeometric functions to compute special $L$-values of modular forms. The cases we compute all have a common background of Calabi--Yau manifolds admitting complex multiplication (CM).

A Calabi--Yau manifold $X$ of dimension $d$ is a simply connected complex algebraic variety with trivial canonical bundle and vanishing $i$th cohomology group $H^i(X,{\mathcal O}_X)$ for $0<i<d$. When \smash{$d=1$}, $X$ is an elliptic curve; when $d=2$, $X$ is a K3 surface. A few papers of Noriko Yui, such as \cite{GY,Yui1}, are devoted to the modularity of Calabi--Yau manifolds $X$ defined over~$\Q$. More precisely, the action of the absolute Galois group $G_\Q$ of $\Q$ on the transcendental part of the middle \'etale cohomology $H^d_{\rm et}(X,\Q_\ell)$ should correspond to an automorphic representation according to Langlands' philosophy. In particular, when the Galois representation is $2$-dimensional, it should correspond to a classical modular form of weight $d+1$ so that both have the same associated $L$-functions. Recently, in~\cite{LTYZ} the second and third authors with Noriko Yui and Wadim Zudilin studied 14~truncated hypergeometric series that are related to weight-4 cuspidal eigenforms arising from rigid Calabi--Yau threefolds defined over $\Q$ by (super)congruences. One of the 14~modular forms in~\cite{LTYZ} has CM, that is, $f$ is invariant under the twist by a quadratic character associated to an imaginary quadratic extension~$K$ of~$\Q$. In this case there is an id\`ele class character $\chi = \chi(f)$ of~$K$ with algebraic type $d+1$ such that $L(f, s) = L\big(\chi, s-\frac{d}{2}\big)$. See \cite[Chapter~7, Section~4]{Li96} by the first author for more detail.

As explained in \cite{Deligne} by Deligne, for a weight-$k$ modular form $f(\tau)$, the values of the attached $L$-function $L(f,s)$ at the integers within its critical strip, namely $L(f, n+1)$ for $0\le n\le k-2$, are of special interest, called the periods of~$f$. Precisely, these values can be expressed as (cf.~\cite{Kohnen-Zagier})
\begin{gather}\label{eq:rn}
L(f,n+1)=\frac{(2\pi)^{n+1}}{n!}\int_0^\infty f({\rm i}t)t^n {\rm d}t, \qquad 0\le n\le k-2.
\end{gather}

Though literally $L(f,n+1)$ can be computed as a line integral, the computation can be unraveled as an iterated integral. We will illustrate this idea through examples. As these are periods of modular forms, the theory of modular forms plays a central role. Additionally, the theory of hypergeometric functions provides helpful perspectives for obtaining exact values. Our main results below are motivated in part
by the Clausen formula for hypergeometric functions (see~\eqref{eq:Clausen}), which expresses the square of a one-parameter family of integrals in terms of a~one-parameter family of iterated double integrals.

\begin{Theorem}\label{thm:1} Let $\eta(\tau)$ be the Dedekind eta function. Let $\psi$ be the id\`ele class character of~$\Q\big(\sqrt{-1}\big)$ such that $L(\psi, s-1/2)$ is the Hasse--Weil $L$-function of the CM elliptic curve $E_1\colon y^4+x^2=1$ of conductor~$32$. Then
\begin{gather*}2 L(\psi,{1/2})^2=L\big(\psi^2, {1}\big).\end{gather*}
In terms of cusp forms with CM by $\Q\big(\sqrt{-1}\big)$, the above identity can be restated as
\begin{gather*}2L\big(\eta(4\tau)^2\eta(8\tau)^2, 1\big)^2 = L\big(\eta(4\tau)^6, 2\big),\end{gather*}
where $\eta(4\tau)^2\eta(8\tau)^2$ is the weight-$2$ level $32$ cuspidal eigenform corresponding to $\psi$, and $\eta(4\tau)^6$ is the weight-$3$ level~$16$ cuspidal eigenform corresponding to~$\psi^2$.
\end{Theorem}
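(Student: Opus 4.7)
My strategy is to realize both $L$-values as hypergeometric periods and derive the identity from Clausen's formula~\eqref{eq:Clausen}, whose central role in this paper is already highlighted in the introduction. Write $f_2(\tau):=\eta(4\tau)^2\eta(8\tau)^2$ and $f_3(\tau):=\eta(4\tau)^6$. Applying~\eqref{eq:rn} gives
\[
L(f_2,1) \,=\, 2\pi\int_0^\infty f_2({\rm i}t)\,{\rm d}t, \qquad
L(f_3,2) \,=\, (2\pi)^2\int_0^\infty f_3({\rm i}t)\,t\,{\rm d}t.
\]

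I would next introduce a Hauptmodul $\lambda=\lambda(\tau)$ for a genus-zero congruence subgroup relevant to both $\Gamma_0(32)$ and $\Gamma_0(16)$ (the classical modular $\lambda$-function, suitably scaled, is a natural candidate, uniformizing both the CM elliptic curve $E_1$ and the K3 surface attached to $\psi^2$). Using the Picard--Fuchs equations dictated by the CM structure, one should be able to write
\[
f_2({\rm i}t)\,{\rm d}t \,=\, c_2\,{}_2F_1\bigl(\tfrac14,\tfrac14;1;\lambda\bigr)\,g_2(\lambda)\,{\rm d}\lambda,
\]
and, analogously,
\[
f_3({\rm i}t)\,t\,{\rm d}t \,=\, c_3\,{}_3F_2\bigl(\tfrac12,\tfrac12,\tfrac12;1,1;\lambda\bigr)\,g_3(\lambda)\,{\rm d}\lambda,
\]
for constants $c_i$ and algebraic weight functions $g_i$. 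These particular hypergeometric parameters are forced by the CM by $\Q\bigl(\sqrt{-1}\bigr)$: the ${}_2F_1(\tfrac14,\tfrac14;1;\lambda)$ is a period of a CM elliptic curve in a Hesse-type family, and ${}_3F_2(\tfrac12,\tfrac12,\tfrac12;1,1;\lambda)$ is a period of its symmetric square, the K3 surface whose transcendental motive is governed by $\psi^2$.

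Finally, I would apply Fubini to $L(f_2,1)^2$ and invoke Clausen's identity $\bigl({}_2F_1(\tfrac14,\tfrac14;1;\lambda)\bigr)^2={}_3F_2(\tfrac12,\tfrac12,\tfrac12;1,1;\lambda)$ inside the doubled integrand, so that the iterated double $\lambda$-integral coming from the square matches the single $\lambda$-integral representing $L(f_3,2)$. The principal obstacle is to track the pre-factors $g_i$ and constants $c_i$ sharply enough to verify that the rational ratio $L(f_3,2)/L(f_2,1)^2$ is exactly~$2$: this demands a careful normalization of $\lambda$ at the cusps $0$ and ${\rm i}\infty$, together with a beta-type evaluation that collapses the doubled $\lambda$-integral back to a single one. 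A further subtlety is that $L(f_3,2)$ carries the extra factor $t$ in its integrand, so the matching will need to absorb this weight on the hypergeometric side, presumably via an integration by parts or by exploiting the extra copy of $\tau$ that naturally appears when one squares $L(f_2,1)$ via a Rankin-type change of variables $(t_1,t_2)\mapsto(u,v)$ with $u=t_1+t_2$.
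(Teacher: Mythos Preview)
Your plan has a structural gap at the key step. Squaring $L(f_2,1)=\int_0^1 H(\lambda)\,{\rm d}\lambda$ produces the double integral $\int_0^1\!\int_0^1 H(\lambda_1)H(\lambda_2)\,{\rm d}\lambda_1{\rm d}\lambda_2$, with \emph{independent} arguments $\lambda_1,\lambda_2$. Clausen's formula~\eqref{eq:Clausen} is a \emph{pointwise} identity $\bigl({}_2F_1(\lambda)\bigr)^2={}_3F_2(\lambda)$ at a single $\lambda$; it does not collapse $H(\lambda_1)H(\lambda_2)$ into anything involving a ${}_3F_2$. So ``Fubini then Clausen inside the doubled integrand'' cannot turn the square into a single $\lambda$-integral. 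Relatedly, your proposed device for absorbing the extra factor $t$ in $L(f_3,2)$ --- integration by parts or a change $(t_1,t_2)\mapsto(t_1+t_2,\cdot)$ --- does not convert $f_2({\rm i}t_1)f_2({\rm i}t_2)$ into $f_3({\rm i}(t_1+t_2))$; there is no multiplicativity of that kind here. This is precisely the place where the argument needs a new idea, and the proposal does not supply one.

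The paper proceeds quite differently, and in fact does \emph{not} use Clausen for Theorem~\ref{thm:1} at all. It evaluates the two $L$-values separately as explicit $\Gamma$-products. For $f_{32}=\eta(4\tau)^2\eta(8\tau)^2$ one writes the invariant differential on $y^4=1-x^2$ as $8\pi{\rm i}\,f_{32}\,{\rm d}\tau$ via Lemma~\ref{lem:lambda-diff}, giving $L(f_{32},1)=\tfrac18 B(1/2,1/4)=2^{-7/2}B(1/4,1/4)$. For $g=\eta(4\tau)^6$ one first computes $L(g,1)$: using $g(2\tau)=\eta(8\tau)^6=\tfrac14\theta_2(4\tau)^2 f_{32}(\tau)$ and~\eqref{eq:theta3-2F1}, the integrand becomes $(1-\lambda)^{-3/4}\,{}_2F_1(\tfrac12,\tfrac12;1;\lambda)$, whose $\lambda$-integral is a ${}_3F_2$ at $1$ that collapses (one upper and one lower parameter both equal $1$) to a ${}_2F_1(\tfrac12,\tfrac12;\tfrac54;1)$ evaluated by Gauss summation~\eqref{eq:gauss}, yielding $L(g,1)=\tfrac{1}{32\pi}B(1/4,1/4)^2$. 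The passage from $L(g,1)$ to $L(g,2)$ is then made via the functional equation~\eqref{eq:feq-idele} for $L(\psi^2,s)$, with the root number computed explicitly from the local Gauss sum at the ramified place. Comparing the two closed forms gives $2L(f_{32},1)^2=L(g,2)$. If you want Clausen to play a role, look instead at the first identity of Theorem~\ref{thm:2}, where both sides reduce to hypergeometric values at $1$ and Clausen is invoked at that single point, not under an integral.
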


\begin{Theorem}\label{thm:2}Let $\chi$ be the id\`ele class character of $\Q\big(\sqrt{-3}\big)$ such that $L(\chi, s-1/2)$ is the Hasse--Weil $L$-function of the CM elliptic curve $E_2\colon x^3+y^3=1/4$ of conductor~$36$. Then
\begin{gather*} \frac32 L(\chi,1/2)^2=L\big(\chi^2,1\big) \qquad \text{and} \qquad {\frac 83} L(\chi,1/2)^3=L\big(\chi^3,3/2\big).\end{gather*}
These identities can be reformulated in terms of cusp forms with CM by $\Q\big(\sqrt{-3}\big)$ as
\begin{gather*} \frac{3}{2}L\big(\eta(6\tau)^4,1\big)^2= L\big(\eta(2\tau)^3\eta(6\tau)^3,2\big) \qquad \text{and} \qquad \frac 83 L\big(\eta(6\tau)^4,1\big)^3=L\big(\eta(3\tau)^8,3\big).\end{gather*} Here $\eta(6\tau)^4$ is the level~$36$ weight-$2$ cuspidal Hecke eigenform corresponding to $\chi$, $\eta(2\tau)^3\eta(6\tau)^3$ is the level $12$ weight-$3$ Hecke eigenform corresponding to $\chi^2$, and $\eta(3\tau)^8$ is the weight-$4$ Hecke eigenform of level $9$ corresponding to~$\chi^3$.
\end{Theorem}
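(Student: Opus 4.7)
My plan is to convert each $L$-value into a real integral via~\eqref{eq:rn}, recognise the resulting one-variable integrals as periods attached to the CM elliptic curve~$E_2$ (hence as explicit ${}_2F_1$ evaluations at a~CM point), and then invoke hypergeometric product identities---Clausen's formula for the quadratic identity and an analogous triple-product identity for the cubic one---to rewrite powers of a~${}_2F_1$ as a~${}_3F_2$ (respectively a~${}_4F_3$) that matches the hypergeometric representation of the right-hand side.

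Concretely,~\eqref{eq:rn} yields
\begin{gather*}
L\bigl(\chi,\tfrac12\bigr)=2\pi\int_0^\infty \eta(6{\rm i}t)^4\,{\rm d}t,\qquad L\bigl(\chi^2,1\bigr)=4\pi^2\int_0^\infty \eta(2{\rm i}t)^3\eta(6{\rm i}t)^3\,t\,{\rm d}t,\\
L\bigl(\chi^3,\tfrac32\bigr)=4\pi^3\int_0^\infty \eta(3{\rm i}t)^8\,t^2\,{\rm d}t.
\end{gather*}
Since $\eta(6\tau)^4\,{\rm d}\tau$ pulls back from a~rational multiple of the invariant differential on $E_2\colon x^3+y^3=1/4$ through the modular parametrization of~$X_0(36)$, the first integral is a~period of~$E_2$, classically expressible as an explicit value of ${}_2F_1(1/3,2/3;1;z_0)$ at a~CM argument $z_0$ and evaluable by the Chowla--Selberg formula as a~ratio of $\Gamma(1/3)^3$ and~$\pi$. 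For the quadratic identity I would then square this value and apply Clausen's formula, converting the squared ${}_2F_1$ into a~${}_3F_2$ at the same argument~$z_0$; matching this ${}_3F_2$ with the one obtained by rewriting the integral for $L(\chi^2,1)$ in the same hypergeometric coordinate on~$X_0(12)$---where $\eta(2\tau)^3\eta(6\tau)^3$ is the weight-$3$ CM form attached to~$\chi^2$---produces the constant~$3/2$.

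For the cubic identity I would follow the same outline with a~triple-product hypergeometric identity in place of Clausen: a~cubic analogue, rooted in the Borwein cubic theta-function identities and in the symmetric-cube structure on~$\chi$, that expresses $\bigl({}_2F_1(1/3,2/3;1;z)\bigr)^3$ as a~${}_4F_3$ value at the same~$z$. Cubing the integral for $L(\chi,1/2)$ and matching the resulting ${}_4F_3$ value against the one produced from the integral representation of $L(\chi^3,3/2)$ via the modular parametrization of~$X_0(9)$ then yields the constant~$8/3$.

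The main obstacle is the bookkeeping of normalisations: one must choose a~common modular parameter linking $X_0(36)$, $X_0(12)$ and~$X_0(9)$, track the Jacobian factors introduced by pulling back the holomorphic differentials through the relevant degeneracy maps, and verify that the CM point~$z_0$ maps correctly under these transformations so that each hypergeometric identity specialises at the correct evaluation point. Once this is done, the explicit constants $3/2$ and~$8/3$ emerge as the algebraic output of a~Chowla--Selberg-style matching of the two sides at $z=z_0$; verifying analytically that the various one-, two-, and three-fold integrals coincide with the stated hypergeometric values---rather than with a~slightly different rescaling---is where the bulk of the technical care is needed.
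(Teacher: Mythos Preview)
Your outline for the quadratic identity is in the right spirit and matches the paper's approach in its key step: both use Clausen's formula. However, the execution differs. The paper does not evaluate a ${}_2F_1$ at an interior CM argument~$z_0$; instead it writes $h_3(\tau/2)=\eta(\tau)^3\eta(3\tau)^3$ in terms of the Borwein cubic theta functions, so that the period integral for $L(h_3(\tau/2),1)$ becomes $\frac{\pi}{3\sqrt 3}\,{}_3F_2\bigl(\tfrac13,\tfrac23,\tfrac12;1,1;1\bigr)$ at the boundary point $z=1$. Clausen at $z=1$ gives ${}_2F_1(\tfrac16,\tfrac13;1;1)^2$, which is then evaluated by Gauss summation; the passage from $L(h_3,1)$ to $L(h_3,2)$ is done directly via the $\eta$ transformation law (equivalently, the functional equation). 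The separate value $L(f_{36},1)$ is computed as the beta integral $\frac{1}{3\cdot 2^{4/3}}B(\tfrac13,\tfrac13)$ via the parametrization $x=\lambda^{1/3}$, $y=(1-\lambda)^{1/3}$ of the Fermat cubic. So the constants emerge from explicit Gamma values, not from a Chowla--Selberg matching at a fixed interior~$z_0$.

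For the cubic identity your proposal has a genuine gap. You invoke ``a cubic analogue of Clausen'' expressing $\bigl({}_2F_1(\tfrac13,\tfrac23;1;z)\bigr)^3$ as a single ${}_4F_3$ at the same~$z$; no such identity is known, and Clausen's formula is quite special---it does not generalise to cubes. The paper abandons hypergeometric methods entirely for this part and instead evaluates Eisenstein series at the CM point $\tau_0={\rm i}/\sqrt3$. Concretely, it shows $L(G_k(q),k)=(-1/\sqrt{-3})^k\,\mathcal G^\ast_{k,(1;3)}(\tau_0)$, identifies $\mathcal G^\ast_{1,(1;3)}=-2\pi{\rm i}\,\phi/6$ and $\mathcal G^\ast_{3,(1;3)}=-\tfrac{(2\pi{\rm i})^3}{2}\phi_1^3$ in terms of the generators $\phi,\phi_1$ of the ring of $\Gamma(3)$-modular forms, and then computes $\phi(\tau_0)$ and $\phi_1(\tau_0)$ explicitly from $\eta$-values at $\tau_0,2\tau_0,\dots,12\tau_0$ obtained via modular polynomials. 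The constant $8/3$ then drops out of $L(\chi^3,3/2)=\tfrac43\cdot\tfrac12(2\pi/\sqrt3)^3\phi_1(\tau_0)^3$ compared with $L(\chi,1/2)=(2\pi/\sqrt3)\phi(\tau_0)/6$. If you want a hypergeometric proof of the cubic identity, you would need to supply and prove the missing ${}_4F_3$ identity; absent that, your sketch does not close.
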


In connection with Calabi--Yau manifolds explained above, the two weight-2 forms \linebreak $\eta(4\tau)^2\eta(8\tau)^2$ and $\eta(6\tau)^4$ come from the elliptic curves $E_1$ and $E_2$, respectively. The \smash{weight-3} form $\eta(4\tau)^6$ arises from any one of the elliptic K3 surfaces labeled by $\mathscr A$ in \cite{S-B} by Stienstra and Beukers. One of the defining equations is $y^2+\big(1-t^2\big)xy-t^2y=x^3-t^2x^2$. The relation between such a K3 surface and the elliptic curve $y^4+x^2=1$ is through the Shioda--Inose structure described in~\cite{Shioda-Inose}. In~\cite{S-B} Stienstra and Beukers showed that the monodromy group $\G_2$ of the elliptic fiberation of $\mathscr A$ is isomorphic to an index-2 subgroup of the congruence group~$\G_1(5)$. The group $\G_2$ itself is a noncongruence group. See~\cite{ALL} by Atkin and the first two authors for explicit congruence relations between the unique normalized weight-3 cusp form for~$\G_2$ and the congruence cusp form $\eta(4\tau)^6$.

Similarly, one of the algebraic varieties corresponding to the weight-3 form $\eta(2\tau)^3\eta(6\tau)^3$ is the elliptic K3 surface $y^2+\big(1-3t^2\big)xy-t^4\big(t^2-1\big)y=x^3$ (labeled by $\mathscr C$ in~\cite{S-B}). This K3 surface is related to $E_2$ also via the Shioda--Inose structure. The symmetric square $L$-function of $\eta(6 \tau)^4$ is $L\big(\eta(2\tau)^3\eta(6\tau)^3,s\big)L(\chi_{-3}, s-1)$ and similarly the symmetric square $L$-function of $\eta(4\tau)^2\eta(8\tau)^2$ is $L\big(\eta(4\tau)^6, s\big)L(\chi_{-1}, s-1)$. {Here $\chi_{-d}$ denotes the quadratic character attached to the imaginary quadratic extension $\Q\big(\sqrt{-d}\big)$ of~$\Q$.} A Calabi--Yau threefold corresponding to the weight-4 modular form $\eta(3\tau)^8$ via modularity is defined by $X_1^3+X_2^3+X_3^3+X_4^3-4X_5X_6=0$, $X_5^4+2X_6^2-3X_1X_2X_3X_4=0$, see~\cite{LTYZ}. The symmetric cube $L$-function of $\eta(6 \tau)^4$ is $L\big(\eta(3\tau)^8,s\big)L\big(\eta(6\tau)^4, s-1\big)$.

Our guiding philosophy is in line with the special values of the Riemann zeta function:
\begin{gather*} \zeta(2n)={\frac 12}\sum_{m\in \Z\setminus \{0\}}\frac1{m^{2n}}=(-1)^{n+1}\frac{B_{2n}\cdot(2\pi)^{2n}}{2(2n)!},\end{gather*}
where $B_n$ denotes the $n$th Bernoulli number. The Bernoullli numbers satisfy Kummer congruences which are important for the development of $p$-adic modular forms. This {was} extended to the ring of Gaussian integers by Hurwitz who showed that for any positive integer $k$, the numbers
\begin{gather*}
N(k):=\sum_{(m,n)\in \Z^2\setminus \{(0,0)\}}\frac{1}{\big(m+n\sqrt{-1}\big)^{4k}}=\sum_{(m,n)\in \Z^2\setminus \{(0,0)\}}\frac{\big(m-n\sqrt{-1}\big)^{4k}}{\big(m^2+n^2\big)^{4k}}
\end{gather*} satisfy \begin{gather*}N(k)=N(1)^k\cdot(\text{a rational number}).\end{gather*} See Hurwitz's paper~\cite{Hurwitz} or the excellent expository paper \cite{LMP} by Lee, R.~Murty, and Park in which they reproved Hurwitz's result using the fact that for any quadratic imaginary number~$\tau$, there is a transcendental number~$b_{\Q(\tau)}$, depending only on the field $\Q(\tau)$ (see~\eqref{eq:bk}, the Chowla--Selberg formula), such that for any integral weight-$k$ modular form $f$ with algebraic coefficients,
\begin{gather*}
f(\tau)/b_{\Q(\tau)}^{k}\in \overline \Q,
\end{gather*}(see \cite[Proposition~26]{Zagier} by Zagier).

In Hurwitz theorem, $N(k)$ is the Eisenstein series (see \cite[Section~2.2]{Zagier} by Zagier)
\begin{gather*}
G_{4k}(\tau):={\frac 12}\sum_{(m,n)\in \Z^2\setminus \{(0,0)\}}\frac{1}{(m+n\tau)^{4k}}=\zeta(4k)E_{4k},\qquad E_{4k}(\tau)=1-\frac{8k}{B_{4k}}\sum_{n=1}^\infty \frac{n^{4k-1}q^n}{1-q^n},
\end{gather*}where $q={\rm e}^{2\pi {\rm i}\tau}$, evaluated at $\tau=\sqrt{-1}$. Along the same vein, in \cite{Villegas-Zagier} Rodriguez-Villegas and Zagier consider the Hecke character $\varphi$ corresponding to an elliptic curve admitting CM by the imaginary quadratic field~$\Q\big(\sqrt{-7}\big)$ and they obtain a very nice formula relating the central values of the $L$-functions of $\varphi^{2k+1}$.

A special case of Damerell's result (see \cite{Damerell, Damerell2, Yager}) says that given a CM elliptic curve defined over $\Q$ with the corresponding id\`ele class character $\rho$, for each positive integer~$n$ there is a~rational number $C_{\rho,n}$ such that \begin{gather*}L\big(\rho^n, n/2\big)=C_{\rho,n} L(\rho, 1/2)^n.\end{gather*} It has been found that these numbers are generalizations of Bernoulli numbers with important arithmetic meanings (\cite{Coates-Wiles} by Coates and Wiles and \cite{Yager} by~Yager). See~\cite{Manin} by Manin, \cite{Shimura} by Shimura, \cite{Deligne} by Deligne, \cite {Kohnen-Zagier} by Kohnen and Zagier for some classical discussions on periods of modular forms, \cite{HaberlandI,HaberlandII,HaberlandIII} by Haberland and \cite{P-P} by Pa\c{s}ol and Popa for periods and inner products, \cite{ORS} by Ono, Rolen and Sprung and {\cite{RWZ} by Rogers, Wan, and Zucker} for more recent developments. Specifically, in \cite{RWZ}, from a different perspective the authors computed the periods $L\big(\eta(4\tau)^6,1\big)$ and $L\big(\eta(2\tau)^3\eta(6\tau)^3,2\big)$. Our two theorems determine the rational number $C_{\rho, n}$ explicitly for special choices of elliptic curves and positive integers~$n$. In the end of the paper, using CM values of Eisenstein series and modular polynomials, we compute a few more $C_{\chi,n}$ values, listed in Table~\ref{table:1}. Our approach can be adapted to compute $C_{\rho, n}$ for other id\`ele class characters $\rho$ associated to CM elliptic curves defined over~$\Q$; see \cite[p.~483]{Silverman} by Silverman for the full list of such elliptic curves. Note that in each case the class number of the corresponding CM field is~1. Also we focus on the periods whose corresponding Eisenstein series are holomorphic, but other periods can be handled similarly.

\section{Preliminaries}\label{section2}
\subsection{The Chowla--Selberg formula}
The Chowla--Selberg formula says that if $E$ is an elliptic curve whose endomorphism ring over~$\C$ is an order of an imaginary quadratic field $K=\Q\big(\sqrt{-d}\big)$ with fundamental discriminant $-d$, then all periods of $E$ are algebraic multiples of a particular transcendental number
\begin{gather}\label{eq:bk}
b_K:=\G\left(\frac 12\right) \prod_{0<a<d}\G\left(\frac ad\right)^{\frac{n\epsilon(a)}{4h_K}},
\end{gather}where $\G\(\cdot \)$ stands for the Gamma function, $n$ is the number of torsion elements in~$K$, $\epsilon$ is the primitive quadratic Dirichlet character modulo~$d$, that is, the quadratic character attached to~$K$ over~$\Q$, and $h_K$ is the class number of~$K$, see \cite{SC} by Selberg and Chowla, \cite{Gross} by Gross, or \cite[equation~(97)]{Zagier}. For example \begin{gather*}
b_{\Q(\sqrt{-4})}=\G\(\frac 12\)\frac{\G\(\frac 14\)}{\G\(\frac 34\)} \qquad \text{and}\qquad
b_{\Q(\sqrt{-3})}=\G\(\frac 12\)\(\frac{\G\(\frac 13\)}{\G\(\frac 23\)}\)^{3/2}.
\end{gather*}

\subsection{Gamma and beta functions}
The Gamma function satisfies two important properties in addition to the functional equation $\G(x+1)/\G(x)=x$ when $x$ is not a non-positive integer. See~\cite{AAR} by Andrews, Askey and Roy for details. The first one is the reflection formula: for $a\in \C$
\begin{gather}\label{eq:reflection}
\frac{1}{\G(a)\G(1-a)}=\frac{\sin(a\pi)}{\pi}.
\end{gather}For example, $\G\(\frac 12\)^2=\pi$, $\G\(\frac 14\)\G\(\frac 34\)=\sqrt{2}\pi$ so that \begin{gather*}b_{\Q(\sqrt{-4})}=\frac{\sqrt{2}}{2\pi}\G\(\frac 12\)\G\(\frac 14\)^2, \qquad \text{and} \qquad b_{\Q(\sqrt{-3})}=\(\frac{3}4\)^{3/4}\frac1 \pi\G\(\frac 13\)^3.\end{gather*}

The second one is the multiplication formula for $\G$: for integer $m\ge 1$ and $a\in \C$
\begin{gather}\label{eq:multiplication}
\G(a)\G\(a+\frac 1m\)\cdots \G\(a+\frac {m-1}m\)=\G(ma)\cdot (2\pi)^{(m-1)/2}m^{\frac 12-ma}.
\end{gather}

Now recall the beta function. For $a,b\in \C$ with positive real parts,
\begin{gather}\label{eq:beta}
B(a,b):=\int_0^1 x^{a-1}(1-x)^{b-1} {\rm d}x.
\end{gather}It is known that
\begin{gather}\label{eq:beta-gamma}B(a,b)=\frac{\G(a)\G(b)}{\G(a+b)}.\end{gather} For example, $B(1/3,1/3)=\frac{3^{1/2}}{2\pi}\G\(\frac 13\)^3$ is an algebraic multiple of $b_{\Q(\sqrt{-3})}$.

\subsection{Hypergeometric functions}

The (generalized) hypergeometric function with parameters $a_i$, $b_j$ and argument $x$ is defined by
\begin{gather*}
\pFq{n+1}{n}{a_1&a_2&\cdots& a_{n+1}}{&b_1&\cdots& b_n}{x}:=\sum_{k\ge 0}\frac{(a_1)_k\cdots (a_{n+1})_k}{(b_1)_k\cdots (b_n)_k}\frac{x^k}{k!},
\end{gather*}
where $(a)_k:=a(a+1)\cdots(a+k-1)$ is the Pochhammer symbol with the convention $(a)_0=1$, and can be written as $(a)_k=\G(a+k)/\G(a)$.
For $a_i$, $b_j$ such that the formal power series is well-defined, the radius of convergence for $x$ is typically~1. In particular,
${}_1F_0[a_1\,;\, x] 
=(1-x)^{-a_1}$. These functions can be defined recursively as follows \cite[equation~(2.2.2)]{AAR}: when $\operatorname{Re}(b_n)>\operatorname{Re}(a_{n+1})>0$,
\begin{gather}\label{eq:recursive}
\pFq{n+1}{n}{a_1&a_2&\cdots& a_{n+1}}{&b_1&\cdots& b_n}{x}\\
{} =\frac{1}{B(a_{n+1},b_n-a_{n+1})}\int_0^1 y^{a_{n+1}-1}(1-y)^{b_n-a_{n+1}-1}\pFq{n}{n-1}{a_1&a_2&\cdots& a_{n}}{&b_1&\cdots& b_{n-1}}{xy}{\rm d}y.\nonumber \end{gather}

In the classic developments, the $_2F_1$ functions play a vital role. They can be written using the above recipe in the following Euler integral formula
\begin{gather}\label{eq:2F1-integral}
\pFq{2}{1}{a&b}{&c}{x}=\frac 1{B(b,c-b)} \int_0^1 y^{b-1}(1-y)^{c-b-1}(1-xy)^{-a} {\rm d}y.
\end{gather}In this sense, we say the $_2F_1$ value corresponds to a 1-integral, or 1-period.

The Gauss summation formula \cite[Theorem 2.2.2]{AAR} says that for $a,b,c\!\in\! \mathbb C$ with \smash{$\operatorname{Re}(c\!-\!a\!-\!b){>}0$},
\begin{gather}\label{eq:gauss}
\pFq{2}{1}{a&b}{&c}{1}=\frac{\G(c)\G(c-a-b)}{\G(c-a)\G(c-b)}.
\end{gather} A virtue of such a formula is to give the precise value of the integral \eqref{eq:2F1-integral} in terms of Gamma values.

Here is a version of the Clausen formula \cite[p.~116]{AAR}: for $a,b,x\in \C$
\begin{gather}\label{eq:Clausen}
\pFq{2}{1}{a&b}{&a+b+\frac 12}{x}^2=\pFq{3}{2}{2a&2b&a+b}{&2a+2b&a+b+\frac 12}{x}
\end{gather} as long as both sides converge.
If we write both hand sides using the integral forms via \eqref{eq:recursive}, then the Clausen formula expresses the square of a certain 1-integral as an iterated 2-integral. From the classic result of Schwarz, hypergeometric functions are tied to automorphic forms of triangle groups. See \cite{Yang} by Yang for using hypergeometric functions to compute automorphic forms for genus 0 Shimura curves with three elliptic points and see \cite{Tu-Yang} by the third author and Yang for some applications.

\subsection{Modular forms}\label{ss:mf}
There are many texts on modular forms, for instance \cite{Cohen-stromberg, Diamond-Shurman, Li96, Ogg, Schoeneberg-B, Shimura-mf, Zagier}. Let $q={\rm e}^{2\pi {\rm i} \tau}$, where $\tau$ lies in the upper half complex plane~$\mathfrak H$. One of the most well-known modular forms is the discriminant modular form $\Delta(z)=\eta(\tau)^{24}$ where $\eta(\tau)=q^{1/24}\prod\limits_{n=1}^\infty (1-q^n)$ is the weight-1/2 Dedekind eta function. Many modular forms can be written as eta products or quotients.

Among all modular forms, theta functions and Eisenstein series (see \cite{Schoeneberg-B,Zagier}) play important roles. We first recall below the classic weight-1/2 Jacobi theta functions following mainly~\cite{Zagier} by Zagier:
\begin{gather*}
\theta_2(\tau):=\sum_{n\in\mathbb Z} q^{(2n+1)^2/8},\qquad
\theta_3(\tau):=\sum_{n\in\mathbb Z} q^{n^2/2}, \qquad
\theta_4(\tau):=\sum_{n\in\mathbb Z} (-1)^nq^{n^2/2},
\end{gather*} which satisfy the relation
\begin{gather*}
\theta_3^4=\theta_2^4+\theta_4^4.
\end{gather*}
They can be expressed in terms of the Dedekind eta function as
\begin{gather}\label{eq:theta-eta}\theta_2(\tau)=2\frac{\eta(2\tau)^2}{\eta(\tau)}, \qquad \theta_3(\tau)=\frac{\eta(\tau)^5}{\eta(\tau/2)^2\eta(2\tau)^2}, \qquad \theta_4(\tau)=\frac{\eta(\tau/2)^2}{\eta(\tau)}. \end{gather}
Then the modular $\l$-function is
\begin{gather*}
\lambda=\left(\frac{\theta_2}{\theta_3}\right)^4=\(\frac{\sqrt{2}\eta(\tau/2){\eta(2\tau)^2}}{\eta(\tau)^3}\)^8
=16\big(q^{1/2}-8q+44q^{3/2}-192q^2+718q^{5/2}+\cdots\big),
\end{gather*}
and
\begin{gather}\label{eq:1-lambda-eta}
1-\l =\left(\frac{\theta_4}{\theta_3}\right)^4=\(\frac{\eta(\tau/2)^2\eta(2\tau)}{\eta(\tau)^3}\)^8.
\end{gather} The lambda function $\l(\tau)$ generates the field of all {meromorphic} weight-0 modular forms for~$\G(2)$, the principal level-2 congruence subgroup. The group~$\G(2)$ has 3 cusps, 0, 1, ${\rm i}\infty$ at which the values of $\l$ are $1$, $\infty$, $0$ respectively.

Another relevant classical result is (see Borweins \cite{PiandAGM})
\begin{gather}\label{eq:theta3-2F1}
\theta_3^2=\pFq{2}{1}{\frac 12&\frac 12}{&1}{\frac{\theta_2^4}{\theta_3^4}}.
\end{gather} Thus
\begin{gather}\label{eq:theta4}
\theta_4^2=(1-\l)^{1/2}\pFq{2}{1}{\frac 12&\frac 12}{&1}{\l}.
\end{gather}

From \cite[Proposition 7]{Zagier}, the logarithmic derivative of the discriminant modular form $\Delta(\tau)=\eta(\tau)^{24}$ is
\begin{gather*}
\frac{1}{2\pi {\rm i}} \frac{{\rm d}}{{\rm d}\tau} \log \Delta(\tau)=E_2(\tau),\end{gather*} where \begin{gather*}E_2(\tau):=1-24\sum_{n=1}^\infty \frac{nq^n}{1-q^n}\end{gather*}
is the weight-2 holomorphic quasi-modular form for ${\rm SL}_2(\mathbb Z)$.

\begin{Lemma}\label{lem:lambda-diff}
\begin{gather}\label{eq:lambda-diff}
\l'(\tau)=\frac{{\rm d}\l(\tau)}{{\rm d}\tau}=2\pi {\rm i} \cdot 8\cdot\frac{\eta(\tau/2)^{16}\eta(2\tau)^{16}}{\eta(\tau)^{28}}=\pi {\rm i} \cdot \l(\tau)\cdot \theta_4^4(\tau).
\end{gather}
\end{Lemma}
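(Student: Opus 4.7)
The plan is to prove both equalities by taking the logarithmic derivative of $\lambda$ in its eta-quotient form. Raising the eta expression for $\lambda$ given just before \eqref{eq:1-lambda-eta} to the appropriate power yields
\begin{equation*}
\lambda(\tau) = 16\,\frac{\eta(\tau/2)^8\,\eta(2\tau)^{16}}{\eta(\tau)^{24}},
\end{equation*}
while \eqref{eq:theta-eta} gives $\theta_4(\tau)^4 = \eta(\tau/2)^8/\eta(\tau)^4$. The product of these two expressions is $16\,\eta(\tau/2)^{16}\eta(2\tau)^{16}/\eta(\tau)^{28}$, so multiplying the right-hand side of the second claimed equality by $2\pi i$ recovers the right-hand side of the first. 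Thus the two equalities in \eqref{eq:lambda-diff} are equivalent, and it suffices to prove the single relation $\lambda'(\tau)/\lambda(\tau) = \pi i\,\theta_4(\tau)^4$.

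Next I would take the logarithmic derivative of the eta-quotient expression for $\lambda$. Using $\eta'(\tau)/\eta(\tau) = (\pi i/12)\,E_2(\tau)$ (a reformulation of the identity $(2\pi i)^{-1}\tfrac{d}{d\tau}\log\Delta = E_2$ recorded just before the lemma) together with the chain rule applied to $\eta(\tau/2)$ and $\eta(2\tau)$, this gives
\begin{equation*}
\frac{\lambda'(\tau)}{\lambda(\tau)} = \frac{\pi i}{3}\bigl[E_2(\tau/2) + 8 E_2(2\tau) - 6 E_2(\tau)\bigr],
\end{equation*}
so the lemma reduces to the auxiliary identity $E_2(\tau/2) + 8 E_2(2\tau) - 6 E_2(\tau) = 3\,\theta_4(\tau)^4$.

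The main obstacle is this auxiliary identity, which I would verify by modularity. For $\gamma \in \Gamma(2)$ the evenness of the off-diagonal entries lets one conjugate $\gamma$ into elements of $SL_2(\mathbb Z)$ acting on $\tau/2$ and on $2\tau$; applying the quasi-modular transformation $E_2(\sigma\tau) = (c\tau+d)^2 E_2(\tau) - 6ic(c\tau+d)/\pi$ to each of the three resulting terms shows that the anomalous pieces in $E_2(\gamma\tau/2)$, $E_2(\gamma\tau)$ and $E_2(2\gamma\tau)$ carry factors $2$, $1$ and $1/2$ of $-6ic(c\tau+d)/\pi$, and combine with the weights $1$, $-6$, $8$ as $2 - 6 + 4 = 0$. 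Hence the left-hand side is a genuine weight-$2$ holomorphic modular form on $\Gamma(2)$, as is $3\theta_4^4$. Since $X(2)$ has genus $0$ one has $S_2(\Gamma(2))=0$ and $\dim M_2(\Gamma(2)) = 2$, so the identity is settled by comparing the constant terms of both sides at the three cusps of $\Gamma(2)$ (those at $i\infty$ are both equal to $3$), which is a short explicit calculation.
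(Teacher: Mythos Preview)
Your argument is correct. Both you and the paper take the logarithmic derivative of $\lambda$ in its eta-quotient form and arrive at exactly the same reduction
\[
\frac{\lambda'}{\lambda}(\tau)=\frac{\pi i}{3}\bigl(E_2(\tau/2)+8E_2(2\tau)-6E_2(\tau)\bigr),
\]
so the only difference is how the auxiliary identity $E_2(\tau/2)+8E_2(2\tau)-6E_2(\tau)=3\theta_4^4$ is established. The paper quotes two ready-made formulas from Borwein--Borwein--Garvan, namely $3\theta_3^4=4E_2(2\tau)-E_2(\tau/2)$ and $\theta_3^4+\theta_2^4=2E_2(\tau)-E_2(\tau/2)$, and then combines them algebraically with $\theta_4^4=\theta_3^4-\theta_2^4$. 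You instead argue intrinsically: the coefficients $1,-6,8$ are chosen so that the quasi-modular defects (with relative weights $2,1,\tfrac12$ for the arguments $\tau/2,\tau,2\tau$) cancel, hence the left side is an honest element of $M_2(\Gamma(2))$; since $\dim M_2(\Gamma(2))=2$ and $S_2(\Gamma(2))=0$, equality follows from matching constant terms at the cusps. Your route is more self-contained (no external theta--$E_2$ identities needed) and explains \emph{why} that particular linear combination works, while the paper's route is shorter once the Borwein identities are granted. Both are perfectly valid; in fact your cancellation $2-6+4=0$ is exactly the mechanism hiding behind the two cited formulas.
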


\begin{proof}From Theorem 2.2(c) and Theorem 2.6(c) in \cite{JacobiAGM} by Borwein brothers and Garvin, one has the following two expressions of Jacobi theta functions in terms of~$E_2$:
\begin{gather*}
\theta_3^4(\tau)=\frac{4E_2(2\tau)-E_2(\tau/2)}{3},\qquad \theta_3^4(\tau)+\theta_2^4(\tau)=2 E_2(\tau)-E_2(\tau/2).\end{gather*}
Taking logarithmic derivative of $\l$ gives
\begin{align*}
\frac{\l'}{\l}(\tau)&=4\cdot \frac{2\pi {\rm i}}{24} \(E_2(\tau/2)+8E_2(2\tau)-6E_2(\tau)\)\\
&=\frac{2\pi {\rm i}}3\(\frac{3}2(E_2(\tau/2)-2E_2(\tau))+\( 4E_2(2\tau)-E_2(\tau/2) \)\)\\
&=\frac{2\pi {\rm i}}3\(3\theta_3^4-\frac{3}2\(\theta_2^4+ \theta_3^4\)\)(\tau)=\pi {\rm i} \theta_4^4(\tau).\tag*{\qed}
\end{align*}\renewcommand{\qed}{}
\end{proof}

Similarly, there are also weight-1 cubic theta functions as follows (see \cite{JacobiAGM})
\begin{gather*}
a(\tau):=\sum_{(n,m)\in \Z^2}q^{n^2+nm+m^2}=\frac{3\eta(3\tau)^3+\eta(\tau/3)^3}{\eta(\tau)},\\
b(\tau):=\sum_{(n,m)\in \Z^2}\zeta_3^{m-n}q^{n^2+nm+m^2}=\frac{\eta(\tau)^3}{\eta(3\tau)},\\
c(\tau):=\sum_{(n,m)\in \Z^2}q^{\(n+1/3\)^2+\(n+1/3\)\(m+1/3\)+\(m+1/3\)^2}=3\frac{\eta(3\tau)^3}{\eta(\tau)},
\end{gather*} where $\zeta_3={\rm e}^{2\pi {\rm i}/3}$. They satisfy the cubic relation
\begin{gather}\label{eq:cubic}
a^3=b^3+c^3.
\end{gather}
Parallel to \eqref{eq:theta3-2F1} is the following identity (see \cite{JacobiAGM})
\begin{gather}\label{eq:a-2F1}
\pFq{2}{1}{\frac 13&\frac 23}{&1}{\frac{c^3}{a^3}}=a.
\end{gather}

A finite index subgroup $\G$ of {${\rm SL}_2(\mathbb Z)$} acts on the upper half plane $ \mathfrak H$ via {fractional linear transformations}. Its fundamental domain can be compactified by adding a few missing points, called the cusps, to get the compact modular curve $X_\G$ for $\G$. The meromorphic modular functions for $\G$ form a field and we will denote it by $\C(X_{\G})$. If $X_\G$ has genus 0, $\C(X_{\G})$ has a~generator~$t$ over~$\C$ which plays a crucial role. For example, its derivative $t'=\frac{{\rm d}t}{{\rm d}\tau}$ is a weight-2 meromorphic modular form for~$\G$. When $t=\l(\tau)$, the explicit form of $t'$ is given in Lemma~\ref{lem:lambda-diff}. When $-I\notin \G$, by the Galois theory in the context of modular curves, a finite extension of~$\C(X_{\G})$ corresponds to a unique finite index subgroup $\G'$ of $\G$. Note that $\C(X_{\G'})$ is a simple field extension of~$\C(X_{\G})$. The ramifications of a generator can occur either along the cusps or along the elliptic points with specified degrees. Nevertheless, finding such a generator is equivalent to determining the group $\G'$. In some cases below, we start with a genus~0 group~$\G$ with~$\C(X_{\G})$ generated by~$t$ and compute an algebraic function on~$\G$ which leads to a genus~1 subgroup~$\G'$. In this case the invariant differential for $X_{\G'}$ naturally corresponds to the unique normalized weight-2 cusp form~$f$ for~$\G'$. This process may lead to an expression of $f(\tau){\rm d}\tau$ as an algebraic function~$R(t)$ times~${\rm d}t$. Combined with formula~\eqref{eq:rn}, $L(f,1)$ can be computed using \begin{gather*}\int_{t(0)}^{t({\rm i}\infty)} R(t){\rm d}t.\end{gather*} When $X_{\G'}$ has CM by an imaginary quadratic field~$K$, by the Chowla--Selberg formula~\eqref{eq:bk}, it is expected that $L(f,1)$ is an algebraic multiple of~$b_K$. This approach is closely related to~\cite{RWZ} by Rogers, Wan, and Zucker where they used elliptic integrals instead of modular forms.

We illustrate the above idea by the following example. The cubic equation \eqref{eq:cubic} leads to one natural way to parametrize the degree-3 Fermat curve $X^3+Y^3=1$, which is isomorphic to the modular curve~$X_0(27)$. Here we use the standard notation that $X_0(N)$ is the modular curve for \begin{gather*}\G_0(N)= \left \{\begin{pmatrix}a&b\\c&d\end{pmatrix}\in {{\rm SL}_2(\mathbb Z)}\colon c\equiv 0\mod N \right\}.\end{gather*} The genus 0 congruence subgroup~$\G_0(9)$ has 4 cusps: 0, 1/3, 2/3 and ${\rm i}\infty$. The modular curve~$X_0(27)$ is a 3-fold cover of $X_0(9)$. The cusps of $\G_0(9)$ and their behaviors in $\G_0(27)$ are summarized below:
\begin{gather*}
\arraycolsep=4.7pt\def\arraystretch{1.7}
\begin{array}{c||c|c|c|c}
\G_0(9)& {\rm i}\infty& 0& \frac 13& \frac 23 \\ \hline
\G_0(27)&{\rm i}\infty, \frac19, \frac 29& 0& \frac 13&\frac 23
\end{array}
\end{gather*}
This means that, as a cover of $X_0(9)$, $X_0(27)$ is totally ramified at the cusps $0$, $\frac 13$, $\frac 23$ and splits completely at $\infty$. In fact, the modular function
\begin{gather*}
\frac{a(3\tau)}{c(3\tau)}=\frac 13 \frac{{\eta(\tau)^3}}{\eta(9\tau)^3}+1=\frac13\big( q^{-1}+5q^2- 7q^5+3q^8+15 q^{11}+\cdots \big)
\end{gather*} is a generator of the field of modular functions for~$\G_0(9)$. It has a simple pole at the cusp~$\infty$, and takes values~$1$, $\zeta_3$, $\zeta_3^2$ at the cusps $0$, $1/3$, $2/3$, respectively. Then $X(\tau)=c(3\tau)/a(3\tau)$ is also a modular function of $X_0(27)$ and $\sqrt[3]{1-X(\tau)^3}$ matches exactly the ramification information of the covering map $X_0(27) \to X_0(9)$. Therefore, $Y(\tau)=\sqrt[3]{1-X(\tau)^3}$ is a modular function for $\G_0(27)$ which also generates the cubic extension $\C(X_0(27))/\C(X_0(9))$. As $X^3+Y^3=1$, this is a natural realization of the degree-3 Fermat group alluded to above. Therefore, up to scalar, ${\rm d}X/Y^2$ is the unique holomorphic differential $1$-form on the curve~$X_0(27)$. On the other hand, the Hecke eigenform $f_{27}(\tau):=\eta(3\tau)^2\eta(9\tau)^2$ defines a holomorphic differential $1$-form, $f_{27}(\tau){\rm d}\tau$, on~$X_0(27)$. Comparing the Fourier expansions of these two differential forms ${\rm d}X/Y^2$ and $f_{27}(\tau){\rm d}\tau$, one has
\begin{gather*}
\frac{{\rm d}X}{Y^2}(q)=3f_{27}(q)\frac{{\rm d}q}q.
\end{gather*}
Or equivalently,
\begin{gather*}
\frac{{\rm d}X}{Y^2}(\tau)=2\pi {\rm i} \cdot 3f_{27}(\tau){{\rm d}\tau}.
\end{gather*}

Set $s(\tau)=X(\tau/3)=c(\tau)/a(\tau)$ and $t(\tau)=y(\tau/3)=b(\tau)/a(\tau)$. Using the $a$, $b$, $c$-notation above, we observe that $t^3=1-s^3$ and
\begin{gather*}
f_{27}\(\tau/3\) = \eta(\tau)^2\eta(3\tau)^2=\frac 13 b(\tau)c(\tau)=\frac13s(\tau)t(\tau)a(\tau)^2.
\end{gather*}
It follows that
\begin{gather*}
\frac{{\rm d}s}{t^2}(\tau)=2\pi {\rm i} \cdot f_{27}(\tau/3){{\rm d}\tau}.
\end{gather*}
In other words,
\begin{gather}\label{eq:ds} \frac{{\rm d}s}{{\rm d}\tau}= 2\pi {\rm i} \cdot s\big(1-s^3\big)a^2/3.\end{gather}

\subsection{Id\`ele class characters and modular forms}\label{ss:idele}
Given a number field $K$, denote by $\Sigma(K)$ the set of places of $K$, and for each $v \in \Sigma(K)$, let~$K_v$ denote the completion of $K$ at $v$. When $v$ is a nonarchimedean place, let $\mathcal O_v$ be the ring of integers of $K_v$, $\mathcal M_v = \pi_v \mathcal O_v$ its unique maximal ideal, and $\mathcal U_v$ the group of units in $\mathcal O_v$. The group of id\`eles $I_K$ of $K$ is the restricted product $\mathop{\prod'}\limits_{v \in \Sigma(K)} K_v^\times$ with respect to $\mathcal U_v$ at the nonarchimedean places $v$ of $K$. It is a locally compact topological group in which the multiplicative group $K^\times$ is diagonally embedded.

A character $\xi$ of $I_K$ is a continuous homomorphism from $I_K$ to $\C^\times$. Denote by $\xi_v$ its restriction to $K_v^\times$. Then we can write $\xi = \prod\limits_{v \in \Sigma(K)} \xi_v$. Furthermore, owing to the topology on~$I_K$, one can show that, for almost all nonarchimedean places $v$ of $K$, the character $\xi_v$ is trivial on $\mathcal U_v$, hence it is determined by its value at any uniformizer $\pi_v$ at $v$ since $K_v^\times = \mathcal U_v \times \langle \pi_v \rangle$. In this case we say that $\xi$ and $\xi_v$ are unramified at~$v$. At a nonarchimedean place $v$ where $\xi_v$ is not trivial on~$\mathcal U_v$, again by the topology on $K_v^\times$, one can show that there is a smallest positive integer $\mathfrak f_v$ so that~$\xi_v$ is trivial on the subgroup $1 + \mathcal M_v^{\mathfrak f_v}$ of $\mathcal U_v$. In this case we say that $\xi$ and $\xi_v$ are ramified at~$v$ and the product of $\mathcal M_v^{\mathfrak f_v}$ over ramified places~$v$ is called the {\it conductor} of~$\xi$.

When a character $\xi$ of $I_K$ is trivial on $K^\times$, it is called an {\it id\`ele class character} of~$K$. The weak approximation theorem (cf.~\cite[p.~117]{Neu}) implies that an id\`ele class character $\xi = \prod_v \xi_v$ with a~given conductor is determined by the local components $\xi_v$ for all but finitely many places~$v$.

The nonarchimedean places $v$ of $K$ are in one-to-one correspondence with the maximal ideals~$\mathfrak P_v$ of the ring of integers of~$K$. Given an id\`ele class character $\xi=\prod\limits_{v \in \Sigma(K)} \xi_v$, the formula
\begin{gather*}\xi'(\mathfrak P_v) = \xi_v(\pi_v)
\end{gather*}
defines a character ${\xi'}$ on the free abelian group generated by all $\mathfrak P_v$ with $\xi_v$ unramified, in other words, the group of fractional ideals of $K$ coprime to the conductor of~$\xi$. In the literature ${\xi'}$ is called a \emph{Hecke Grossencharacter}. Upon checking the behavior of~${\xi'}$ on the integral principal ideals, one finds that ${\xi'}$ has the same conductor as $\xi$. Conversely, given a Hecke Grossencha\-rac\-ter~${\xi'}$, the above formula defines an unramified character $\xi_v$ of $K_v^\times$ for all but finitely many places $v$ of $K$, which can be uniquely extended to an id\`ele class character $\xi = \prod_v \xi_v$ with the same conductor as ${\xi'}$ by the weak approximation theorem. So the two kinds of characters are the same. The reader is referred to Section~6, Chapter~VII of the book \cite{Neu} by Neukirch for more detail.

A typical example of an unramified id\`ele class character is $\xi = |~|_K= \prod\limits_{v \in \Sigma(K)} |~|_v$, the absolute value of~$I_K$. Here $|~|_v$ is the standard valuation at $v$. More precisely, if $v$ is a real place, it is the usual absolute value on~$\mathbb R$; if $v$ is a complex place, it is the square of the usual absolute value on $\C$; if $v$ is a nonarchimedean place, it is given by
\begin{gather*} |\pi_v|_v = \frac{1}{Nv}\qquad {\rm and} \qquad |u_v|_v = 1 \qquad {\rm for} \ \ u_v \in \mathcal U_v\end{gather*}
with $Nv$ the cardinality of the residue field~$\mathcal O_v/\mathcal M_v$.

\begin{Remark}\label{rem:2.1}In \cite[Chapter~5, Proposition~1]{Li96} it is shown that any character $\xi$ of~$I_K$ can be written as the product of $|~|_K^{s'}$ for some complex number $s'$ times a unitary id\`ele class charac\-ter~$\xi_1$ of $I_K$ which takes values in the unit circle $S^1 \subset \C^\times$.
\end{Remark}

To an id\`ele class character $\xi$ of $K$, we associate an $L$-function defined as
\begin{gather}\label{eq:L-idele} L(\xi, s) = \prod_{v~{\rm nonarchimedean}\atop{\xi_v ~ {\rm unramified}}} \frac{1}{1 - \xi_v(\pi_v)(Nv)^{-s}}.\end{gather}
Note that the $L$-function attached to the trivial character is nothing but the Dedekind zeta function of $K$, which converges absolutely for $\Re (s) > 1$. The same holds for $\xi$ unitary. In general, by Remark {\ref{rem:2.1}}, we can write $\xi = |~|_K^{s'}\xi_1$ with $s' \in \C$ and $\xi_1$ a unitary id\`ele class character. Since $L(\xi, s) = L(\xi_1, s+s')$, we conclude that the $L$-function attached to $\xi$ converges absolutely to a holomorphic function on the right half-plane $\Re(s) > 1 - \Re(s')$. It suffices to understand the analytic behavior of $L$-functions attached to unitary id\`ele class characters of~$K$. This was studied by Hecke for Grossencharacters. Hecke's result was reproved in Tate's thesis~\cite{Tate} using adelic language, summarized below.

\begin{Theorem} Let $K$ be a number field with different $\mathfrak d$. Let $\xi$ be a unitary id\`ele class character of $K$ with conductor $\mathfrak f$. The associated $L$-function $L(\xi, s)$ defined above is holomorphic on $\Re(s) > 1$. It can be analytically continued to a meromorphic function on the whole $s$-plane, bounded at infinity in each vertical strip of finite width, and holomorphic everywhere except for a simple pole at $s=1$ when $\xi$ is the trivial character. Further, there is a suitable $\Gamma$-product $L_\infty(\xi_\infty, s)$, depending on $\xi_v$ at the archimedean places $v$ of $K$, such that
\begin{gather*} \Lambda(\xi, s) := L_\infty(\xi_\infty, s)L(\xi, s)\end{gather*}
satisfies the functional equation
\begin{gather}\label{eq:feq-idele} \Lambda(\xi, s) = W(\xi)N_{K/\Q}(\mathfrak {df})^{\frac12 - s}\Lambda\big(\xi^{-1}, 1-s\big),
\end{gather}
where $W(\xi)$ is a constant of absolute value $1$ and $\xi^{-1}$ is the inverse of $\xi$.
\end{Theorem}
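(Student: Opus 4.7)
The plan is to follow Tate's thesis, which recasts the theorem adelically and derives everything from Poisson summation on the adele ring $\mathbb A_K$ of $K$. For the unitary id\`ele class character $\xi = \prod_v \xi_v$ and a Schwartz--Bruhat function $f = \prod_v f_v$ on $\mathbb A_K$, I would form the global zeta integral
\begin{gather*}
Z(f, \xi, s) = \int_{I_K} f(x)\,\xi(x)\,|x|_K^s \, {\rm d}^\times x,
\end{gather*}
which, once absolute convergence is established for $\Re(s)$ sufficiently large, factors by Fubini as $\prod_v Z_v(f_v, \xi_v, s)$ with $Z_v(f_v, \xi_v, s) = \int_{K_v^\times} f_v(x)\,\xi_v(x)\,|x|_v^s \, {\rm d}^\times x$.

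The first step is the local analysis. At each nonarchimedean $v$ where $\xi_v$ is unramified, choose $f_v$ to be the characteristic function of $\mathcal O_v$; a direct geometric series computation gives $Z_v = (1 - \xi_v(\pi_v)(Nv)^{-s})^{-1}$, so by \eqref{eq:L-idele} these factors multiply to $L(\xi, s)$. At each archimedean place, take $f_v$ to be a suitable Gaussian-type function so that the resulting $Z_v$ is a product of $\Gamma$-factors, which aggregate to the required $L_\infty(\xi_\infty, s)$. At each ramified nonarchimedean $v$, pick $f_v$ supported on a coset of a small subgroup of $\mathcal U_v$ adapted to the conductor $\mathfrak f_v$; then $Z_v$ is a finite Gauss-type sum. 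Convergence of each local integral together with uniform domination (exploiting that $\xi_v$ is unitary) yields absolute convergence, hence holomorphy of $L(\xi,s)$, on $\Re(s) > 1$.

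The second step is the global functional equation, the heart of the proof. Restricting to the compact quotient $K^\times \backslash I_K^1$ (where $I_K^1$ denotes norm-one id\`eles) and splitting $Z(f, \xi, s)$ according to whether $|x|_K \geq 1$ or $|x|_K \leq 1$, apply the Poisson summation formula
\begin{gather*}
\sum_{\gamma \in K} f(\gamma x) = |x|_K^{-1} \sum_{\gamma \in K} \hat f(\gamma/x)
\end{gather*}
on the piece with $|x|_K \leq 1$, then change variable $x \mapsto x^{-1}$ to match with the analogous integral for $Z(\hat f, \xi^{-1}, 1-s)$. This yields
\begin{gather*}
Z(f, \xi, s) = Z\bigl(\hat f, \xi^{-1}, 1-s\bigr) + (\text{boundary terms}),
\end{gather*}
where the boundary terms vanish when $\xi$ is nontrivial on $I_K^1$ and otherwise contribute the simple pole at $s=1$. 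The factor $N_{K/\Q}(\mathfrak{df})^{\frac12 - s}$ emerges because the self-dual additive character used to form $\hat f$ has conductor $\mathfrak d^{-1}$, while the chosen $f_v$ at ramified places produces $N(\mathfrak f)$-dependence under Fourier transform.

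The main obstacle is the careful bookkeeping of local epsilon factors: computing $\hat f_v$ at ramified finite and archimedean places, then forming the ratios $Z_v(\hat f_v, \xi_v^{-1}, 1-s)/Z_v(f_v, \xi_v, s)$, should yield a product of local $\varepsilon$-factors whose global product is $W(\xi)$. Verifying $|W(\xi)| = 1$ requires exploiting the unitarity $\overline{\xi_v} = \xi_v^{-1}$ and a conjugation symmetry in the Gauss-type sums. Finally, boundedness in vertical strips follows from standard decay estimates on Schwartz--Bruhat functions and their Fourier transforms, combined with a Phragm\'en--Lindel\"of argument applied to the functional equation.
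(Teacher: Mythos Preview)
The paper does not supply its own proof of this theorem; it states the result as classical, attributes it to Hecke and to Tate's thesis, and refers the reader to \cite{Tate} for details. Your proposal is a faithful outline of Tate's adelic argument---global zeta integrals, local factorization, Poisson summation for the functional equation, and local $\varepsilon$-factor bookkeeping for the root number---so it is both correct and in agreement with the source the paper cites.
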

Here $W(\xi)$, called the root number of $\xi$, is equal to the Gauss sum of $\xi$ divided by its absolute value. See \cite{Tate} for details.

In particular, when $K$ is an imaginary quadratic extension of $\Q$, it has one infinite place $\infty$ with $K_\infty = \C$. The $\Gamma$-factor $L_\infty(\xi_\infty, s)$ is equal to $(2\pi)^{-s}\Gamma(s)$ for all unitary $\xi$. Such a charac\-ter~$\xi$ is said to have \emph{algebraic type} $k$ if $\xi_\infty$ maps $z \in \C^\times$ to $\xi_\infty(z) = (z/|z|)^n$ with $|n| = k-1$. The above theorem {for} $\xi$ algebraic of type $k \ge 1$ combined with the converse theorem for modular forms proved by Weil~\cite{Weil} implies the existence of a modular form $f_{\xi} = \sum\limits_{n \ge 0} a_n q^n$ of weight $k$ such that the associated $L$-function $L(f_\xi, s) := \sum\limits_{n \ge 1} a_n n^{-s}$ satisfies the relation
\begin{gather*} L(f_\xi, s)= L\(\xi, s-\frac{k-1}{2}\).
\end{gather*}
The form $f_\xi$ is cuspidal if $\xi$ is nontrivial. See \cite[Chapter~7, Section~4]{Li96} for details. Observe that, since the $L$-function attached to $f$ is Eulerian at all primes, the form $f$ is a Hecke eigenfunction.

We end this subsection by noting that for an elliptic curve $E$ defined over $\Q$ of conductor $N$ with CM by $K$, it was known to Deuring that the Hasse--Weil $L$-function $L(E, s)$ attached to~$E$ obtained by counting $\mathbb F_p$-rational points on the reduction of~$E$ modulo the prime $p$ is equal to $L\big(|~|_K^{-1/2}\xi, s\big)= L\big(\xi, s -\frac 12\big)$ for some nontrivial unitary id\`ele class character~$\xi$ of $K$, algebraic of type $2$. The above discussion says that all positive powers of $\xi$ also correspond to modular forms.

\subsection{Eisenstein series}\label{ss:2.5} We now recall some useful facts of Eisenstein series of general level.
Define the level $N$ holomorphic Eisenstein series as follows. For details see \cite{Cohen-stromberg, Diamond-Shurman}. For a fixed pair of integers $(a_1,a_2)$, for $k\ge 3$, we let
\begin{gather*}
\mathcal G_{k,(a_1,a_2;N)}(\tau)=\sum_{(m,n)\in \Z^2\atop (m,n)\equiv (a_1,a_2) \mod N}\frac1{(m\tau+ n)^k}.
\end{gather*}
The series is a weight $k$ holomorphic modular form on
$\G(N)=\left\{\gamma\in {{\rm SL}_2(\mathbb Z)}\colon \gamma\equiv I_2 \mod N\right\}.
$ When $N\ge 3$, $\G(N)$ does not contain $-I_2$. For $k=1$ or $2$, the series does not converge absolutely so we adopt the following approach using the more general non-holomorphic Eisenstein series
\begin{gather*}
\mathcal G_{k,(a_1,a_2;N)}^\ast(s,\tau)=\sum_{(m,n)\in \Z^2\atop (m,n)\equiv (a_1,a_2) \mod N}\frac1{(m\tau+ n)^k}\frac{\Im(\tau)^s}{|m\tau+ n|^{2s}}.
\end{gather*} We now recall some basic properties of this function. For details see \cite[Proposition~5.2.2]{Cohen-stromberg} by Cohen and Str\"omberg. Firstly the series $\mathcal G_{k,(a_1,a_2;N)}^\ast(s,\tau)$ converges absolutely and uniformly on any compact subset of the upper half complex plane when $\operatorname{Re}(2s+k)>2$ thus it is continuous at $s=0$ when $k\ge 3$. Also for a fixed $\tau$, there exists a meromorphic continuation of $\mathcal G_{k,(a_1,a_2;N)}^\ast(s,\tau)$ to the whole $s$-plane which is parallel to the Fourier series stated in Proposition~\ref{prop:Lip} below. In our later application, we are interested in the following series
\begin{gather*}
\mathcal G^\ast_{k,(a_1,a_2;N)}(\tau):= \mathcal G_{k,(a_1,a_2;N)}^\ast(0,\tau)=\lim_{\operatorname{Re}(s)>0\atop s\rightarrow 0} \mathcal G_{k,(a_1,a_2;N)}^\ast(s,\tau).
\end{gather*}
Thus when $k\ge 3$, the series $\mathcal G^\ast_{k,(a_1,a_2;N)}(\tau)$ and $\mathcal G_{k,(a_1,a_2;N)}(\tau)$ coincide.
For integers $a \ge 0$, $N\ge 1$ and $k\ge 1$, we can define the series
\begin{gather*}
\mathcal G_{k,(a;N)}^\ast(\tau):= \sum_{i=0}^{N-1} \mathcal G_{k,(a, i;N)}^\ast(\tau) =\sum_{m,n\in\Z}\frac{1}{\((Nm+a)\tau+n\)^k}
\end{gather*}
formally. To give the meromorphic continuation of $\mathcal G_{k,(a_1,a_2;N)}^\ast(s,\tau)$ and obtain the Fourier expansions of $\mathcal G_{1,(a;N)}^\ast$ and $\mathcal G_{2,(a;N)}^\ast$, we use the following {Lipschitz} summation formula. See \cite[Section~3.5]{Cohen-stromberg} by Cohen and Str\"omberg, \cite[Theorem 10.4.3]{Cohen-ant} by Cohen, or \cite[Section 5.3]{Knopp} by Knopp for details.

\begin{Proposition}[{\cite[Corollary 3.5.7(a)]{Cohen-stromberg}}]\label{prop:Lip} For {$\tau\in \mathfrak H$,} $k\in \Z_{\ge 0}$, ${\Re}(s)>(1-k)/2$, we have
\begin{gather*}
\G(s+k)\sum_{n\in \Z}\frac{1}{(\tau+n)^k| \tau+n|^{2s}}=(-{\rm i})^k\sqrt \pi \frac{\G(s+(k-1)/2)\G(s+k/2)}{\G(s)}\Im(\tau)^{1-2s-k}\\
\hphantom{\G(s+k)\sum_{n\in \Z}\frac{1}{(\tau+n)^k| \tau+n|^{2s}}=}{} +(-2\pi)^k2^{s+1/2}\pi^{2s-1/2}\! \sum_{n\neq 0} \sign (n)^k|n|^{2s+k-1}W_k(2\pi n\tau,s),
\end{gather*}
where $W_k(z;s)$ is defined inductively as follows:
\begin{gather*}
W_0(z;s)=|{\Im}(z)|^{1/2-s}{\rm e}^{{\rm i}\operatorname{Re}(z)}K_{s-1/2}(|\Im(z)|),
\end{gather*}
with
\begin{gather*}
K_a(x)=\frac 12\int_0^\infty t^{a-1}{\rm e}^{-\frac x2(t+1/t)}{\rm d}t \qquad \text{for} \ x>0,
\end{gather*} being a $K$-Bessel function {\rm \cite[Definition~3.2.8]{Cohen-stromberg}} and for $k\ge 1$, $z\in \C$ with $\Im(z)\neq 0$,
\begin{gather*}
W_k(z;s)=\frac{\partial W_{k-1}(z;s)}{\partial z}.
\end{gather*}
\end{Proposition}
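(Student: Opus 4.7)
The identity is Poisson summation for the function $f_{k,s}(x) = (\tau+x)^{-k}|\tau+x|^{-2s}$ on $\mathbb R$, made explicit via $K$-Bessel Fourier transforms. The plan is to verify the case $k=0$ by a direct computation and then induct on $k$ by differentiating in $\tau$, which is natural because the functions $W_k$ are themselves defined through the recursion $W_k = \partial W_{k-1}/\partial z$.

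First, for the case $k=0$, write $\tau = u + {\rm i}v$ with $v>0$ and apply Poisson summation to $g_s(x) = |\tau+x|^{-2s}$ in the region $\Re(s) > 1/2$, where the sum converges absolutely. After shifting by $u$, one has $\widehat{g_s}(m) = {\rm e}^{2\pi {\rm i} mu} \int_\R (t^2+v^2)^{-s} {\rm e}^{-2\pi {\rm i} mt}\,{\rm d}t$. The $m=0$ term gives $\sqrt{\pi}\,\G(s-1/2)v^{1-2s}/\G(s)$, accounting for the $\Im(\tau)^{1-2s-k}$ term on the right-hand side. For $m\neq 0$, I would insert the standard Gamma representation $(t^2+v^2)^{-s} = \G(s)^{-1}\int_0^\infty r^{s-1}{\rm e}^{-r(t^2+v^2)}\,{\rm d}r$, perform the Gaussian integration in $t$, and recognize the resulting integral in $r$ as the classical Mellin representation of $K_{s-1/2}(2\pi |m|v)$. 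Collecting constants and rewriting using the definition of $W_0$ (which packages the factors ${\rm e}^{2\pi {\rm i} mu}K_{s-1/2}$ and $(2\pi|m|v)^{1/2-s}$) produces the formula at level $k=0$ with the stated coefficient $2^{s+1/2}\pi^{2s-1/2}$.

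For the inductive step $k \mapsto k+1$, use the branch $|\tau+n|^{-2s} = (\tau+n)^{-s}(\overline{\tau+n})^{-s}$ on $\mathfrak H$. Since $\overline{\tau+n}$ is anti-holomorphic in $\tau$,
$$\partial_\tau\bigl[(\tau+n)^{-k-s}(\overline{\tau+n})^{-s}\bigr] = -(k+s)(\tau+n)^{-(k+1)}|\tau+n|^{-2s},$$
and together with $(k+s)\G(s+k) = \G(s+k+1)$ this shows that the level-$(k+1)$ left-hand side equals $-\partial_\tau$ of the level-$k$ left-hand side. On the right, $\partial_\tau v = 1/(2{\rm i})$, so the polynomial term acquires a factor ${\rm i}(2s+k-1)/2$; combined with the existing $(-{\rm i})^k$ this produces $(-{\rm i})^{k+1}$, and via $s + (k-1)/2 = (2s+k-1)/2$ it promotes $\G(s+(k-1)/2)$ to $\G(s+(k+1)/2)$. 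For the Bessel term the chain rule gives $\partial_\tau W_k(2\pi n\tau;s) = 2\pi n\, W_{k+1}(2\pi n\tau;s)$, and the factor $2\pi n = 2\pi\sign(n)|n|$ promotes $(-2\pi)^k\sign(n)^k|n|^{2s+k-1}$ to $(-2\pi)^{k+1}\sign(n)^{k+1}|n|^{2s+k}$ after absorbing the overall minus sign.

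The main obstacle is analytic continuation together with constant bookkeeping. Poisson summation is only initially valid when $\Re(s)$ is large enough for absolute convergence, so the derived identity must be adopted as the meromorphic continuation; one then verifies that the $K$-Bessel terms on the right decay exponentially in $|n|v$, ensuring convergence in the extended range $\Re(s) > (1-k)/2$. Matching the exact constants $(-{\rm i})^k$ and $2^{s+1/2}\pi^{2s-1/2}$ and the Gamma factors (via the duplication formula $\G(2z) = 2^{2z-1}\pi^{-1/2}\G(z)\G(z+1/2)$ to reconcile the two forms of the polynomial coefficient) through the induction is routine but delicate, and this is where most of the work lies.
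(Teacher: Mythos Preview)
The paper does not give its own proof of this proposition; it is quoted verbatim from Cohen--Str\"omberg (Corollary~3.5.7(a)), with the reader referred there and to Cohen's analytic number theory book and Knopp for details. Your outline---Poisson summation for $k=0$ together with the Gamma-integral computation of the Fourier transform of $(t^2+v^2)^{-s}$, followed by induction on $k$ via the Wirtinger derivative $\partial/\partial\tau$---is precisely the standard textbook argument used in those references, and it is correct in substance.

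One small slip in the bookkeeping you yourself flagged as delicate: since $\partial_\tau\Im(\tau)=1/(2{\rm i})=-{\rm i}/2$, applying $-\partial_\tau$ to $\Im(\tau)^{1-2s-k}$ produces the factor $-{\rm i}(2s+k-1)/2$, not $+{\rm i}(2s+k-1)/2$. This is exactly what is needed, because $(-{\rm i})\cdot(-{\rm i})^k=(-{\rm i})^{k+1}$, whereas ${\rm i}\cdot(-{\rm i})^k=-(-{\rm i})^{k+1}$. With that correction the induction closes as you describe.
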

\begin{Proposition}[{\cite[Lemma 3.5.6(b)]{Cohen-stromberg}}] When $k>0$, $s=0$, we have
\begin{gather*}
W_k(z;0)=
\begin{cases}
{\rm i}^k\sqrt{\dfrac{\pi}{2}}{\rm e}^{{\rm i}z}, &\mbox{if } \Im(z)>0,\\
0, &\mbox{if } \Im(z)<0.
\end{cases}
\end{gather*}
\end{Proposition}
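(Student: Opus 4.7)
The plan is to compute $W_0(z;0)$ explicitly and then propagate via the recursion $W_k(z;s)=\partial_z W_{k-1}(z;s)$, which gives $W_k(z;0)=\partial_z^{k}[W_0(z;0)]$.

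\textbf{Step 1 (base case).} At $s=0$ we have $K_{s-1/2}=K_{-1/2}=K_{1/2}$, and the classical half-integer Bessel identity
\begin{gather*}
K_{1/2}(x)=\sqrt{\frac{\pi}{2x}}\,{\rm e}^{-x},\qquad x>0,
\end{gather*}
follows from the defining integral via the substitution $u=\sqrt t$ (which converts $t+1/t$ into $u^2+1/u^2$) together with the Gaussian evaluation $\int_0^\infty {\rm e}^{-\alpha(u^2+1/u^2)}\,{\rm d}u=\frac{1}{2}\sqrt{\pi/\alpha}\,{\rm e}^{-2\alpha}$. Inserting this into the definition of $W_0$, the prefactor $|\Im(z)|^{1/2}$ cancels the $\sqrt{1/|\Im(z)|}$ coming from $K_{-1/2}$, and one obtains
\begin{gather*}
W_0(z;0)=\sqrt{\frac{\pi}{2}}\,{\rm e}^{{\rm i}\Re(z)-|\Im(z)|}
=\sqrt{\frac{\pi}{2}}\cdot\begin{cases}{\rm e}^{{\rm i}z}, & \Im(z)>0,\\ {\rm e}^{{\rm i}\bar z}, & \Im(z)<0.\end{cases}
\end{gather*}

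\textbf{Step 2 (induction on $k$).} Since $W_0(z;s)$ is jointly smooth in $(z,s)$ away from $\Im(z)=0$ (note that $K_{s-1/2}(x)$ is entire in $s$ for $x>0$), the substitution $s=0$ commutes with the Wirtinger derivative $\partial_z$, so $W_k(z;0)=\partial_z^{k}[W_0(z;0)]$ for $k\ge 1$. On the upper half-plane the expression ${\rm e}^{{\rm i}z}$ is holomorphic in $z$, and each application of $\partial_z$ produces a factor of ${\rm i}$, yielding $W_k(z;0)={\rm i}^k\sqrt{\pi/2}\,{\rm e}^{{\rm i}z}$. On the lower half-plane the expression ${\rm e}^{{\rm i}\bar z}$ is a function of $\bar z$ alone, hence annihilated already by a single application of $\partial_z$, so $W_k(z;0)=0$ for every $k\ge 1$.

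The only technical point is the interchange of $\partial_z^{k}$ with setting $s=0$, which is immediate from the explicit smoothness of $W_0(z;s)$ in both variables on $\{\Im(z)\ne 0\}$; everything else is Bessel-function bookkeeping and elementary complex differentiation, so no real obstacle is expected.
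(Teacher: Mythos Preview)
Your argument is correct. The computation of $W_0(z;0)$ via $K_{-1/2}=K_{1/2}=\sqrt{\pi/(2x)}\,{\rm e}^{-x}$ is right, the resulting expression $\sqrt{\pi/2}\,{\rm e}^{{\rm i}\Re(z)-|\Im(z)|}$ splits as $\sqrt{\pi/2}\,{\rm e}^{{\rm i}z}$ or $\sqrt{\pi/2}\,{\rm e}^{{\rm i}\bar z}$ according to the sign of $\Im(z)$, and applying $\partial_z^k$ then gives exactly the stated formula. Your justification for interchanging $\partial_z$ with evaluation at $s=0$ (smoothness of $W_0(z;s)$ jointly in $(z,s)$ on $\{\Im(z)\ne 0\}$) is adequate.

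As for comparison with the paper: the paper does not prove this proposition at all. It is quoted verbatim as \cite[Lemma~3.5.6(b)]{Cohen-stromberg} and used as a black box in the subsequent derivation of the Fourier expansions in Theorem~\ref{thm:G*}. So there is no ``paper's proof'' to compare against; you have supplied a clean self-contained argument where the authors simply cite the literature.
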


In addition, we have to deal with
\begin{gather*}
\sum_{m\equiv a \mod N} \frac 1{m^s}=\frac 1{N^s} \sum_{m\in \Z} \frac 1{\(m+\frac aN\)^s},
\end{gather*}
which is related to the Hurwitz zeta function
\begin{gather}\label{eq:Hzeta}
\zeta (x,s):=\sum _{{n=0}}^{\infty}{\frac {1}{(n+x)^{{s}}}}, \qquad \operatorname{Re}(s)>0, \qquad \operatorname{Re}(x)>0.
\end{gather}
The function $\zeta(x;s)$ has a simple pole at $s=1$ with residue $1$. And $\zeta(x;0)=1/2-x$ (see \cite[Section~9.6.1]{Cohen-ant} or \cite[Proposition~3.5.8]{Cohen-stromberg}).

The next result is about the Fourier expansion of the Eisenstein series \begin{gather*}
\mathcal G_{k,(a:N)}^\ast(s,\tau)=\sum_{m,n\in\Z}\frac{1}{\((Nm+a)\tau+n\)^k}\frac{\Im(\tau)^s}{|(Nm+a)\tau+n|^{2s}}, \qquad N,a\in \Z_{>0},
\end{gather*}when $s\rightarrow 0$.

\begin{Theorem}\label{thm:G*}We have the Fourier expansions
\begin{gather*}
\mathcal G_{1,(a;N)}^\ast(\tau)=- {\rm i}\pi\(1-\frac {2a}{N}\)-2\pi {\rm i}\sum_{n> 0}\frac{q^{na}- q^{n(N-a)}}{1-q^{Nn}},\\
\mathcal G_{2,(a;N)}^\ast(\tau)=- \frac {\pi}{N\Im(\tau)}-(2\pi)^2\sum_{n> 0}n\frac{q^{na}+q^{n(N-a)}}{1-q^{Nn}},
\end{gather*}
and for integer $k\ge 3$,
\begin{gather*}
\mathcal G_{k,(a;N)}^\ast(\tau)
=\frac{(-2\pi {\rm i})^k}{(k-1)!}\sum_{n\ge 1} n^{k-1}\(\frac{q^{na}+(-1)^kq^{n(N-a)}}{1-q^{nN}}\).
\end{gather*}
\end{Theorem}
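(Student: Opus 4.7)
The plan is to apply the Lipschitz summation formula of Proposition~\ref{prop:Lip} to the inner sum over $n$ for each fixed $m$, then sum over $m$ and take the limit $s \to 0$. Assuming $0 < a < N$, so that $Nm+a \neq 0$ for all $m$, I would split the outer sum according to the sign of $Nm+a$. In case (i), $m \geq 0$, the argument $\tau' = (Nm+a)\tau$ lies in $\mathfrak{H}$ and Proposition~\ref{prop:Lip} applies directly; in case (ii), $m \leq -1$, write $m = -m'$ with $m' \geq 1$, so that $Nm+a = -(Nm'-a)$, and absorb the sign by substituting $n \to -n$ in the inner sum. This recasts it as a standard Lipschitz sum at $\tau'' = (Nm'-a)\tau \in \mathfrak{H}$, multiplied by an overall factor $(-1)^k$.

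Next I would split Lipschitz's formula into two ingredients: an ``$\Im$-part'' proportional to $\Im(\tau')^{1-2s-k}$, and a ``Bessel part'' involving $W_k(2\pi n\tau', s)$. At $s=0$, the evaluation $W_k(z;0) = {\rm i}^k\sqrt{\pi/2}\, {\rm e}^{{\rm i}z}$ for $\Im z > 0$ (and $0$ otherwise) collapses the Bessel contribution for each $m$ into $\frac{(-2\pi {\rm i})^k}{(k-1)!}\sum_{n \geq 1} n^{k-1} q^{n(Nm+a)}$ in case (i), and the analogue with $q^{n(Nm'-a)}$ in case (ii). The geometric identities $\sum_{m \geq 0} q^{n(Nm+a)} = q^{na}/(1-q^{nN})$ and $\sum_{m' \geq 1} q^{n(Nm'-a)} = q^{n(N-a)}/(1-q^{nN})$, combined with the factor $(-1)^k$ from case (ii), produce the common holomorphic piece
\begin{gather*}
\frac{(-2\pi {\rm i})^k}{(k-1)!}\sum_{n \geq 1} n^{k-1}\frac{q^{na} + (-1)^k q^{n(N-a)}}{1-q^{nN}}
\end{gather*}
that appears in all three formulas.

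The subtle piece is the $\Im$-part. Summed over both halves of $m$ via the Hurwitz zeta function \eqref{eq:Hzeta}, it equals
\begin{gather*}
\frac{(-{\rm i})^k\sqrt{\pi}\,\Gamma(s+(k-1)/2)\Gamma(s+k/2)}{\Gamma(s)\Gamma(s+k)}N^{1-2s-k}\Im(\tau)^{1-2s-k}\bigl[\zeta(a/N, 2s+k-1) + (-1)^k \zeta((N-a)/N, 2s+k-1)\bigr].
\end{gather*}
For $k \geq 3$ the Hurwitz zeta values at $s=0$ are finite while $1/\Gamma(s)$ contributes a simple zero, so the whole expression vanishes and only the $q$-expansion survives. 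For $k=2$ each $\zeta(\cdot, 1+2s)$ has a simple pole at $s=0$ of residue $1/2$, whose sum exactly cancels the zero from $1/\Gamma(s)$ and leaves the finite term $-\pi/(N\Im(\tau))$. For $k=1$ the factor $\Gamma(s+(k-1)/2) = \Gamma(s)$ cancels $\Gamma(s)$ in the denominator outright, and $\zeta(x,0) = 1/2 - x$ turns the bracket into $(N-2a)/N$, yielding the constant $-{\rm i}\pi(1-2a/N)$.

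The main obstacle will be tracking signs carefully through the two-half decomposition and verifying the Laurent expansions in $s$ around $s = 0$ precisely enough that, in the $k\in\{1,2\}$ cases, the $\Gamma(s)^{-1}$ zero and the Hurwitz zeta pole or value cancel to produce exactly the stated leading terms with no spurious higher-order contributions.
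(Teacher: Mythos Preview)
Your proposal is correct and follows essentially the same route as the paper: split the $m$-sum into the two halves where $Nm+a$ is positive (your case~(i), the paper's $m\ge0$ with shift $a$) and negative (your case~(ii), the paper's $m\ge0$ with shift $N-a$ and overall factor $(-1)^k$), apply Proposition~\ref{prop:Lip} to each, collapse the Bessel part via the $W_k(z;0)$ evaluation into the stated $q$-series, and handle the $\Im$-part with the Hurwitz zeta function, using the $1/\Gamma(s)$ zero for $k\ge3$, the pole of $\zeta(x,1+2s)$ for $k=2$, and the cancellation $\Gamma(s)/\Gamma(s)$ together with $\zeta(x,0)=1/2-x$ for $k=1$. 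The only cosmetic slip is that your $\Im(\tau)^{1-2s-k}$ should be $\Im(\tau)^{1-s-k}$ after reinstating the prefactor $\Im(\tau)^s$ from the definition of $\mathcal G_{k,(a;N)}^\ast(s,\tau)$, but this is harmless in the limit $s\to0$.
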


\begin{proof}For any fixed integer $k\ge 1$ and $\operatorname{Re}(s)>0$, we have
\begin{gather*}
\mathcal G_{k,(a;N)}^\ast(s,\tau) =\sum_{m\ge 0\atop n\in\Z}\frac{1}{\((Nm+a)\tau+n\)^k}\frac{\Im(\tau)^s}{|(Nm+a)\tau+n|^{2s}}\\
\hphantom{\mathcal G_{k,(a;N)}^\ast(s,\tau) =}{} +\sum_{m\ge 0\atop n\in\Z}\frac{(-1)^k}{\((Nm+(N-a))\tau+n\)^k}\frac{\Im(\tau)^s}{|(Nm+(N-a))\tau+n|^{2s}}.
\end{gather*}
From Proposition \ref{prop:Lip}, we obtain
\begin{gather*}
\G(s+k)\sum_{m\ge 0\atop n\in\Z} \frac{1}{\((Nm+a)\tau+n\)^k}\frac{\Im(\tau)^s}{|(Nm+a)\tau+n|^{2s}}\\
\qquad{} = \sum_{m\ge 0}\Im(\tau)^s(-{\rm i})^k\sqrt \pi \frac{\G(s+(k-1)/2)\G(s+k/2)}{\G(s)}\Im((Nm+a)\tau)^{1-2s-k}\\
\qquad\quad{} +(-2\pi)^k2^{s+1/2}\pi^{2s-1/2}\sum_{m\ge 0}\Im(\tau)^s\sum_{n\neq 0}{\sign}(n)^k|n|^{2s+k-1}W_k(2\pi n(Nm+a)\tau,s)\\
\qquad{} = (-{\rm i})^k\sqrt \pi \frac{\G(s+(k-1)/2)\G(s+k/2)}{\G(s)}\Im(\tau)^{1-s-k}\sum_{m\ge 0}(Nm+a)^{1-2s-k}\\
\qquad\quad{} +(-2\pi)^k2^{s+1/2}\pi^{2s-1/2}\Im(\tau)^s\sum_{n\neq 0}{\sign}(n)^k|n|^{2s+k-1}\sum_{m\ge 0}W_k(2\pi n(Nm+a)\tau,s)\\
\qquad{} \overset{\eqref{eq:Hzeta}}= (-{\rm i})^k\sqrt \pi \frac{\G(s+(k-1)/2)\G(s+k/2)}{\G(s)}\Im(\tau)^{1-s-k}\frac 1{N^{k+2s-1}}\zeta\(a/N;k+2s-1\)\\
\qquad\quad{} +(-2\pi)^k2^{s+1/2}\pi^{2s-1/2}\Im(\tau)^s\sum_{n\neq 0}{\sign}(n)^k|n|^{2s+k-1}\sum_{m\ge 0}W_k(2\pi n(Nm+a)\tau,s)
\end{gather*}
and similarly,
\begin{gather*}
\G(s+k)\sum_{m\ge 0\atop n\in\Z}\frac{(-1)^k}{\((Nm+(N-a))\tau+n\)^k}\frac{\Im(\tau)^s}{|(Nm+(N-a))\tau+n|^{2s}}\\
\quad{} ={\rm i}^k\sqrt \pi \frac{\G(s+(k-1)/2)\G(s+k/2)}{\G(s)}\Im(\tau)^{1-s-k}\frac 1{N^{k+2s-1}}\zeta\((N-a)/N;k+2s-1\)\\
\quad\quad{} +(2\pi)^k2^{s+1/2}\pi^{2s-1/2}\Im(\tau)^s\sum_{n\neq 0}{\sign}(n)^k|n|^{2s+k-1}\sum_{m\ge 0}W_k(2\pi n(Nm+N-a)\tau,s).
\end{gather*}

Hence, when $k=2$, we have
 \begin{gather*}
\G(s+2)\mathcal G_{2,(a;N)}^\ast(s;\tau)\\
\qquad{} =-\sqrt \pi s\G\(s+\frac 12\)\Im(\tau)^{-1-s}\frac 1{N^{1+2s}}\(\zeta\(\frac aN;1+2s\)+\zeta\(\frac {N-a}N;1+2s\)\)\\
\qquad \quad{}+(2\pi)^22^{s+1/2}\pi^{2s-1/2}\Im(\tau)^s\sum_{n\neq 0}|n|^{2s+1}\sum_{m\ge 0}W_2(2\pi n(Nm+a)\tau,s)\\
\qquad\quad{} +(2\pi)^22^{s+1/2}\pi^{2s-1/2}\Im(\tau)^s\sum_{n\neq 0}|n|^{2s+1}\sum_{m\ge 0}W_2(2\pi n(Nm+N-a)\tau,s).
\end{gather*}
As $s\rightarrow 0^+$,
\begin{gather*}
\mathcal G_{2,(a;N)}^\ast(\tau) = -\pi \frac 1{\Im(\tau)}\frac 1{N} +(2\pi)^22^{1/2}\pi^{-1/2}\\
\hphantom{\mathcal G_{2,(a;N)}^\ast(\tau) =}{}\times \sum_{n> 0}n\sum_{m\ge 0}\(W_2(2\pi n(Nm+a)\tau,0)+W_2(2\pi n(Nm+N-a)\tau,0)\)\\
\hphantom{\mathcal G_{2,(a;N)}^\ast(\tau)}{} =- \frac {\pi}{\Im(\tau)}\frac 1{N}-(2\pi)^2\sum_{n> 0}n\sum_{m\ge 0}\big(q^{n(Nm+a)}+q^{n(Nm+N-a)}\big)\\
\hphantom{\mathcal G_{2,(a;N)}^\ast(\tau)}{} = - \frac {\pi}{\Im(\tau)}\frac 1{N}-(2\pi)^2\sum_{n> 0}n\frac{q^{na}+q^{n(N-a)}}{1-q^{Nn}}.
\end{gather*} To get the first equality, we use the facts that $\G\(\frac 12\)=\sqrt{\pi}$ and $\zeta(x,s)$ has a simple pole at $s=1$ with residue~1. Notice that when $n<0$, for $n' \in \{2\pi n(Nm+a)\tau, 2\pi n(Nm+N-a)\tau \}$, the term $W_2(n',0)$ is $0$ since the imaginary part of $n'$ is negative.

If $k=1$, we have
\begin{gather*}
\G(s+1)\mathcal G_{1,(a;N)}^\ast(s;\tau)=-{\rm i}\sqrt \pi \G\(s+\frac 12\)\Im(\tau)^{-s}\frac 1{N^{2s}}\(\zeta\(\frac aN;2s\)-\zeta\(\frac{N-a}N;2s\)\)\\
\qquad {} +(-2\pi)2^{s+1/2}\pi^{2s-1/2}\Im(\tau)^s\sum_{n\neq 0}{\sign}(n) n^{2s}\sum_{m\ge 0}W_1(2\pi n(Nm+a)\tau,s)\\
\qquad{} -(-2\pi)2^{s+1/2}\pi^{2s-1/2}\Im(\tau)^s\sum_{n\neq 0}{\sign}(n) n^{2s}\sum_{m\ge 0}W_1(2\pi n(Nm+N-a)\tau,s).
\end{gather*}
As $s\rightarrow 0^+$,
\begin{gather*}
\mathcal G_{1,(a;N)}^\ast(\tau) =- {\rm i}\pi\(1-\frac {2a}{N}\) +(-2\pi)2^{1/2}\pi^{-1/2}\\
\hphantom{\mathcal G_{1,(a;N)}^\ast(\tau) =}{}\times \sum_{n> 0} \sum_{m\ge 0}\(W_1(2\pi n(Nm+a)\tau,{0})- W_1(2\pi n(Nm+N-a)\tau,{0})\)\\
\hphantom{\mathcal G_{1,(a;N)}^\ast(\tau)}{}= - {\rm i}\pi\(1-\frac {2a}{N}\)-2\pi {\rm i}\sum_{n> 0}\frac{q^{na}- q^{n(N-a)}}{1-q^{Nn}}.
\end{gather*}
For the first equality, we use $\zeta(x;0)=1/2-x$.

If $k\ge 3$, when $s\rightarrow 0$, $\frac{\G(s+(k-1)/2)\G(s+k/2)}{\G(s)}\Im(\tau)^{1-s-k}\frac 1{N^{k+2s-1}}\zeta\(A/N;k+2s-1\)$ goes to 0 for both $A=a$ and $A=N-a$, the claim follows.
\end{proof}

\section{A proof of Theorem \ref{thm:1} using hypergeometric functions}
Let $x=\frac{4\eta(2\tau)^4\eta(8\tau)^8}{\eta(4\tau)^{12}}$. It is a generator of the field of the modular functions for $\G_0(8)$, a genus~0 subgroup. Note that $x^2=\l(4\tau)$. At the 4 cusps $0$, $1/2$, $1/4$, ${\rm i}\infty$ of $\G_0(8)$, it takes values $1$, $-1$, $\infty$, $0$ respectively. The genus 1 modular curve $X_0(32)$, for the congruence group $\G_0(32)$, is a~4-fold ramified cover of the modular curve $X_0(8)$, for the group $\G_0(8)$.

The cusps of $\G_0(8)$ and their behaviors in $\G_0(32)$ are summarized below:
\begin{gather*}
\arraycolsep=4.7pt\def\arraystretch{1.7}
\begin{array}{c||c|c|c|c}
\G_0(8)& {\rm i}\infty& \frac14& \frac 12& 0\\\hline
\G_0(32)&{\rm i} \infty, \frac18, \frac 38, \frac 1{16}& \frac14,\frac 34& \frac 12&0
\end{array}
\end{gather*}
This means the covering $X_0(32)$ of $X_0(8)$ ramifies completely at the cusps $\frac 12$ and~0; splits completely at the cusp~${\rm i}\infty$; and splits at the cusp~$\frac 14$ into two cusps $\frac 14$ and $\frac 34$, each with ramification degree~2. Thus
\begin{gather*}
y=\big(1-x^2\big)^{1/4}
\end{gather*} is a modular function for $\G_0(32)$. Consequently, a defining equation for the genus 1 modular curve $X_0(32)$ is
\begin{gather*}
y^4=1-x^2.
\end{gather*}
The unique up to scalar holomorphic differential $1$-form on $X_0(32)$ is given by $\frac{{\rm d}x}{y^3}$. By Lemma~\ref{lem:lambda-diff}, one has the following expression of $\frac{{\rm d}x}{y^3}$ as a function of~$\tau$:
\begin{align}
\frac{{\rm d}x(\tau)}{y(\tau)^3} &=\frac{{\rm d}x(\tau)}{(1-x(\tau)^2)^{3/4}}=\frac{{\rm d}\l(4\tau)}{2\l(4\tau)^{1/2}(1-\l(4\tau))^{3/4}}\nonumber\\
& \overset{\eqref{eq:lambda-diff}}= 2\pi {\rm i} \frac{\l(4\tau)^{1/2}\theta_4(4\tau)^4}{(1-\l(4\tau))^{3/4}}{\rm d}\tau \overset{\eqref{eq:theta-eta}, \ \eqref{eq:1-lambda-eta}}= 8 \pi {\rm i} \cdot {f_{32}(\tau)} {\rm d} \tau ,\label{eq:31}
\end{align}
where $f_{32}(\tau):=\eta(4\tau)^2\eta(8\tau)^2$ is the unique weight-$2$ level $32$ normalized cuspidal newform.

\begin{Lemma}$ L(f_{32}, 1) = L\(\eta(4\tau)^2\eta(8\tau)^2,1\)=2^{-7/2}B(1/4,1/4)=\frac18b_{\Q(\sqrt{-4})}$.
\end{Lemma}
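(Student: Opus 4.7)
The plan is to combine the integral representation \eqref{eq:rn} of $L(f_{32},1)$ with the differential identity \eqref{eq:31} to convert the line integral over the imaginary axis in the $\tau$-plane into a classical beta integral in the $x$-plane. Setting $k=2$ and $n=0$ in \eqref{eq:rn} gives
\begin{gather*}
L(f_{32},1) = 2\pi \int_0^\infty f_{32}({\rm i}t)\,{\rm d}t = -2\pi {\rm i} \int_0^{{\rm i}\infty} f_{32}(\tau)\,{\rm d}\tau.
\end{gather*}

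Next I would apply \eqref{eq:31}, which states $8\pi {\rm i} \cdot f_{32}(\tau)\,{\rm d}\tau = {\rm d}x/y^3$, to rewrite the integral as a contour integral in the $x$-variable. The limits are determined by the cusp values of the generator $x$ on $\G_0(8)$ recorded just before \eqref{eq:31}, namely $x({\rm i}\infty)=0$ and $x(0)=1$. This yields
\begin{gather*}
L(f_{32},1) = -\frac{1}{4}\int_0^{{\rm i}\infty} \frac{{\rm d}x}{y^3} = -\frac{1}{4}\int_1^0 \frac{{\rm d}x}{(1-x^2)^{3/4}} = \frac{1}{4}\int_0^1 \frac{{\rm d}x}{(1-x^2)^{3/4}}.
\end{gather*}

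The remaining integral is a disguised beta integral: with the substitution $u=x^2$ (so ${\rm d}x = \tfrac12 u^{-1/2}{\rm d}u$) and the definition \eqref{eq:beta}, it becomes
\begin{gather*}
\int_0^1 \frac{{\rm d}x}{(1-x^2)^{3/4}} = \frac{1}{2}\int_0^1 u^{-1/2}(1-u)^{-3/4}\,{\rm d}u = \frac{1}{2} B(1/2, 1/4),
\end{gather*}
so $L(f_{32},1) = \tfrac{1}{8} B(1/2,1/4)$.

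Finally I would convert $B(1/2,1/4)$ into the desired form using \eqref{eq:beta-gamma} together with the reflection formula \eqref{eq:reflection} (giving $\G(1/4)\G(3/4)=\pi\sqrt{2}$). A short computation gives $B(1/2,1/4)=\G(1/2)\G(1/4)/\G(3/4) = 2^{-1/2} \G(1/4)^2/\G(1/2) = 2^{-1/2} B(1/4,1/4)$, which yields $L(f_{32},1) = 2^{-7/2}B(1/4,1/4)$. Comparing with the formulas for $b_{\Q(\sqrt{-4})}$ in Section~2.1 (or equivalently the restatement after \eqref{eq:reflection}), one has $B(1/4,1/4)=\G(1/4)^2/\sqrt{\pi} = \sqrt{2}\,b_{\Q(\sqrt{-4})}$, and the claimed identity $L(f_{32},1) = \tfrac{1}{8}b_{\Q(\sqrt{-4})}$ follows. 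The only nontrivial step is the change of variables and tracking the endpoints and signs correctly; the beta and Gamma function manipulations are routine given the identities recalled in Section~2.2.
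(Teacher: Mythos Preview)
Your proof is correct and follows essentially the same approach as the paper: both invoke \eqref{eq:rn}, change variables via the differential identity \eqref{eq:31} using the cusp values $x(0)=1$, $x({\rm i}\infty)=0$, reduce to $\tfrac18 B(1/2,1/4)$ by the substitution $u=x^2$ (the paper writes $\lambda=x^2$), and finish with the reflection formula. The only cosmetic difference is that you insert the intermediate identity $B(1/2,1/4)=2^{-1/2}B(1/4,1/4)$ to match the stated form, whereas the paper passes directly from $B(1/2,1/4)$ to $b_{\Q(\sqrt{-4})}$.
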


\begin{proof}By \eqref{eq:rn},
\begin{align*}
L(f_{32},1)&= {2\pi} \int_0^{\infty} f_{32}({\rm i}t) {\rm d} (t)=-{\rm i}{2\pi} \int_0^{{\rm i}\infty} f_{32}({\rm i}t) {\rm d} ({\rm i}t)\\
&= -\frac{1}{4}\int_1^0 \frac{{\rm d}x}{(1-x^2)^{3/4}}=-\frac{1}{8} \int_{1}^0\frac{{\rm d}\l}{\l^{1/2}(1-\l)^{3/4}}\\ & \overset{ \eqref{eq:beta}}= \frac{1}{8} B(1/2,1/4)\overset{\eqref{eq:beta-gamma}}= \frac 18 \frac{\G\(\frac 12\)\G\(\frac 14\)}{\G\(\frac 34\)}\overset{\eqref{eq:reflection}}=\frac 18 \frac{\G\(\frac 12\)\G\(\frac 14\)^2 \sqrt{2}}{2\pi}= \frac18b_{\Q(\sqrt{-4})}.\tag*{\qed}
\end{align*}\renewcommand{\qed}{}
\end{proof}

Since the elliptic curve $y^4=1-x^2$ has CM by the imaginary quadratic field $K = \Q({\rm i})$, there is a character $\psi$ of the id\`ele class group of $K$, algebraic of type $2$ at the complex place~$\infty$, such that $L(f_{32}, s) = L\big(\psi, s-\frac{1}{2}\big)$. See {Section~\ref{ss:idele} or} \cite[p.~145]{Li96} by the first author for more details. The type $2$ condition (see Section~\ref{ss:idele}) means that $\psi_\infty(z) = z/|z|$ for $z \in \mathbb C^\times$. The field~$K$ has only one place $\mathfrak P$ dividing $2$ and norm $N(\mathfrak P) = 2$. As the norm of the different~$\mathfrak d$ of~$K$ over~$\Q$ is equal to~$4$, the absolute value of the discriminant of~$K$ over~$\Q$, and $32$ is equal to the norm of the product of $\mathfrak d$ and the conductor of $\psi$, we conclude that the conductor of~$\psi$ is~$\mathfrak P^3$. Write $\mathcal M_{\mathfrak P}$ for the maximal ideal of the ring of integers of the completion~$K_{\mathfrak P}$ of~$K$ at~$\mathfrak P$. It is principal, generated by $\pi_{\mathfrak P} = 1+{\rm i}$. Then $\psi$ restricted to the group of units $\mathcal U_{\mathfrak P} =1 + \mathcal M_{\mathfrak P}$ at $\mathfrak P$ has kernel $1 + \mathcal M_{\mathfrak P}^3$. Note that the quotient group $(1 + \mathcal M_{\mathfrak P})/(1 + \mathcal M_{\mathfrak P}^3)$ is isomorphic to~$\Z/4\Z$, generated by the image of~${\rm i}$. Write $\psi = \prod_v \psi_v$, where~$v$ runs through all places of~$K$. As~$\psi$ is an id\`ele class character unramified outside $\mathfrak P$ and ${\rm i}$ is a unit everywhere, it follows from $\psi({\rm i}) = \prod_v \psi_v({\rm i}) = \psi_\infty({\rm i}) \psi_{\mathfrak P}({\rm i}) = 1$ that $\psi_{\mathfrak P}({\rm i}) = \psi_\infty({\rm i})^{-1} = -{\rm i}$, showing that $\psi_{\mathfrak P}$ on $(1 + \mathcal M_{\mathfrak P})/\big(1 + \mathcal M_{\mathfrak P}^3\big)$ has order~$4$. This in turn implies that~$\psi^2$ has conductor~$\mathfrak P^2$ and at the complex place it is algebraic of type~$3$. In particular, $\psi_{\mathfrak P}^2({\rm i}) = -1$.

To determine $L\big(\psi, s-\frac{1}{2}\big) = \prod\limits_{v \ne \mathfrak P, \infty} L\big(\psi_v, s - \frac12\big)$ explicitly, we distinguish two cases according to the residual characteristic~$p$ at~$v$.

Case (I) $p \equiv 3 \mod 4$. Then $p$ is inert in $K$, $Nv = p^2$, and we may choose $-p$ as a unifor\-mizer~$\pi_v$ at~$v$. By definition $\psi_\infty(-p) = -1$. It follows from $1 = \psi(-p) =\psi_\infty(-p) \psi_v(-p) \psi_{\mathfrak P}(-p)$ that $\psi_v(-p) = -\psi_{\mathfrak P}(-p)^{-1} = -1$ since $-p \equiv 1 \mod 4$. This shows that
\begin{gather*} L\left(\psi_v, s - \frac12\right) = \frac{1}{1 - (-1) Nv^{1/2 - s}} = \frac{1}{1 -(-p) p^{-2s}}.\end{gather*}

Case (II) $p \equiv 1 \mod 4$. Then $p= a^2 + b^2$ is a sum of two squares. It splits in $K$, $Nv = p$, and we may assume as a uniformizer $\pi_v = a+b{\rm i}$. The choice is unique by requiring $a \equiv 1 \mod 4$ and~$b$ even. Noting that $\pi_{\mathfrak P}^3 = 2(-1+{\rm i})$, we compute $a + b{\rm i} = \frac b2 \pi_{\mathfrak P}^3 + a + b \equiv 1+ b \mod \mathcal M_{\mathfrak P}^3$ so that $\psi_{\mathfrak P}(a+b{\rm i}) =\psi_{\mathfrak P}(1+ b) = (-1)^{b/2}$. Similar computation as above, using $\psi_\infty(a+b{\rm i}) = (a+b{\rm i})/\sqrt p$, yields $\psi_v(\pi_v) = (-1)^{b/2}(a - b{\rm i})/\sqrt p$. Thus
\begin{gather*} L\left(\psi_v, s - \frac12\right) = \frac{1}{1 - (-1)^{b/2}(a - b{\rm i}) p^{-s}}.\end{gather*}
Let $v'$ be the other place of $K$ with residual characteristic $p$. Then
\begin{gather*} L\left(\psi_{v'}, s - \frac12\right) = \frac{1}{1 - (-1)^{b/2}(a + b{\rm i}) p^{-s}}.\end{gather*}

Denote by $S_1$ the set of integral ideals of $\Z[{\rm i}]$ coprime to 2. Then each $\mathfrak I$ in $S_1$ is a principal ideal generated by a unique element $a+b{\rm i}$ with $a \equiv 1 \mod 4$ and $b$ even.
Conversely any $a+b{\rm i}$ of this form generates an integral ideal in~$S_1$. The discussion above shows that
\begin{gather*}\tilde \psi (\mathfrak I) = \tilde \psi ((a+b{\rm i})) = (-1)^{b/2}(a-b{\rm i})\end{gather*}
defines a Hecke Grossencharacter on $S_1$ (denoted $\big(|~|_K^{-1/2}\psi\big)'$ in notation of Section~\ref{ss:idele}) and
\begin{gather*} L(f_{32}, s) = L\(\psi, s - \frac12\) = \sum_{\mathfrak I \in S_1} \tilde \psi (\mathfrak I) (N \mathfrak I)^{-s},\end{gather*}
which in turn gives a $q$-expansion of $f_{32}$ as
\begin{align*}
 f_{32}(\tau) &= \sum_{\mathfrak I \in S_1} \tilde \psi (\mathfrak I) q^{N \mathfrak I} = \sum_{m,n\in \Z}(-1)^n(4m+1-2n{\rm i}) q^{(4m+1)^2+4n^2}\\
 &= \sum_{m,n\in \Z}(-1)^n(4m+1)q^{(4m+1)^2+4n^2}= \eta(4\tau)^2\eta(8\tau)^2.
\end{align*}

Note that the above formula gives a precise expression of the CM modular form $\eta(4\tau)^2\eta(8\tau)^2$. This can be obtained from the multiplier of $\eta(\tau)$, see~\cite{Serre} by Serre for more details.

By converse theorem, there is a weight $3$ cuspidal newform $g$ of level $16$ such that $L\big(\psi^2, s{-}1\big)$ $=L(g, s)$. The local factors of $L(g, s)$ can be easily determined from the above computations. Again, let $v$ be a nonarchimedean place of $K$ with odd residual characteristic $p$. When $p \equiv 3 \mod 4$, $\psi_v(\pi_v) = -1$ so that
\begin{gather*} L\big(\psi_v^2, s-1\big) = \frac{1}{1 - Nv^{1-s}} = \frac{1}{1 - p^2 Nv^{-s}}.\end{gather*}
 When $p \equiv 1 \mod 4$, choose $\pi_v = a + b{\rm i}$ with $a \equiv 1 \mod 4$ and $b$ even, we have $\psi_v(\pi_v) = (-1)^{b/2}(a - b{\rm i})/\sqrt p$ so that
\begin{gather*} L\big(\psi_v^2, s-1\big) = \frac{1}{1 - (a-b{\rm i})^2 Nv^{-s}}.\end{gather*}
Putting together, we have
\begin{gather*} L(g, s) = L\big(\psi^2, s-1\big) = \prod_{v \ne \mathfrak P, \infty} \frac {1}{1 - (a-b{\rm i})^2 Nv^{-s}} = \sum_{\mathfrak I \in S_1} \tilde \psi(\mathfrak I)^2 (N \mathfrak I)^{-s}\end{gather*}
in terms of the Grossencharacter $\tilde \psi^2$. This yields the following $q$-expansion of $g$:
\begin{align*}
 g(\tau) &= \sum_{\mathfrak I \in S_1} \tilde \psi(\mathfrak I)^2 q^{N\mathfrak I} = \sum_{m,n\in \Z} (4m+1 - 2n{\rm i})^2 q^{(4m+1)^2 + 4n^2} \\
 &= \sum_{m,n\in \Z}\big((4m+1)^2 - 4n^2\big)q^{(4m+1)^2 + 4n^2} =\eta(4 \tau)^6.
\end{align*}
Moreover, since $g$ has real Fourier coefficients, we also have $L\big(\psi^{-2}, s-1\big)=L(g, s)$.

\begin{Lemma}We have
\begin{gather*}
g(\tau)=\eta(4\tau)^6, \qquad L\big(\psi^2,s-1\big)=L\big(\eta(4\tau)^6,s\big), \qquad 
L\big(\eta(4\tau)^6,2\big)=\frac{1}{64}B(1/4,1/4)^2.
\end{gather*}
\end{Lemma}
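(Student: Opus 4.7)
The first two claims, $g=\eta(4\tau)^6$ and $L(\psi^2,s-1)=L(g,s)$, are simply the conclusions of the preceding discussion: the converse theorem applied to the unitary id\`ele class character $\psi^2$ produces the weight-$3$ newform $g$ of level $16$, and the explicit $q$-expansion computed there using the Grossencharacter $\tilde\psi^2(\mathfrak I)=(a-bi)^2$ identifies it with $\eta(4\tau)^6$.

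The core assertion is $L(g,2)=\frac{1}{64}B(1/4,1/4)^2$. My plan is to parallel the treatment of $L(f_{32},1)$ above, but for the weight-$3$ period. By \eqref{eq:rn}, $L(g,2)=-(2\pi)^2\int_0^{i\infty}\tau\,g(\tau)\,d\tau$. From \eqref{eq:theta-eta} one reads off $\theta_4(8\tau)=\eta(4\tau)^2/\eta(8\tau)$, whence $g=f_{32}\cdot\theta_4(8\tau)^2$. Combining the theta duplication identity $\theta_3(4\tau)\theta_4(4\tau)=\theta_4(8\tau)^2$ with \eqref{eq:theta3-2F1} and \eqref{eq:theta4} produces the key identification $\theta_4(8\tau)^2=(1-x^2)^{1/4}\cdot{}_2F_1[\frac12,\frac12;1;x^2]$, which upon multiplying by \eqref{eq:31} yields
\begin{gather*}
g(\tau)\,d\tau=\frac{1}{8\pi i}\cdot\frac{\pFq{2}{1}{\frac 12 & \frac 12}{&1}{x^2}}{\sqrt{1-x^2}}\,dx.
\end{gather*}

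Next I invoke the classical Schwarz inversion of the modular lambda function: along the positive imaginary axis,
\begin{gather*}
4\tau=i\cdot\frac{\pFq{2}{1}{\frac 12 & \frac 12}{&1}{1-x^2}}{\pFq{2}{1}{\frac 12 & \frac 12}{&1}{x^2}}.
\end{gather*}
Multiplying, the hypergeometric factor at $x^2$ cancels between $\tau$ and $g(\tau)\,d\tau$, giving
\begin{gather*}
\tau\,g(\tau)\,d\tau=\frac{1}{32\pi}\cdot\frac{\pFq{2}{1}{\frac 12 & \frac 12}{&1}{1-x^2}}{\sqrt{1-x^2}}\,dx.
\end{gather*}
Substituting $u=1-x^2$ and applying \eqref{eq:recursive} with $a_1=a_2=a_3=1/2$, $b_1=b_2=1$, I recognize the resulting single integral as a ${}_3F_2$ value:
\begin{gather*}
\int_0^{i\infty}\tau\,g(\tau)\,d\tau=-\frac{1}{64}\cdot\pFq{3}{2}{\frac 12 & \frac 12 & \frac 12}{&1&1}{1}.
\end{gather*}

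To finish, Clausen's formula \eqref{eq:Clausen} with $a=b=1/4$ gives ${}_3F_2[\frac12,\frac12,\frac12;1,1;1]={}_2F_1[\frac14,\frac14;1;1]^2$; the Gauss summation \eqref{eq:gauss} then yields ${}_2F_1[\frac14,\frac14;1;1]=\Gamma(1/2)/\Gamma(3/4)^2$, which via \eqref{eq:beta-gamma} and the reflection formula \eqref{eq:reflection} simplifies to $B(1/4,1/4)/(2\pi)$. Assembling the pieces, $L(g,2)=(\pi^2/16)\cdot B(1/4,1/4)^2/(4\pi^2)=B(1/4,1/4)^2/64$, as asserted. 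The main obstacle I anticipate is the Landen-type identification $\theta_4(8\tau)^2=(1-x^2)^{1/4}\cdot{}_2F_1[\frac12,\frac12;1;x^2]$: this is the precise alignment that causes the hypergeometric factor in the Schwarz inversion of $\tau$ to cancel against the one arising from the modular parametrization of $g(\tau)\,d\tau$, thereby triggering the Clausen mechanism and collapsing the a priori iterated two-period $L(g,2)$ to a closed-form one-integral.
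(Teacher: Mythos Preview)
Your proof is correct and takes a genuinely different route from the paper's. The paper first evaluates $L(g,1)$: it writes $g(2\tau)=f_{32}(\tau)\cdot\tfrac14\theta_2(4\tau)^2$, converts the period integral to $\frac{1}{16}\frac{\Gamma(1/4)}{\Gamma(5/4)}\,\pFq{3}{2}{\frac12&\frac12&1}{&1&\frac54}{1}$, collapses this to a ${}_2F_1$ (since an upper and a lower parameter coincide) and applies Gauss summation to get $L(g,1)=\frac{1}{32\pi}B(1/4,1/4)^2$; it then passes from $L(g,1)$ to $L(g,2)$ via the functional equation for $L(\psi^2,s)$, which requires computing the root number through an explicit Gauss sum at the ramified place $\mathfrak P$. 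You instead attack $L(g,2)$ directly: the Schwarz inversion $4\tau=i\cdot{}_2F_1[\tfrac12,\tfrac12;1;1-x^2]/{}_2F_1[\tfrac12,\tfrac12;1;x^2]$ absorbs the factor of $\tau$ in the period integral and cancels the hypergeometric factor coming from $g(\tau)\,d\tau$, after which Clausen's formula (with $a=b=\tfrac14$) factors the resulting $\pFq{3}{2}{\frac12&\frac12&\frac12}{&1&1}{1}$ as a perfect square. Your approach bypasses the root-number computation at the cost of importing the classical Schwarz--Jacobi inversion (which is not stated in the paper but is standard); the paper's approach stays within the id\`ele-theoretic framework and, as a byproduct, verifies the sign of the functional equation for $L(g,s)$. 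Both reach the same closed form with comparable hypergeometric input; yours is the more direct computation of the specific value in question, while the paper's also delivers $L(g,1)$ along the way.
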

\begin{proof} The first two statements have been established. It remains to prove the last assertion. The discussion in Section~\ref{section2} gives $g(2\tau)=\eta(8\tau)^6 =\eta(4\tau)^2\eta(8\tau)^2\({\frac 14} \theta_2(4\tau)^2\)$. By~\eqref{eq:31} and~\eqref{eq:theta4}, both $\eta(4\tau)^2\eta(8\tau)^2$ and $\theta_2(4\tau)^2$ can be expressed in terms of~$\l$, from which it follows that
\begin{gather*}
2\pi {\rm i} \cdot g(2\tau) {\rm d}\tau= \frac 1{32} (1-\l(4\tau))^{-3/4} \pFq{2}{1}{\frac 12&\frac 12}{&1}{\l(4\tau)} {\rm d}\lambda(4\tau).
\end{gather*}

Thus, using \eqref{eq:rn}, we have
\begin{align*}
L(g,1)&= 2\pi(-{\rm i}) \int_0^{{\rm i}\infty} g({\rm i}2t) {\rm d}({\rm i}2t) =-2\cdot 2\pi {\rm i} \int_0^{{\rm i}\infty} g({\rm i}2t) {\rm d}({\rm i}t)\\
&= -\frac{1}{16}\int_{\l(0)=1} ^{\l({\rm i}\infty) = 0} (1-\l)^{-3/4} \pFq{2}{1}{\frac 12&\frac 12}{&1}{\l}{\rm d}\l \\
&\overset{\eqref{eq:2F1-integral}}= \frac{1}{16\pi}\int_0^1 (1-\l)^{-3/4} \int_0^1 (u(1-u)(1-\l u))^{-1/2} {\rm d}u {\rm d}\l.
\end{align*}

The above calculation exhibits $L(g,1)$ as a double definite integral of the differential \begin{gather*} \frac{{\rm d}u {\rm d}\l}{(1-\l)^{3/4} (u(1-u)(1-\l u))^{1/2}} \end{gather*} on the variety $v^4=(1-\l)^3(u(1-u)(1-\l u))^2$. Thus we consider this value as a 2-period. It can be evaluated explicitly as
\begin{align*}
L(g,1)&= \frac{1}{16\pi}\int_0^1 (1-\l)^{-3/4} \int_0^1 (u(1-u)(1-\l u))^{-1/2} {\rm d}u {\rm d}\l \\
&\overset{\eqref{eq:recursive}}= \frac{1}{16} \frac{\G(1)\G\(\frac 14\)}{\G\(\frac 54\)} \pFq{3}{2}{\frac 12&\frac 12&1}{&1&\frac 54}{1}
 = \frac{1}{16} \frac{\G\(\frac 14\)}{\G\(\frac 54\)} \pFq{2}{1}{\frac 12&\frac 12}{&\frac 54}{1}\\
&\overset{\eqref{eq:gauss}}= \frac{1}{16} \frac{\G\(\frac 14\)}{\G\(\frac 54\)}\frac{\G\(\frac 54\)\G\(\frac 14\)}{\G\(\frac 34\)^2}
 = \frac{1}{16} \frac{\G\(\frac 14\)^4}{\G\(\frac 34\)^2\Gamma\(\frac 14\)^2}\overset{\eqref{eq:multiplication}}=\frac{\Gamma\(\frac 14\)^4}{32\pi \G\(\frac 12\)^2}=\frac 1{32 \pi} B(1/4,1/4)^2.
\end{align*}

The next step is to relate $L(g,1)$ and $L(g,2)$ using the functional equation~\eqref{eq:feq-idele} for $L(g,s)$. Recall that $\psi^2$ has conductor $\mathfrak P^2$, which has norm~$4$.
The $L$-function attached to $\psi^2$ satisfies the functional equation \begin{gather*}\Gamma_{\C}(s+1)L\big(\psi^2, s\big) = \epsilon\big(\psi^2, s\big) \Gamma_{\C}(2-s)L\big(\psi^{-2}, 1-s\big),\end{gather*} where $\Gamma_{\C}(s) = (2\pi)^{-s}\Gamma(s)$ and $\epsilon\big(\psi^2, s\big) = c N_{K/\Q} \big(\mathfrak d \mathfrak P^2\big)^{1/2-s} = c(16)^{1/2-s}$ for a constant $c$ which is the product of the local root number $c_{\mathfrak P}$ at the place~$\mathfrak P$ where $\psi^2$ ramifies and the root number~$c_\infty$ at the complex place~$\infty$ of~$K$. Replacing~$s$ with $s-1$ yields the functional equation for $L(g, s)$ of the usual type:
\begin{gather}\label{Lg}
\Gamma_{\C}(s) L(g, s) = \epsilon\big(\psi^2, s-1\big)\Gamma_{\C}(3-s)L(g, 3-s).
\end{gather}
Here we used the identity $L\big(\psi^{-2}, s-1\big) = L(g, s)$ observed above this lemma. Now we proceed to compute the constant $c$ using the results in Tate thesis \cite[Chapter~XV, Section~2.4]{Tate}. Recall that $\psi_{\mathfrak P}^2({\rm i}) = -1$ and $\pi_{\mathfrak P} = 1+{\rm i}$. The Gauss sum attached to $\psi_{\mathfrak P}^2$ is
\begin{gather*}\mathcal G\big(\psi_{\mathfrak P}^2\big) := \psi_{\mathfrak P}^2(1){\rm e}^{2 \pi {\rm i} \operatorname{Tr}_{K_{\mathfrak P}/\Q_2}(1/\pi_{\mathfrak P}^4)} + \psi_{\mathfrak P}^2({\rm i}){\rm e}^{2 \pi {\rm i} \operatorname{Tr}_{K_{\mathfrak P}/\Q_2}({\rm i}/\pi_{\mathfrak P}^4)} = -2 = -N_{K/\Q} \big(\mathfrak P^2\big)^{1/2},\end{gather*}
which implies $c_{\mathfrak P} = N_{K/\Q}\big(\mathfrak P^2\big)^{1/2}/\mathcal G\big(\psi_{\mathfrak P}^2\big)= -1$. Further, $c_\infty = 1/(-{\rm i})^{3-1} = -1$. Taking product, we obtain $c = 1$ and $\epsilon\big(\psi^2, s-1\big) = (16)^{3/2 - s}$. Finally plugging $s=1$ in~(\ref{Lg}), we get
\begin{gather*} \Gamma_\C(1) L(g,1)= \epsilon\big(\psi^2, 0\big) \Gamma_\C(2)L(g,2),\end{gather*}
which yields
\begin{gather*} L(g, 2) = (16)^{-1/2}2 \pi B(1/4, 1/4)^2/(32\pi) = B(1/4, 1/4)^2/64\end{gather*}
using the value $L(g,1) = \frac{1}{32 \cdot \pi}B(1/4, 1/4)^2$ computed above.
\end{proof}

Combining the above two lemmas, we have
\begin{gather*} 2 L(\psi, 1/2)^2 = 2 L(f_{32}, 1)^2 = \frac{1}{64} B(1/4, 1/4)^2 = L\big(\psi^2, 1\big),\end{gather*}
as asserted in Theorem \ref{thm:1}.

\section[A proof of the first identity in Theorem \ref{thm:2} using hypergeometric functions and properties of CM modular forms]{A proof of the first identity in Theorem \ref{thm:2} \\ using hypergeometric functions and properties\\ of CM modular forms}\label{section4}

For the degree-3 Fermat curve $F_3\colon X^3+Y^3=1$, another modular parametrization is given by
\begin{gather}\label{F3}
x=\sqrt[3]{\lambda(\tau)}, \qquad y=\sqrt[3]{1-\lambda(\tau)}, \qquad x^3+y^3=1.
\end{gather}
It defines a projection from $F_3$ to the modular curve $X_{\Gamma(2)}$ ramified only above the three cusps of $\Gamma(2)$. This makes $F_3$ a modular curve for the Fermat group $\Phi_3$, an index-$9$ subgroup of~$\Gamma(2)$, with the field of $\mathbb C$-rational functions being the degree-9 abelian extension $\mathbb C\big( \sqrt[3]{\lambda(\tau)}, \sqrt[3]{1-\lambda(\tau)}\big)$ of $\mathbb C(\l)$.

The differential $1$-form $\omega :={\rm d}x/y^2$ on $F_3$ gives rise to a weight $2$ cusp form $f$ for the Fermat group~$\Phi_3$. Substituting~(\ref{F3}) into this differential yields different expressions for $\omega$:
\begin{gather*}
\omega=\frac{{\rm d}x}{y^2}=\frac 1{y^2}\frac{{\rm d}x}{{\rm d}\tau}{\rm d}\tau=\frac 1{y^2}\frac{{\rm d}x}{{\rm d}q}\frac{{\rm d}q}{{\rm d}\tau}{\rm d}\tau= 2\pi {\rm i} \(q \frac 1{y^2}\frac{{\rm d}x}{{\rm d}q} \) {\rm d}\tau= \(\frac 1{y^2}\frac{{\rm d}x}{{\rm d}q}\){\rm d}q.
\end{gather*}

\begin{Lemma}\label{lem:diff1}Write $\omega=2\pi {\rm i} f(\tau){\rm d}\tau$. Then
\begin{gather*}
f(\tau)= q \frac 1{y^2}\frac{{\rm d}x}{{\rm d}q}=\frac{2^{1/3}}3\eta(\tau)^4.
\end{gather*}
\end{Lemma}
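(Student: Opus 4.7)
The plan is to compute $f(\tau)$ directly by using the definition $\omega = 2\pi \mathrm{i} f(\tau)\,\mathrm{d}\tau$ together with the parametrization \eqref{F3}, and then to simplify the resulting expression using the eta-product identities for $\lambda$, $1-\lambda$, and $\theta_4$ collected in Section~\ref{ss:mf}.

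First I would differentiate $x=\lambda(\tau)^{1/3}$ to obtain $\mathrm{d}x = \tfrac{1}{3}\lambda^{-2/3}\,\mathrm{d}\lambda$, and use Lemma~\ref{lem:lambda-diff} (specifically $\lambda'(\tau) = \pi\mathrm{i}\,\lambda\,\theta_4^4$) to rewrite $\mathrm{d}\lambda$ in terms of $\mathrm{d}\tau$. Together with $y^2 = (1-\lambda)^{2/3}$, this yields
\begin{gather*}
\omega \;=\; \frac{\mathrm{d}x}{y^2}
\;=\; \frac{\pi\mathrm{i}}{3}\,\lambda^{1/3}\,(1-\lambda)^{-2/3}\,\theta_4^4\,\mathrm{d}\tau,
\end{gather*}
so that $f(\tau) = \tfrac{1}{6}\,\lambda^{1/3}(1-\lambda)^{-2/3}\theta_4^4$.

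Next I would substitute the eta-product formulas from \eqref{eq:theta-eta}, the expression for $\lambda$ just before \eqref{eq:1-lambda-eta}, and \eqref{eq:1-lambda-eta} itself, namely
\begin{gather*}
\lambda^{1/3} \;=\; 2^{4/3}\,\frac{\eta(\tau/2)^{8/3}\eta(2\tau)^{16/3}}{\eta(\tau)^{8}}, \qquad
(1-\lambda)^{-2/3} \;=\; \frac{\eta(\tau)^{16}}{\eta(\tau/2)^{32/3}\eta(2\tau)^{16/3}}, \qquad
\theta_4^4 \;=\; \frac{\eta(\tau/2)^{8}}{\eta(\tau)^{4}}.
\end{gather*}
The key (and essentially only) check is that the fractional exponents of $\eta(\tau/2)$ and $\eta(2\tau)$ cancel: the total $\eta(\tau/2)$-exponent is $\tfrac{8}{3}-\tfrac{32}{3}+8=0$ and the total $\eta(2\tau)$-exponent is $\tfrac{16}{3}-\tfrac{16}{3}=0$, while $\eta(\tau)$ occurs with exponent $-8+16-4=4$.

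Thus $\lambda^{1/3}(1-\lambda)^{-2/3}\theta_4^4 = 2^{4/3}\eta(\tau)^4$, and $f(\tau) = \tfrac{1}{6}\cdot 2^{4/3}\eta(\tau)^4 = \tfrac{2^{1/3}}{3}\eta(\tau)^4$, proving the lemma. There is no real obstacle here beyond careful bookkeeping of the fractional exponents; once the three eta identities are in place the claim is a direct simplification. A sanity check by comparing the first few coefficients of the $q$-expansions of $q\, y^{-2}\,\mathrm{d}x/\mathrm{d}q$ and $\tfrac{2^{1/3}}{3}\eta(\tau)^4 = \tfrac{2^{1/3}}{3}q^{1/6}(1-4q+2q^2+\cdots)$ could also be used as a double-check (and indeed is available, since the Fermat curve $F_3$ has genus one and weight-$2$ cusp forms for $\Phi_3$ vanishing at the relevant cusp form a one-dimensional space).
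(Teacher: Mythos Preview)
Your proof is correct and follows essentially the same route as the paper: differentiate $x=\lambda^{1/3}$, apply Lemma~\ref{lem:lambda-diff}, and simplify the resulting expression $\lambda^{1/3}(1-\lambda)^{-2/3}\theta_4^4$ via eta-quotients. The only cosmetic difference is that the paper collapses this expression first to $(\theta_2\theta_3\theta_4)^{4/3}$ and then invokes the classical identity $\theta_2\theta_3\theta_4=2\eta(\tau)^3$, whereas you substitute the eta-products for each factor separately and track exponents; both arrive at $2^{4/3}\eta(\tau)^4$ and hence the stated formula.
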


\begin{proof}One can prove it by using \eqref{eq:lambda-diff} and writing $(\l(1-\l))^{1/3}$ as eta quotients. Precisely,
\begin{align*}
2\pi {\rm i} f(\tau){\rm d}\tau & = \frac{{\rm d}x}{y^2}=\frac13(\l(1-\l) )^{-2/3} {\rm d}\l= \frac13 (\l(1-\l) )^{-2/3}\pi {\rm i}\l \theta_4(\tau)^4{\rm d}\tau\\
& = \frac{\pi {\rm i}}3 (\theta_2(\tau)\theta_3(\tau)\theta_4(\tau))^{4/3}{\rm d}\tau= \frac{\pi {\rm i}}3 (2\eta(\tau)^3)^{4/3}{\rm d}\tau.\tag*{\qed}
\end{align*}\renewcommand{\qed}{}
\end{proof}

\begin{Remark}\label{remark4.1} As explained at the end of Section~\ref{ss:mf}, the Fermat curve $F_3$ is in fact isomorphic to $X_0(27)$ over~$\Q$. The above lemma shows that the invariant differential of $F_3$ is expressed as the $\Q$-rational form $\eta(\tau)^4$ times an irrational multiple $2^{1/3}/3$. We know that $\eta(6\tau)^4$ is a cusp form for $\G_0(36)$. To obtain a $\Q$-rational differential, we may use the algebraic model $E_2\colon x^3+y^3=1/4$ for the Fermat curve so that it is isogenous to~$X_0(36)$ over~$\Q$.
\end{Remark}

We now compute the period $L\big(\eta(6 \tau)^4,1\big)$ using Lemma \ref{lem:diff1} and the fact $\l({\rm i}\infty)=0$, $\l(0)=1$.

\begin{Lemma}\label{lem:Lvalue-36}
\begin{gather*}L\big(\eta(6\tau)^4,1\big)=\frac1{3\cdot 2^{4/3}}B(1/3,1/3) = \frac {1}{3^{5/4}\cdot2^{5/6}}b_{\Q(\sqrt{-3})}.\end{gather*}
\end{Lemma}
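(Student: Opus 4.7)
The plan is to compute $L(\eta(6\tau)^4,1)$ by translating the integral in \eqref{eq:rn} into a period integral on the Fermat curve $F_3$ using Lemma~\ref{lem:diff1}, and then evaluating the resulting integral as a beta function. I would proceed in four steps.

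First, apply formula \eqref{eq:rn} with $k=2$, $n=0$:
\begin{gather*}
L\bigl(\eta(6\tau)^4,1\bigr)=2\pi\int_0^\infty \eta(6\mathrm{i}t)^4\,\mathrm{d}t.
\end{gather*}
Now substitute $s=6t$ to replace $\eta(6\tau)^4$ by $\eta(\tau)^4$, obtaining an integral of $\eta(\mathrm{i}s)^4$ from $0$ to $\infty$ with an overall factor $\pi/3$. This is the routine bookkeeping step that keeps track of the level discrepancy between the Fermat-group form $\eta(\tau)^4$ and its congruence companion $\eta(6\tau)^4$.

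Second, invoke Lemma~\ref{lem:diff1}, which gives $2\pi\mathrm{i} f(\tau)\,\mathrm{d}\tau=\mathrm{d}x/y^2$ with $f(\tau)=\tfrac{2^{1/3}}3\eta(\tau)^4$, so that $\eta(\tau)^4\,\mathrm{d}\tau=\tfrac{3}{2\pi\mathrm{i}\cdot 2^{1/3}}\,\mathrm{d}x/y^2$. Pulling back along $\tau=\mathrm{i}s$ for $s\in(0,\infty)$ converts the $s$-integral into an integral of $\mathrm{d}x/y^2$ along the image of the positive imaginary axis on $F_3$, where $(x,y)=\bigl(\lambda(\tau)^{1/3},(1-\lambda(\tau))^{1/3}\bigr)$. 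Since $\lambda(\mathrm{i}\infty)=0$ and $\lambda(0)=1$, the endpoints are $x=0$ and $x=1$, yielding
\begin{gather*}
L\bigl(\eta(6\tau)^4,1\bigr)=\frac{1}{2^{4/3}}\int_0^1\frac{\mathrm{d}x}{(1-x^3)^{2/3}}.
\end{gather*}

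Third, substitute $u=x^3$, $\mathrm{d}u=3u^{2/3}\,\mathrm{d}x$, to recognise the right-hand integral as $\tfrac13 B(1/3,1/3)$ by \eqref{eq:beta}. This immediately gives the first equality
\begin{gather*}
L\bigl(\eta(6\tau)^4,1\bigr)=\frac{1}{3\cdot 2^{4/3}}\,B(1/3,1/3).
\end{gather*}

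Fourth, translate $B(1/3,1/3)$ into $b_{\Q(\sqrt{-3})}$ using \eqref{eq:beta-gamma}, the reflection formula \eqref{eq:reflection} applied at $a=1/3$ to eliminate $\Gamma(2/3)$ in terms of $\Gamma(1/3)$, and the explicit formula $b_{\Q(\sqrt{-3})}=(3/4)^{3/4}\pi^{-1}\Gamma(1/3)^3$ from Section~\ref{section2}. A short arithmetic of the exponents of $2$ and $3$ then gives $B(1/3,1/3)=\tfrac{2^{1/2}}{3^{1/4}}\,b_{\Q(\sqrt{-3})}$, completing the second equality.

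The whole argument is a direct change of variables; there is no serious obstacle. The only place where one must be careful is step~one, where the factor $1/6$ from $s=6t$ combines with the irrational constant $2^{1/3}/3$ of Lemma~\ref{lem:diff1}, and at the end step~four, where the Gamma-function manipulation can easily produce a sign error in the exponents of $2$ and $3$ if one is not methodical.
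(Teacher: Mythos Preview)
Your proof is correct and follows essentially the same route as the paper's: both start from \eqref{eq:rn}, rescale $6t\mapsto \tau$, invoke Lemma~\ref{lem:diff1} to convert to a period integral on $F_3$, and recognise the result as $B(1/3,1/3)$. The only cosmetic difference is that the paper integrates directly in $\lambda$ (using $\mathrm{d}x/y^2=\tfrac13(\lambda(1-\lambda))^{-2/3}\,\mathrm{d}\lambda$ from the proof of Lemma~\ref{lem:diff1}), whereas you integrate in $x$ and then substitute $u=x^3=\lambda$, arriving at the same beta integral.
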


\begin{proof}Using Lemma \ref{lem:diff1}, we compute the period $L\big(\eta(6 \tau)^4,1\big)$ as follows:
\begin{align*}
L\big(\eta(6\tau)^4,1\big)&=2\pi\int_0^\infty \eta({\rm i}6t)^4{\rm d}t=-{\rm i}\frac 16\cdot 2\pi\int_0^{{\rm i}\infty} \eta(\tau)^4{\rm d}\tau \\
&=-\frac{2 \pi {\rm i}}6 \frac{1}{2^{1/3}2\pi {\rm i}} \int_{1}^0 (\l(1-\l))^{-2/3}{\rm d}\l=\frac {1}{3\cdot2^{4/3}}\int_{0}^1 (\l(1-\l))^{-2/3} {\rm d}\l \\
&\overset{\eqref{eq:beta}}=\frac {1}{3\cdot2^{4/3}}B(1/3,1/3)=\frac {1}{3^{5/4}\cdot2^{5/6}}b_{\Q(\sqrt{-3})}.
\end{align*} The last equality follows from $B(1/3,1/3)=(4/3)^{1/4}b_{\Q(\sqrt{-3})}$.
\end{proof}

The elliptic curve $X_0(36)$ has CM by $\Q\big(\sqrt{-3}\big)$. Consequently there is a character $\chi$ of the id\`ele class group of~$\Q(\sqrt{-3})$, algebraic of type $2$, such that $L(\chi, s-1/2)$ is equal to the Hasse--Weil $L$-function of $X^3+Y^3=1/4$. The weight-2 normalized Hecke eigenform of level $36$, denoted~$f_{36}$, is the cusp form such that $L(f_{36}, s) = L(\chi, s - 1/2)$. Similarly, higher powers of $\chi$ also give rise to modular forms of higher weights. We shall identify some of these forms explicitly. We begin with the description of~$\chi$.

Let $\zeta_n = {\rm e}^{2 \pi {\rm i}/n}$ be a primitive $n$th root of unity. The imaginary quadratic field $F = \Q(\zeta_6)$ has class number $1$ and discriminant $-3$ so that $3$ is the only prime ramified in~$F$. Further a~prime~$p$ is inert in $F$ if and only if $p \equiv 2 \mod 3$. The ring of integers of $F$ is $\Z[\zeta_6]$. Let~$S_2$ be the set of nonzero integral ideals of $\Z[\zeta_6]$ coprime to~$6$.

\begin{Proposition}\label{prop:4.1} Each ideal $\mathfrak I \in S_2$ has a unique generator of the form $a + b \sqrt{-3}$ with $a, b \in \Z$, $a \equiv 1 \mod 3$ and exactly one of $a$, $b$ is even.
\end{Proposition}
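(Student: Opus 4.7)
Since $F=\Q(\zeta_6)=\Q(\sqrt{-3})$ has class number $1$, every $\mathfrak{I}\in S_2$ is principal, say $\mathfrak{I}=(\alpha)$. The unit group $\Z[\zeta_6]^\times=\{\pm\zeta_6^j : j=0,1,2\}$ has order six, so $\mathfrak{I}$ has exactly six generators $\pm\zeta_6^j\alpha$. The plan is to show that the three conditions --- lying in $\Z[\sqrt{-3}]$, having ``real part'' $a\equiv 1\pmod 3$, and having exactly one of $a,b$ even --- together single out exactly one of these six associates.

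The first step exploits the isomorphism $\Z[\zeta_6]/2\Z[\zeta_6]\cong \F_4$ (the minimal polynomial $x^2-x+1$ of $\zeta_6$ reduces modulo $2$ to the irreducible $x^2+x+1$). Writing an element of $\Z[\zeta_6]$ as $m+n\zeta_6$, it lies in $\Z[\sqrt{-3}]=\Z+\Z\cdot(2\zeta_6-1)$ iff $n$ is even, equivalently, iff its image in $\F_4$ lies in the prime subfield $\F_2$. Because $\mathfrak{I}$ is coprime to $2$, the image $\bar\alpha\in\F_4$ is a unit, and the three associates $\zeta_6^j\alpha$ reduce to $\bar\zeta_6^{\,j}\bar\alpha$. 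Since $\bar\zeta_6$ generates the cyclic group $\F_4^\times$ of order three, exactly one of these three reductions equals $1\in\F_2$. This singles out a unique $j\in\{0,1,2\}$ for which $\zeta_6^j\alpha$ takes the form $a+b\sqrt{-3}$ with $a,b\in\Z$.

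For the second step, note that $3=-(\sqrt{-3})^2$, so $(\sqrt{-3})$ is the unique prime of $\Z[\zeta_6]$ above $3$; and for an element of $\Z[\sqrt{-3}]$ one has $\sqrt{-3}\mid a+b\sqrt{-3}$ iff $\sqrt{-3}\mid a$ iff $3\mid a$. Coprimality of $\mathfrak{I}$ to $3$ therefore forces $a\not\equiv 0\pmod 3$, so a unique choice of sign in $\pm(a+b\sqrt{-3})$ enforces $a\equiv 1\pmod 3$.

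Finally, the parity condition is automatic: using $\sqrt{-3}=2\zeta_6-1$, we have $a+b\sqrt{-3}=(a-b)+2b\zeta_6\equiv a-b\pmod{2\Z[\zeta_6]}$, so $a+b\sqrt{-3}$ is coprime to $2$ iff $a-b$ is odd, i.e., iff $a$ and $b$ have opposite parities. Since $\mathfrak{I}$ was assumed coprime to $2$, this condition holds for free. The main (mild) obstacle is simply keeping track of the $\F_4$-action on generators modulo $2$; once that is tabulated, the three conditions contribute independent factors $3\cdot 2\cdot 1=6$, matching the order of the unit group and yielding both existence and uniqueness.
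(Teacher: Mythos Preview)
Your proof is correct and takes a somewhat different route from the paper's. The paper establishes existence by an explicit case analysis on the parities of $c,d$ in a generator $c+d\zeta_6$: if $d$ is even it already lies in $\Z[\sqrt{-3}]$; if $d$ is odd and $c$ even one multiplies by $\zeta_6^2$; if both are odd one multiplies by $\zeta_6$. Uniqueness is simply declared ``straightforward to check.'' You instead package the same parity information conceptually by passing to $\Z[\zeta_6]/2\cong\F_4$ and observing that the $\langle\zeta_6\rangle$-orbit of any unit in $\F_4^\times$ hits the prime subfield $\F_2^\times=\{1\}$ exactly once; this simultaneously gives existence and pins down the unique $j\in\{0,1,2\}$, so uniqueness comes for free rather than being left to the reader. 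Your handling of the sign (via the residue field at $\sqrt{-3}$) and the automatic parity condition agrees with the paper's reasoning. The paper's case work is more elementary and self-contained; your argument is cleaner and explains \emph{why} the case work succeeds, and it makes the counting $3\cdot 2\cdot 1=6=|\Z[\zeta_6]^\times|$ transparent.
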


\begin{proof} The uniqueness of such kind of generator is straightforward to check. We prove the existence. Observe that it suffices to show that $\mathfrak I$ has a generator of the form $m+ n \sqrt{-3}$ with $m, n \in \Z$. If so, then $N(\mathfrak I) = m^2 + 3 n^2$, being coprime to $3$, is congruent to $1$ modulo $3$, implying $m \equiv \pm 1 \mod 3$. Hence we have $(a, b) = (m, n)$ or $(-m, -n)$ according as $m \equiv 1$ or $-1 \mod 3$. Further, since $N(\mathfrak I)$ is odd, exactly one of~$a$,~$b$ is even.

An ideal $\mathfrak I \in S_2$ has a generator of the form $c + d \zeta_6$ with $c, d \in \Z$ and not both even. If $d = 2n$ is even, then $c + d \zeta_6 = c + n\big(1 + \sqrt{-3}\big) = (c+n) + n \sqrt{-3}$.
If $d$ is odd and $c=2m$ is even, consider the generator $\zeta_6^2(c + d \zeta_6) = -d + m\big({-}1+\sqrt{-3}\big)= (-d-m) + m \sqrt{-3}$. Finally suppose~$c$ and~$d$ are both odd. The generator $\zeta_6(c + d \zeta_6) = d \zeta_6^2 + c \zeta_6 = d(\zeta_6 - 1) + c \zeta_6 = -d + (d + c)\zeta_6$ with $c+d$ even has the desired form.
\end{proof}

\begin{Remark}\label{remark4.2} The above argument shows that any nonzero ideal $\mathfrak I$ of $\Z[\zeta_6]$ with norm coprime to $3$ is generated by an element $x= a + b \sqrt{-3}$ with $a, b \in \Z$ and $a \equiv 1 \mod 3$. If, in addition, $N(\mathfrak I)$ is even, then~$x$, $x \zeta_3$ and $x\zeta_3^2$ are the only generators of~$\mathfrak I$ of this form.
\end{Remark}

Note that if $\mathfrak I = \big(a + b \sqrt{-3}\big)$ and $\mathfrak J = \big(c + d\sqrt{-3}\big)$ are two ideals in $S_2$ with generators of the desired form, then $\big(a+b\sqrt{-3}\big)\big(c + d\sqrt{-3}\big)$ is the desired generator of $\mathfrak {I J}$.

Denote by $v_2$ (resp.~$v_3$) the only place of $F$ above $2$ (resp.~$3$); it has norm $Nv_2 = 4$ (resp.\ \smash{$Nv_3 = 3$}). For $j \in \{2, 3\}$ denote by $\mathcal M_j$ the maximal ideal at~$v_j$, and~$\mathcal U_j$ the group of units at~$v_j$. Let $\chi$ be the id\`ele class character of $F$ so that $L\big(\chi, s - \frac12\big)$ is the Hasse--Weil $L$-function of the elliptic curve $X_0(36)$ over~$\Q$. Since the conductor $36$ of the elliptic curve $X_0(36)$ is the product of the absolute value of the discriminant $-3$ and the norm of the conductor of $\chi$, we conclude that $\chi$ has conductor $\mathcal M_2 \mathcal M_3$. Denote by $\chi_v$ the restriction of $\chi$ at the place $v$ of $F$. By definition, at the complex place $\infty$ of $F$, $\chi_\infty(z) = z/|z|$. Since $\chi_{v_3}$ is nontrivial on {$\mathcal U_{3}/(1+\mathcal M_3)$}, which is generated by~$-1$, we have $\chi_{v_3}(-1) = -1$. As $\zeta_3$ is a unit of $F$ of order $3$, we have $\chi_{v_3}(\zeta_3) = 1$. The group $\mathcal U_2/(1 + \mathcal M_2)$ is cyclic of order $3$, generated by $\zeta_3$. Therefore we have $\chi_{v_2}(\zeta_3) = \chi_\infty(\zeta_3)^{-1} = \zeta_3^2$.

Parallel to what we did for $\psi$, we determine the value of $\chi_v(\pi_v)$ at a uniformizer $\pi_v$ at the place $v$ of $F$ not equal to $ v_2$, $v_3$, $\infty$ according to its residual characteristic~$p$.

Case (I). $p \equiv 2 \mod 3$ and $p \ne 2$. Then $p$ is odd and $Nv = p^2$. Choose $\pi_v = -p$ so that $\chi_{v_3}(-p) = 1$ and $\chi_{v_2}(-p)=1$. Then $\chi_v(-p) = \chi_\infty(-p)^{-1} = -1$.

Case (II). $p \equiv 1 \mod 3$. Then $p$ splits in $F$ so that $Nv = p$. By the proposition above, we may choose $\pi_v = a + b \sqrt{-3}$ with $a, b \in \Z$, $a \equiv 1 \mod 3$ and exactly one of $a$, $b$ is even. Observe that $\chi_{v_3}\big(a + b \sqrt{-3}\big) = \chi_{v_3}(1)=1$.
Next we compute $\chi_{v_2}\big(a + b \sqrt{-3}\big)$. If $a$ is even, then $\chi_{v_2}\big(a + b \sqrt{-3}\big) = \chi_{v_2}\big(\sqrt{-3}\big) = 1$ since its square is $\chi_{v_2}(-3) = \chi_{v_2}(1) = 1$ and $\chi_{v_2}$ on $\mathcal U_2$ has order $3$. If $b$ is even, then $\chi_{v_2}\big(a + b \sqrt{-3}\big) = \chi_{v_2}(1) = 1$. So we always have $\chi_{v_2}\big(a + b \sqrt{-3}\big) =1$. Together with $\chi_\infty\big(a+ b \sqrt{-3}\big) = \big(a + b \sqrt{-3}\big)/\sqrt p$, we conclude that $\chi_v(\pi_v) = \chi_v\big(a + b \sqrt{-3}\big) = \big(a - b \sqrt{-3}\big)\sqrt p$.

Then $L\big(\chi, s - \frac12\big) = \sum\limits_{\mathfrak I \in S_2} \tilde \chi(\mathfrak I)N(\mathfrak I)^{-s}$, where $\tilde \chi$, equal to $\big(|~|_F^{-1/2}\chi\big)'$ in the notation of Section~\ref{ss:idele}, is the Grossencharacter on~$S_2$ whose value at $\mathfrak I = \big(a+ b \sqrt{-3}\big)$ with $a \equiv 1 \mod 3$ is given by
\begin{gather*} \tilde \chi (\mathfrak I) = \tilde \chi\big(a + b \sqrt{-3}\big) = a - b \sqrt{-3}.\end{gather*}
Since $L(f_{36}, s) = L\big(\chi, s - \frac12\big)$, we obtain an expression for the $q$-expansion of $f_{36}$:
\begin{align}
f_{36}(\tau) & = \sum_{m, n \in \Z\atop{~m \equiv n \mod 2}} \big(3m+1 - n \sqrt{-3}\big)q^{(3m+1)^2 + 3n^2} \nonumber\\
& = \sum_{m, n \in \Z \atop{m \equiv n \mod 2}} (3m+1)q^{(3m+1)^2 + 3n^2} = \eta(6 \tau)^4.\label{f36}
\end{align}

Next we identify the modular forms corresponding to $\chi^k$, predicted by the converse theorem, for the first couple values of $k$. When $k=2$, there is a cusp form $h_3$ of weight $3$ so that $L(h_3, s) = L\big(\chi^2, s - 1\big)$. When $k=3$, there is a cusp form $h_4$ of weight $4$ so that $L(h_4, s) =L\big(\chi^3, s - \frac32\big)$. Note that~$\chi^2$ has conductor $\mathcal M_2$ so that $h_3$ has level $12$ while $\chi^3$ has conductor $\mathcal M_3$ so that $h_4$ has level $9$. For each case the corresponding spaces of cusp forms are 1-dimensional from which one can conclude $h_3=\eta(2\tau)^3\eta(6\tau)^3$ and $h_4=\eta(3\tau)^8$ right away. Here we give a bit more details. To write the corresponding $L$-functions, we choose $\pi_{v_3} = \sqrt{-3}$ and $\pi_{v_2} = -2$ and compute $\chi_{v_3}(\pi_{v_3})= \chi_\infty(\sqrt{-3})^{-1} \chi_{v_2}(\sqrt{-3})^{-1} = {\rm i}^{-1} = -{\rm i}$ and $\chi_{v_2}(\pi_{v_2}) = \chi_\infty( -2)^{-1} \chi_{v_3}( -2)^{-1}=-1$. Thus $\chi_{v_3}^2(\pi_{v_3}) = -1$. Therefore
\begin{align*}
L(h_3, s) &= L\big(\chi^2, s - 1\big) = \frac{1}{1 -(-3) 3^{-s}} \cdot \sum_{\mathfrak I \in S_2} \tilde \chi^2(\mathfrak I)N(\mathfrak I)^{-s}\\
&=\sum_{k \ge 0}(-3)^k 3^{-ks} \cdot \sum_{m, n \in \Z \atop{m \equiv n \mod 2}} \big(3m+1 - n \sqrt{-3}\big)^2\big((3m+1)^2 + 3n^2\big)^{-s}\\
&=\sum_{k\ge 0, m, n \in \Z\atop{m \equiv n \mod 2}} (-3)^k\big((3m+1)^2 - 3n^2\big)\big(3^k\big((3m+1)^2 + 3n^2\big)\big)^{-s}.
\end{align*}
This in turn gives a $q$-expansion for $h_3$ as follows:
\begin{gather*}
h_3(\tau)= \eta(2\tau)^3\eta(6\tau)^3 = \sum_{k \ge 0, m, n \in \Z\atop{m \equiv n \mod 2}}(-3)^k \big((3m+1)^2 - 3n^2\big) q^{3^k((3m+1)^2 + 3n^2)}.
\end{gather*}

Similarly, $\chi_{v_2}^3(\pi_{v_2})= -1$ and
\begin{gather*}
\begin{split}
& L(h_4, s) = L\left(\chi^3, s - \frac32\right) = \frac{1}{1 - (-8)4^{-s}}\cdot \sum_{\mathfrak I \in S_2} \tilde \chi^3(\mathfrak I)N(\mathfrak I)^{-s}\\
& \hphantom{L(h_4, s)}{} = \sum_{k \ge 0}(-8)^k4^{-ks} \cdot \sum_{m, n \in \Z\atop{m \equiv n \mod 2}} \big(3m+1 - n \sqrt{-3}\big)^3\big((3m+1)^2 + 3n^2\big)^{-s}\\
& \hphantom{L(h_4, s)}{}= \sum_{k \ge 0, ~m, n \in \Z\atop{m \equiv n \mod 2}} (-8)^k\big((3m+1)^3 - (3m+1)9n^2\big)\big(4^k\big((3m+1)^2 + 3n^2\big)\big)^{-s},
\end{split}
\end{gather*} which in turn yields a $q$-expansion of $h_4$ as
\begin{gather*}
h_4(\tau) = \eta(3 \tau)^8= \sum_{k \ge 0, ~ m, n \in \Z\atop{m \equiv n \mod 2}} (-8)^k\big((3m+1)^3 - (3m+1)9n^2\big)q^{4^k((3m+1)^2 + 3n^2)}.
\end{gather*}The reader is referred to~\cite{Serre} by Serre for discussions of $q$-expansions of powers of $\eta(\tau)$ when they satisfy complex multiplication.

Using the identification $f_{36}(\tau) = \eta(6 \tau)^4$, we restate Lemma~\ref{lem:Lvalue-36} as
\begin{gather*}L(f_{36},1)=\frac1{3\cdot 2^{4/3}}B(1/3,1/3) = \frac {1}{3^{5/4}\cdot2^{5/6}}b_{\Q(\sqrt{-3})}.\end{gather*}

We proceed to compute the period $L(h_3,2)$. First, using notation in Section~\ref{ss:mf} and setting $s(\tau) = c(\tau)/a(\tau)$, we rewrite $h_3(\tau/2)$ as
\begin{align*}
h_3(\tau/2)&={{\eta(\tau)^3\eta(3\tau)^3}} =\(\frac{bc}3\)^{3/2}=3^{-3/2}\big(s\big(1-s^3\big)^{1/3}a^2\big)^{3/2}\\
 &=\big(s\big(1-s^3\big)a^2/3\big)\big(s^{1/2}\big(1-s^3\big)^{-1/2}3^{-1/2} a\big).
\end{align*} Using \eqref{eq:ds}, one has
\begin{align*}
L(h_3(\tau/2),1)& = -2 \pi {\rm i} \int_0^{{\rm i}\infty}h_3(\tau/2){\rm d}\tau \overset{\eqref{eq:a-2F1}}= \frac{1}{\sqrt{3}}\int_0^1 s^{1/2}\big(1-s^3\big)^{-1/2} \pFq{2}{1}{\frac 13&\frac 23}{&1}{s^3} {\rm d}s\\
& \overset{u:=s^3}= \frac{1}{3\sqrt 3}\int_0^1 u^{-1/2}(1-u)^{-1/2} \pFq{2}{1}{\frac 13&\frac 23}{&1}{u} {\rm d}u \\
& \overset{\eqref{eq:recursive}}= \frac{\pi}{3\sqrt 3} \pFq{3}{2}{\frac 13&\frac 23&\frac 12}{&1&1}{1} .
\end{align*}
This expression leads to the period $L(h_3,2)$ by applying \eqref{eq:rn}:
\begin{align*}
L(h_3,2)=&(2\pi)^2 \int_0^{\infty}h_3({\rm i}t)t{\rm d}t= (2\pi)^2 \frac14\int_0^{\infty}h_3({\rm i}t/2)t{\rm d}t\\
=&- \pi^2 \int_0^{{\rm i}\infty}h_3(\tau/2)\tau {\rm d}\tau =- \frac{\pi^2}{9} \int_0^{{\rm i}\infty}\frac{1}{\tau^3} \eta\(\frac{-1}{3\tau}\)^3\eta\(\frac{-1}{\tau}\)^3 {\rm d}\tau\\
=&- {\rm i} \frac{\pi^2}{9} 3^{3/2} \int_0^{{\rm i}\infty} \eta\({3\tau}\)^3\eta\({\tau}\)^3 {\rm d}\tau = \frac{\pi}{2\sqrt 3} L(h_3(\tau/2),1)
= \frac{\pi^2}{18} \pFq{3}{2}{\frac 13&\frac 23&\frac 12}{&1&1}{1}.
\end{align*}

In the above computation, the 4th equality results from the change of variable replacing $\tau$ by $-1/3\tau$, and the 5th equality uses the transformation identity for $\eta$ function
\begin{gather*} \eta\(-\frac{1}{\tau}\) = \sqrt{-{\rm i}\tau}\eta(\tau).\end{gather*}
Note that the 4th and the 5th steps combined relates $L(h_3, 2)$ to $L(h_3, 1)$, which could be computed using the functional equation for $L\big(\chi^2, s\big)$ or $L(h_3,s)$ as we did in the previous section. Here we took advantage of the fact that $h_3(\tau/2)$ is an $\eta$-product to compute directly the relation between $h_3(\tau)$ and $h_3(-1/(12\tau))$, which is the inverse Mellin transform of the functional equation relating $L(h_3,s)$ and $L(h_3, 3-s)$.

To finish, note the relation between special values
\begin{align*}
\pFq{3}{2}{\frac 13&\frac 23&\frac 12}{&1&1}{1} & =\pFq{2}{1}{\frac16&\frac 13}{&1}{1}^2=\frac{\G(1/2)^2\G(1)^2}{\G(2/3)^2\G(5/6)^2}\\
& \overset{\eqref{eq:multiplication}}=\frac{1}{4\pi^2}\frac{\G(1/2)^2\G(1/6)^2}{\G(2/3)^2}=\frac{3\cdot 2^{1/3}}{8\pi^2} B(1/3,1/3)^2.
\end{align*}The first equality above results from the Clausen formula \eqref{eq:Clausen},
and the Gauss summation formula \eqref{eq:gauss} is used to prove the second equality. Therefore,
\begin{gather*}
L(h_3, 2)= L\big(\eta(2\tau)^3\eta(6\tau)^3,2\big)=\frac{2^{1/3}}{48} B(1/3,1/3)^2=\frac{3}{2}L(f_{36},1)^2.
\end{gather*}
As observed before, $L(f_{36}, 1) = L(\chi, 1/2)$ and $L(h_3, 2) = L\big(\chi^2, 1\big)$, this proves the first identity in Theorem~\ref{thm:2} for both id\`ele class characters and cusp forms.

\section[Computing $L$-values using Eisenstein series and a proof of the second identity in Theorem \ref{thm:2}]{Computing $\boldsymbol{L}$-values using Eisenstein series\\ and a proof of the second identity in Theorem \ref{thm:2}}
Now we switch our approach to use CM values of Eisenstein series discussed in Section~\ref{ss:2.5}. This will allow us to compute more values of $L(\chi^k,k/2)$ systematically. Denote by~$\Sigma$ the set of nonzero ideals of $\Z[\zeta_6]$ coprime to $\mathcal M_3$. Then $\Sigma = \cup_{i \ge 0} (-2)^iS_2 = S_2 \cup (-2)\Sigma$.

Recall from the discussion in Section~\ref{section4} that except for the local factors at $v_2$ and $v_3$, $L\big(\chi^k, s-k/2\big)$ agrees with $\sum_{\mathfrak I \in S_2} \tilde \chi^k(\mathfrak I)N(\mathfrak I)^{-s}$. Furthermore, the conductor of $\chi^k$ is
\begin{gather*} \begin{cases}\mathcal M_2\mathcal M_3& \text{if } k\equiv 1, 5 \mod 6,\\
\mathcal M_2 & \text{if } k\equiv 2,4 \mod 6,
\\
\mathcal M_3 & \text{if } k\equiv 3 \mod 6,
\\
{\mathcal O}_F & \text{if } k\equiv 0 \mod 6,
\end{cases}\end{gather*}respectively.
Here $\mathcal O_F = \Z[\zeta_6]$ is the ring of integers of $F$. It follows from definition \eqref{eq:L-idele} that
\begin{gather}\label{eq:chi-N}L\big(\chi^k,s-k/2\big)=\sum_{\mathfrak I \in S_2} \tilde \chi^k(\mathfrak I)N(\mathfrak I)^{-s} \cdot \begin{cases}1
& \text{if } k\equiv \pm 1\mod 6,\\
\dfrac 1{1-(-3)^{k/2}3^{-s}} &\text{if } k\equiv \pm 2\mod 6,\\
{{\dfrac 1{1-(-2)^{k}4^{-s}}}}&\text{if } k\equiv 3\mod 6,\\
{{\dfrac 1{1-(-2)^{k}4^{-s}}}}\frac 1{1-(-3)^{k/2}3^{-s}} &\text{if } k\equiv 0\mod 6.
\end{cases}\hspace*{-10mm}
\end{gather}

Observe that, for $k \equiv 0 \mod 3$, $\chi^k$ is unramified at $v_2$, and we can extend the definition of~$\tilde \chi^k$ on $S_2$ to $\Sigma$ by the same formula, that is, for $\mathfrak I = \big(a + b \sqrt{-3}\big) \in \Sigma$ with $a, b \in \Z$ and $a \equiv 1 \mod 3$, let
\begin{gather*} \tilde \chi^k (\mathfrak I) = \big(a - b \sqrt{-3}\big)^k.\end{gather*}
This is well-defined also for $\mathfrak I$ with even norm because in this case, by Remark~\ref{remark4.1}, the three generators of the above form differ by a power of $\zeta_3$, and the difference disappears after raising to the $k$th power. In this case, noting $\Sigma = \cup_{i \ge 0} (-2)^iS_2$ and using the multiplicativity of $\tilde \chi^k$ on~$\Sigma$, we have
\begin{gather}\label{eq:r=01}
\sum_{\mathfrak I \in \Sigma} \tilde \chi^k(\mathfrak I)N(\mathfrak I)^{-s}=\frac 1{1-(-2)^k 4^{-s}}\sum_{\mathfrak I \in S_2} \tilde \chi^k(\mathfrak I)N(\mathfrak I)^{-s} = L\big(\chi^k, s - k/2\big).
\end{gather}

To continue, note that the Fourier expansion of $f_{36}$ given in {(\ref{f36})} can be extended to
\begin{gather*}
f_{36}(\tau)=\eta(6\tau)^4=\sum_{m, n \in \Z} \big(3m+1 - n \sqrt{-3}\big)q^{(3m+1)^2 + 3n^2} = \sum_{m, n \in \Z} (3m+1)q^{(3m+1)^2 + 3n^2} .
\end{gather*}
The extra terms come from $m$ and $n$ having opposite parity. In this case $x= 3m+1 - n \sqrt{-3}$ generates an ideal in $\Sigma$ with even norm $(3m+1)^2 + 3n^2$, and by Remark~\ref{remark4.2}, $x \zeta_3$ and $x \zeta_3^2$ are the two other elements of this form and with the same norm. Since these three elements sum to zero, no additional contributions result.

Now consider the following generalization to $k \ge 1$:
\begin{gather*}\sum_{\mathfrak J\in S_2} \tilde \chi^k(\mathfrak J) q^{N(\mathfrak J)} = \sum_{m,n\in \Z\atop{m \equiv n \mod 2}} \big(3m+1-n\sqrt{-3}\big)^k q^{(3m+1)^2+3n^2},\end{gather*}
which is a partial sum of
\begin{gather*}G_k(q):= \sum_{m,n\in \Z} \big(3m+1- n\sqrt{-3}\big)^k q^{(3m+1)^2+3n^2},\end{gather*}
whose associated Dirichlet $L$-series is \begin{gather*}L(G_k(q),s)=\sum_{m,n\in \Z} \big(3m+1-n\sqrt{-3}\big)^k \big((3m+1)^2+3n^2\big)^{-s}.\end{gather*}
Comparing these two $q$-series, we note that, just like the argument for $f_{36}$ above, each extra term in~$G_k(q)$ from $m, n$ with opposite parity will have $x = 3m+1 - n \sqrt{-3}$ generate an ideal in~$\Sigma$ with even norm $(3m+1)^2 + 3n^2$, and $x\zeta_3$ and $x \zeta_3^2$ are precisely the other two elements of the same form and with the same norm. Together they contribute $x^k + (x\zeta_3)^k + \big(x\zeta_3^2\big)^k$ to the coefficient of $q^{(3m+1)^2+3n^2}$ in~$G_k(q)$, which is zero if $k$ is not a multiple of $3$, and $3\big(3m+1-n\sqrt{-3}\big)^k$ otherwise. In conclusion, we have shown
\begin{itemize}\itemsep=0pt
\item When $k$ is not a multiple of $3$,
\begin{gather}
G_k(q)=\label{eq:r<>0}\sum_{\mathfrak J\in S_2} \tilde \chi^k(\mathfrak J) q^{N(\mathfrak J)}.\end{gather}

\item When $k\equiv 0\mod 3$, using the extended $\tilde \chi^k$ on $\Sigma = S_2 \cup (-2)\Sigma$, we can write
\begin{align}G_k(q)& =\sum_{\mathfrak J\in S_2} \tilde \chi^k(\mathfrak J) q^{N(\mathfrak J)}+3 \sum_{\mathfrak J\in \Sigma} \tilde \chi^k((-2)\mathfrak J) q^{N((-2)\mathfrak J)}\nonumber\\
& =\sum_{\mathfrak J\in S_2} \tilde \chi^k(\mathfrak J) q^{N(\mathfrak J)}+3(-2)^k \sum_{\mathfrak J\in \Sigma} \tilde \chi^k(\mathfrak J) q^{4N(\mathfrak J)}.\label{eq:r=0}
\end{align}
\end{itemize}

Combining \eqref{eq:r=01}, \eqref{eq:r<>0}, \eqref{eq:r=0}, and \eqref{eq:chi-N} leads to the following relation between $L\big(\chi^k,k/2\big)$ and $L( G_k(q),k)$:
\begin{gather}\label{eq:chi-L}
L\big(\chi^k, k/2\big)=L( G_k(q),k)\cdot \begin{cases}1&\text{if } k\equiv \pm 1 \mod 6,\\
\dfrac1{1-(-3)^{k/2}3^{-k}}&\text{if } k\equiv \pm 2 \mod 6,\\
\dfrac1{1+2(-2)^k4^{-k}} &\text{if } k\equiv 3 \mod 6,\\ \dfrac1{(1-(-3)^{k/2}3^{-k})(1+2(-2)^k4^{-k})} &\text{if } k\equiv 0 \mod 6.
\end{cases}
\end{gather}
Thus for $k=2$, the constant $\frac 1{1-(-3)^{k/2}3^{-k}}$ is $\frac 34$; when $k=3$, the factor $(1+2(-2)^k4^{-k})^{-1}=\frac43$.

\begin{Remark} As noted in Section~\ref{ss:idele}, corresponding to each $\chi^k$ with $k \ge 1$ there is a modular form $h_{k+1} = \sum\limits_{n \ge 0} a_{k+1}(n)q^n$ of weight $k+1$ such that $L(h_{k+1}, s) = L\big(\chi^k, s - \frac{k}{2}\big)$. The function~$G_k(q)$ defined above is obtained from $h_{k+1}$ by removing the Fourier coefficients $a_{k+1}(n)$ with~$n$ not coprime to~$6$. So it is also a modular form of weight $k+1$. When $k \equiv \pm 1 \mod 6$, it is equal to $h_{k+1}$, and for other~$k$ its level is higher than that of $h_{k+1}$.
\end{Remark}

The relations between $L\big(\chi^k,k/2\big)$ and $L(G_k(q),k)$ described in \eqref{eq:chi-L} are useful for determining the exact values of $L\big(\chi^k,k/2\big)$ since $L(G_k(q),k)$ can be computed using CM values of explicit Eisenstein series.

The Mellin transform of $f_{36}({\rm i}t)$, defined by
\begin{gather*}
(2\pi)^{-s}\G(s)L(f_{36},s)=\int_0^\infty f_{36}({\rm i}t) t^s \frac{{\rm d}t}{t}
\end{gather*}for $\Re(s) > 3/2$, can be analytically extended to the whole $s$-plane. Using this definition we computed the value $L(f_{36}, 1)= L\big(\eta(6 \tau)^4, 1\big)$ in Lemma~\ref{lem:Lvalue-36}. On the other hand, for $\Re (s)>3/2$, $L(f_{36},s)$ agrees with the following absolutely convergent Dirichlet $L$-series
\begin{gather*} L(f_{36}, s) = \sum_{m,n\in \Z} \frac{(3m+1)-n\sqrt{-3}}{\big((3m+1)^2+3n^2\big)^s} = \sum_{m,n\in \Z} \!\frac{1}{\big((3m+1)\! + n \sqrt{-3}\big)\big|(3m+1)\! + n \sqrt{-3}\big|^{2s-2}}.
\end{gather*}

Recall the Eisenstein series
\begin{gather*}
\mathcal G_{1,(1:3)}^\ast(s,\tau)=\Im(\tau)^s\sum_{m,n\in\Z}\frac{1}{\((3m+1)\tau+n\)}\frac{1}{|(3m+1)\tau+n|^{2s}}
\end{gather*}
defined in Section~\ref{ss:2.5}, which has analytic continuation to $s=0$ for a fixed $\tau$. By taking $\tau$ to be $\frac{-1}{\sqrt{-3}} ={\rm i}\sqrt 3 $, we find, for $\Re (s) > 2$,
\begin{gather*}
\mathcal G_{1,(1:3)}^\ast\(s,\frac{-1}{\sqrt{-3}}\) = \(\frac{-1}{\sqrt{-3}}\)^{-1-s}\sum_{m,n\in \Z}\frac{1}{\big((3m+1) + n \sqrt{-3}\big)\big|(3m+1) + n \sqrt{-3}\big|^{2s}}\\
\hphantom{\mathcal G_{1,(1:3)}^\ast\(s,\frac{-1}{\sqrt{-3}}\)}{} = \(\frac{-1}{\sqrt{-3}}\)^{-1-s}L(f_{36}, s+1).
\end{gather*} This gives another expression of the analytic continuation of $L(f_{36}, s)$ and, by letting $s \to 0^+$, we obtain the relation
\begin{gather}\label{eq:43}
L(f_{36}, 1) = \(\frac{-1}{\sqrt{-3}}\) \mathcal G_{1,(1;3)}^\ast \big({-}1/\sqrt{-3}\big),
\end{gather} which gives another way to compute the value $L(f_{36}, 1)$.

When $k\ge 2$, we have, for $\Re(s)>k$,
\begin{align*}
L(G_k(q),s)&=\sum_{m,n\in \Z}\frac{\big((3m+1)-n\sqrt{-3}\big)^k}{\big((3m+1)^2+3n^2\big)^s} \\
& = \sum_{m,n\in \Z}\frac{1}{\big((3m+1) + n \sqrt{-3}\big)^k\big|(3m+1) + n \sqrt{-3}\big|^{2s-2k}}\\
&= \(\frac{-1}{\sqrt{-3}}\)^{k}\(\frac{1}{\sqrt{3}}\)^{2s-2k}\sum_{m,n\in \Z}\frac1{\big((3m+1)\frac{-1}{\sqrt{-3}}+ n\big)^k\big|(3m+1)\frac{-1}{\sqrt{-3}} + n\big|^{2s-2k}}.
\end{align*}
This leads to a uniform expression for the $L$-value of $G_k$ at $k$ for $k \ge 1$:
\begin{gather*}L(G_k(q),k)= \(\frac{-1}{\sqrt{-3}}\)^k \mathcal G_{k,(1;3)}^\ast \big({-}1/\sqrt{-3}\big),\end{gather*} where
\begin{gather*}
\mathcal G_{k,(1;3)}^\ast(\tau):=\lim_{\Re(s)>0 \atop s\rightarrow 0} \sum_{(m,n)\in \Z}\frac1{((3m+1)\tau+ n)^k}\frac{\Im(\tau)^s}{|(3m+1)\tau+ n|^{2s}}
\end{gather*} is defined in Section~\ref{ss:2.5}. Notice that when $k\ge 3$, $\mathcal G_{k,(1;3)}^\ast(\tau) = \mathcal G_{k,(1;3)}(\tau)$ is a holomorphic Eisenstein series, and the double series converges absolutely so that there is no need to take limit to get the value $L(G_k(q),k)$.

The Fourier expansions of $\mathcal G_{k,(1;3)}^\ast(\tau)$ below are derived from Theorem~\ref{thm:G*} in Section~\ref{ss:2.5}.
\begin{Proposition}\label{prop:5.1}
\begin{gather*}
\mathcal G_{1,(1;3)}^\ast(\tau)=- \frac {{\rm i}\pi}{3}-2\pi {\rm i}\sum_{n\ge 1}\frac{q^{n}-q^{2n}}{1-q^{3n}},\\
\mathcal G_{2,(1;3)}^\ast(\tau)=- \frac {\pi}{3\Im(\tau)}-(2\pi)^2\sum_{n\ge 1}n\frac{q^{n}+q^{2n}}{1-q^{3n}},
\end{gather*}
and for $k\ge3$,
\begin{gather*}
\mathcal G_{k,(1;3)}^\ast(\tau)= \frac{(-2\pi {\rm i})^k}{(k-1)!}\sum_{n\ge 1} n^{k-1}\left( \frac{q^{n}+(-1)^kq^{2n}}{1-q^{3n}}\right).
\end{gather*}
\end{Proposition}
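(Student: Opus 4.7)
The plan is to derive Proposition~\ref{prop:5.1} as a direct specialization of Theorem~\ref{thm:G*} to $(a,N) = (1,3)$, a fact already foreshadowed by the sentence preceding the statement. Recall that the excerpt defines
\begin{gather*}
\mathcal G_{k,(a;N)}^\ast(\tau) = \sum_{m,n \in \Z} \frac{1}{\((Nm+a)\tau+n\)^k},
\end{gather*}
interpreted in the regularized $s \to 0^+$ sense for $k = 1, 2$. Specializing $(a,N) = (1,3)$ reproduces exactly the series $\sum_{m,n}1/((3m+1)\tau+n)^k$ underlying the proposition. So the content of the proposition is really just bookkeeping on the indices appearing in the three cases of Theorem~\ref{thm:G*}.

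Concretely, for $k = 1$ I would substitute $a = 1$, $N = 3$ in the first formula of Theorem~\ref{thm:G*}: the constant term becomes $-{\rm i}\pi(1 - 2/3) = -{\rm i}\pi/3$, and since $N - a = 2$ the exponents of $q$ in the sum are $na$ and $n(N-a) = 2n$, matching the asserted expansion. For $k = 2$, the constant term $-\pi/(N\Im \tau)$ collapses to $-\pi/(3\Im \tau)$, and the exponents again become $n$ and $2n$. For $k \ge 3$, the holomorphic series of Theorem~\ref{thm:G*} specializes directly, with the $\pm q^{n(N-a)}$ sign convention (determined by the parity of $k$) already built into the general statement.

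Since the substantive analytic ingredients---the Lipschitz summation formula (Proposition of Cohen--Str\"omberg quoted in the text), the meromorphic continuation of $\mathcal G_{k,(a_1,a_2;N)}^\ast(s,\tau)$ in $s$, the evaluation $W_k(z;0)$ for $\Im(z) > 0$, and the residue of the Hurwitz zeta function---are all already carried out in the proof of Theorem~\ref{thm:G*}, there is no remaining analytic obstacle here. The only small point requiring verification is that the symmetric form of the sums (with $+q^{n(N-a)}$ for even $k$ and $-q^{n(N-a)}$ for odd $k$) really does match the specialization $a = 1$, $N - a = 2$; this is immediate. Thus the proof reduces to writing out the three substitutions side by side.
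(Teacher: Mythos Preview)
Your proposal is correct and matches the paper's own approach: the paper does not give a separate proof but simply states that the Fourier expansions are derived from Theorem~\ref{thm:G*}, which is exactly the specialization $(a,N)=(1,3)$ you carry out. Your verification of the constant terms and the exponents $na=n$, $n(N-a)=2n$ is all that is needed.
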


From \cite[Table~4]{Sebbar} by Sebbar, one knows that the graded algebra of holomorphic integral weight modular forms for $\G(3)$ is a polynomial algebra generated by \begin{gather*}\phi(\tau):=\theta_2(2\tau)\theta_2(6\tau)+\theta_3(2\tau)\theta_3(6\tau), \qquad \text{and} \qquad \phi_1(\tau):=\frac 16(\phi(\tau/3)-\phi(\tau)).\end{gather*} By comparing Fourier expansions, we conclude from the above Proposition that ${{\mathcal G_{1,(1;3)}^\ast}}=-2\pi {\rm i}\phi/6$, ${{\mathcal G_{3,(1;3)}^\ast}}=-\frac{(2\pi {\rm i})^3}{2}\phi_1^3$, and ${{\mathcal G_{4,(1;3)}^\ast}}= \frac{(2\pi {\rm i})^4}{6}\phi\phi_1^3$. Note that $\mathcal G_{2,(1;3)}^\ast(\tau)$ is not holomorphic, and we have
\begin{gather}\label{eqn:E2G2}
\mathcal G_{2,(1;3)}^\ast(\tau)=\frac{\pi^2}{3}\(E_2^\ast(3\tau)-\phi^2(\tau)\),
\end{gather}
where
\begin{gather*}
E_2^\ast(\tau)=1-\frac{3}{\pi \Im(\tau)}-24\sum_{n> 0}n\frac{q^{n}}{1-q^{n}}
\end{gather*}
is the non-holomorphic weight-$2$ Eisenstein series. See~\cite{Zagier} for more details.

\begin{Theorem} \label{thm: phi values}Let $\tau_0=\frac{\rm i}{\sqrt{3}}$. Then
\begin{gather*}
\phi(\tau_0)= \frac{\sqrt{3}}{2\pi}\frac1{2^{1/3}}B(1/3,1/3) \qquad \text{and} \qquad
\phi_1(\tau_0)= \frac1{2^{2/3}\cdot 3^{1/2}}\frac{1}{2\pi}B(1/3,1/3).\end{gather*}
\end{Theorem}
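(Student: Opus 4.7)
The plan is to read off both values from the Eisenstein series identifications $\mathcal G^*_{1,(1;3)} = -2\pi i \phi/6$ and $\mathcal G^*_{3,(1;3)} = -(2\pi i)^3\phi_1^3/2$ established just above the statement, evaluated at the CM point $\tau_0 = i/\sqrt 3 = -1/\sqrt{-3}$. The bridge between Eisenstein values at $\tau_0$ and $L$-values is the Mellin transform identity
\[\mathcal G^*_{k,(1;3)}(\tau_0) \;=\; \tau_0^{-k}\,L\bigl(G_k(q),\,k\bigr),\]
whose $k=1$ case is equation (4.3). For $k\ge 3$ the series $\mathcal G^*_{k,(1;3)}$ is holomorphic and absolutely convergent, so the identity follows from the same formal computation that produced (4.3), merely specialized at $s=0$.

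For $\phi(\tau_0)$, I would substitute $k=1$ and solve for $\phi(\tau_0)$ using $\tau_0^{-1} = -i\sqrt 3$, obtaining
\[\phi(\tau_0) = \frac{3i}{\pi}\,\mathcal G^*_{1,(1;3)}(\tau_0) = \frac{3i}{\pi}\,\tau_0^{-1}\,L(f_{36},1) = \frac{3\sqrt 3}{\pi}\,L(f_{36},1).\]
Then Lemma 4.4 ($L(f_{36},1) = \tfrac{1}{3\cdot 2^{4/3}}B(1/3,1/3)$) immediately gives the claimed expression $\phi(\tau_0)=\frac{\sqrt 3}{2\pi}\cdot \frac{1}{2^{1/3}} B(1/3,1/3)$.

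For $\phi_1(\tau_0)$, the same substitution with $k=3$ yields
\[\phi_1(\tau_0)^3 = \frac{-2}{(2\pi i)^3}\,\tau_0^{-3}\,L\bigl(G_3(q),3\bigr) = \frac{3\sqrt 3}{4\pi^3}\,L\bigl(G_3(q),3\bigr),\]
and by formula (5.6) we have $L(G_3(q),3) = \tfrac34\,L\bigl(\eta(3\tau)^8,3\bigr)$. Taking the real positive cube root (dictated by the positivity of the leading Fourier coefficient of $\phi_1$) produces $\phi_1(\tau_0)$. So the problem reduces to evaluating the weight-$4$ period $L\bigl(\eta(3\tau)^8,3\bigr)$ in closed form, independently of the as-yet-unproven second identity of Theorem 2.2.

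The main obstacle is precisely this evaluation. My plan is to adapt the hypergeometric strategy of Section 4: use the Fermat curve parametrization of $X_0(27)\cong F_3$ from Lemma 4.2, combine with the $_2F_1$ expression (2.22) for $\theta_3^2$ and Lemma 2.10 for $\l'(\tau)$, and reduce the $n=2$ period integral in (1.1) to an iterated hypergeometric integral via the change of variable $u=\l$. Clausen's formula (2.25) will collapse the resulting $_3F_2$ value at $1$ to the square of a $_2F_1$ at $1$, and Gauss summation (2.20) together with the reflection and multiplication formulas (2.17), (2.19) for $\Gamma$ will turn the answer into an explicit rational-over-$\pi^{\!*}$ multiple of $B(1/3,1/3)^3$; matching this multiple with $\frac{3\sqrt 3}{4\pi^3}\cdot \frac34$ and taking cube roots yields $\phi_1(\tau_0)=\frac{1}{2^{2/3}\cdot 3^{1/2}}\cdot\frac{1}{2\pi}B(1/3,1/3)$. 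The delicate bookkeeping is the appearance of the factor $\tau^2$ inside (1.1), which I would handle by integration by parts (or, equivalently, by invoking the functional equation for $L(\eta(3\tau)^8,s)$ to trade $L(\eta(3\tau)^8,3)$ for $L(\eta(3\tau)^8,1)$, an Eichler-type integral directly amenable to the Fermat-curve computation).
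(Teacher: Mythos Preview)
Your evaluation of $\phi(\tau_0)$ is correct and in fact cleaner than the paper's route: rather than assembling $\phi(\tau_0)$ from the six separate eta values of Proposition~\ref{prop: eta values}, you invert the identity $\mathcal G^*_{1,(1;3)}=-\tfrac{2\pi i}{6}\,\phi$ at $\tau_0$ via \eqref{eq:43} and feed in $L(f_{36},1)=\tfrac{1}{3\cdot 2^{4/3}}B(1/3,1/3)$ from Lemma~\ref{lem:Lvalue-36}.

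The $\phi_1(\tau_0)$ half, however, has a genuine gap. Your reduction to an independent evaluation of $L(\eta(3\tau)^8,3)$ (equivalently $L(\eta(3\tau)^8,1)$ after the functional equation) is correct, but the promised hypergeometric step does not go through with Clausen. Carrying out the $\lambda$-parametrization for the weight-$4$ form using Lemma~\ref{lem:lambda-diff} and \eqref{eq:theta4} gives
\[
\eta(\tau)^8\,d\tau \;=\; \text{(const)}\cdot \lambda^{-1/3}(1-\lambda)^{-1/3}\,{}_2F_1\!\big(\tfrac12,\tfrac12;1;\lambda\big)^{2}\,d\lambda,
\]
so the $n=0$ period becomes $\int_0^1 \lambda^{-1/3}(1-\lambda)^{-1/3}\,{}_2F_1(\tfrac12,\tfrac12;1;\lambda)^2\,d\lambda$, which is \emph{not} a ${}_3F_2$ at $1$. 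Clausen's identity \eqref{eq:Clausen} rewrites ${}_2F_1(a,b;c;\cdot)^2$ as a ${}_3F_2$ only when $c=a+b+\tfrac12$; here $a=b=\tfrac12$ and $c=1\neq\tfrac32$, so neither direction of Clausen applies. The cubic-theta route via \eqref{eq:a-2F1} hits the same wall with ${}_2F_1(\tfrac13,\tfrac23;1;\cdot)^2$. The Section~\ref{section4} argument for $h_3$ worked precisely because the weight-$3$ integrand carried ${}_2F_1$ to the \emph{first} power, which \eqref{eq:recursive} turns into a ${}_3F_2$ that Clausen then collapses; at weight~$4$ you are one integration beyond that mechanism, and you have not supplied a replacement identity (a suitable ${}_4F_3$ evaluation, for instance).

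The paper avoids this entirely. It obtains $\phi_1(\tau_0)$ from the ratio $\phi/\phi_1 = 3+\eta(\tau/3)^3/\eta(3\tau)^3$, whose value at $\tau_0$ equals $3\cdot 2^{1/3}$, found by solving the algebraic relation between $j$ and $t=\eta(\tau/3)^3/\eta(3\tau)^3$ at the CM point $j(\tau_0)=54000$ (Proposition~\ref{prop: eta values}). Combined with $\phi(\tau_0)$ this immediately yields $\phi_1(\tau_0)$. Note also that in the paper's logical order Theorem~\ref{thm: phi values} is the \emph{input} to the second identity of Theorem~\ref{thm:2}, so your reversed flow is legitimate only if the independent evaluation of $L(\eta(3\tau)^8,3)$ is actually completed.
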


Before proving this theorem, we use it to establish the second identity in Theorem~\ref{thm:2}, relating $L(h_4, 3)$ to the cube of $L(f_{36}, 1)$.
\begin{Corollary}
\begin{gather*}
L(f_{36},1)=-2\pi {\rm i} \frac{\rm i}{\sqrt 3}\frac{\phi(\tau_0)}{6}=\frac1{2^{4/3}\cdot 3}B(1/3,1/3),
\end{gather*}
and
\begin{gather*}
L(h_4, 3)= L\big(\eta(3\tau)^8,3\big)=\frac 83 L(f_{36},1)^3.
\end{gather*}
\end{Corollary}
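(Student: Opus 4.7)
The plan is to read off both halves of the corollary directly from the identifications made in Section~5, using \eqref{eq:43} and \eqref{eq:chi-L} to pass from $L$-values of modular forms to CM values of the Eisenstein series $\mathcal{G}_{k,(1;3)}^\ast$, and then substituting the explicit evaluations of $\phi(\tau_0)$ and $\phi_1(\tau_0)$ supplied by Theorem~\ref{thm: phi values}. Throughout write $\tau_0 = {\rm i}/\sqrt{3} = -1/\sqrt{-3}$.

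\emph{Step 1: the first identity.} By \eqref{eq:43},
\begin{gather*}
L(f_{36},1) \;=\; \tau_0 \, \mathcal{G}_{1,(1;3)}^\ast(\tau_0).
\end{gather*}
Comparing Fourier expansions in Proposition~\ref{prop:5.1} with the definition of $\phi(\tau)$ gives $\mathcal{G}_{1,(1;3)}^\ast = -2\pi {\rm i}\,\phi/6$, so
\begin{gather*}
L(f_{36},1) \;=\; -2\pi {\rm i}\,\frac{{\rm i}}{\sqrt{3}}\,\frac{\phi(\tau_0)}{6}.
\end{gather*}
Substituting $\phi(\tau_0) = \frac{\sqrt{3}}{2\pi}\cdot\frac{1}{2^{1/3}}B(1/3,1/3)$ from Theorem~\ref{thm: phi values}, the factors of $\sqrt{3}$ and $2\pi$ cancel and one obtains $L(f_{36},1) = \frac{1}{2^{4/3}\cdot 3}B(1/3,1/3)$, proving the first assertion.

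\emph{Step 2: evaluation of $L(h_4,3)$.} Recall $L(h_4,s) = L(\chi^3, s-3/2)$, so $L(h_4,3)=L(\chi^3,3/2)$. The case $k=3$ of \eqref{eq:chi-L} reads $L(\chi^3,3/2) = \tfrac{4}{3}\,L(G_3(q),3)$. On the other hand, the uniform formula $L(G_k(q),k) = \tau_0^k\,\mathcal{G}_{k,(1;3)}^\ast(\tau_0)$ together with the identification $\mathcal{G}_{3,(1;3)}^\ast = -\frac{(2\pi {\rm i})^3}{2}\phi_1^3 = 4\pi^3{\rm i}\,\phi_1^3$ gives
\begin{gather*}
L(G_3(q),3) \;=\; \Bigl(\frac{{\rm i}}{\sqrt{3}}\Bigr)^{3}\cdot 4\pi^3{\rm i}\,\phi_1(\tau_0)^3 \;=\; \frac{4\pi^3}{3\sqrt{3}}\,\phi_1(\tau_0)^3.
\end{gather*}
Now plug in $\phi_1(\tau_0) = \frac{1}{2^{2/3}\cdot 3^{1/2}}\cdot\frac{1}{2\pi}B(1/3,1/3)$ from Theorem~\ref{thm: phi values}; the factor $\pi^3$ cancels against $(2\pi)^3$ in the denominator, and the powers of $3$ collapse via $3\sqrt{3}\cdot 3^{3/2}=27$, producing $L(G_3(q),3) = \frac{1}{216}B(1/3,1/3)^3$. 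Consequently
\begin{gather*}
L(h_4,3) \;=\; \frac{4}{3}\cdot\frac{1}{216}\,B(1/3,1/3)^3 \;=\; \frac{1}{162}\,B(1/3,1/3)^3.
\end{gather*}

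\emph{Step 3: the ratio.} Cubing the formula $L(f_{36},1) = \frac{1}{2^{4/3}\cdot 3}B(1/3,1/3)$ from Step~1 yields $L(f_{36},1)^3 = \frac{1}{2^{4}\cdot 27}B(1/3,1/3)^3 = \frac{1}{432}B(1/3,1/3)^3$, and the comparison $\frac{1}{162} = \frac{8}{3}\cdot\frac{1}{432}$ gives $L(h_4,3)=\frac{8}{3}L(f_{36},1)^3$, as claimed. The only delicate point is the bookkeeping of the auxiliary Euler factors in \eqref{eq:chi-L} at the ramified primes $v_2$ and $v_3$ and the powers of $2\pi {\rm i}$ and $\sqrt{3}$ arising from the evaluation at $\tau_0$; once these are handled, the rest is mechanical substitution of the closed-form values from Theorem~\ref{thm: phi values}.
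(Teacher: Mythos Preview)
Your proposal is correct and follows essentially the same route as the paper's own proof: use \eqref{eq:43} together with $\mathcal G_{1,(1;3)}^\ast=-2\pi{\rm i}\,\phi/6$ for the first identity, and for the second use $L(h_4,3)=L(\chi^3,3/2)=\tfrac43 L(G_3(q),3)=\tau_0^{3}\,\tfrac43\,\mathcal G_{3,(1;3)}^\ast(\tau_0)$ with $\mathcal G_{3,(1;3)}^\ast=-\tfrac{(2\pi{\rm i})^3}{2}\phi_1^3$, then substitute the values from Theorem~\ref{thm: phi values}. The only cosmetic difference is that the paper goes directly from $\tfrac43\cdot\tfrac12\bigl(\tfrac{2\pi}{\sqrt3}\bigr)^3\phi_1(\tau_0)^3$ to $\tfrac83 L(f_{36},1)^3$ using the ratio $\phi(\tau_0)/\phi_1(\tau_0)=3\cdot 2^{1/3}$, whereas you expand both sides to explicit multiples of $B(1/3,1/3)^3$ before comparing; the arithmetic is the same.
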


\begin{proof}As remarked above,
\begin{gather*}
\mathcal G_{1,(1;3)}^\ast(\tau)=-2\pi {\rm i}\phi(\tau)/6.
\end{gather*}
Thus by \eqref{eq:43}
\begin{gather*}
L(f_{36},1)=-2\pi {\rm i} \frac{\rm i}{\sqrt 3}\frac{\phi(\tau_0)}{6}=\frac1{2^{4/3}\cdot 3}B(1/3,1/3),
\end{gather*}
which agrees with Lemma \ref{lem:Lvalue-36}. Further,
\begin{align*}
L\big(\eta(3\tau)^8,3\big)& =L\big(\chi^3,3/2\big)= \frac 43 L(G_3(q), 3) = \frac 43 \frac{-{\rm i}}{3^{3/2}}\mathcal G_{3,(1;3)}^\ast (\tau_0) \\
& = \frac 43 \frac12\(\frac{2\pi}{\sqrt{3}}\)^3\phi_1(\tau_0)^3 = \frac 83 L(f_{36},1)^3.
\end{align*}
This proves the corollary.
\end{proof}

The calculation before Proposition \ref{prop:5.1} shows that
\begin{gather*}
- \frac 14 \mathcal G_{2,(1;3)}^\ast(\tau_0) =\frac 34L(G_2(q), 2) = L\big(\eta(2\tau)^3\eta(6\tau)^3,2\big)=\frac 32 L(f_{36},1)^2,
\end{gather*}
where the last equality was proved in the previous section. Combining Theorems \ref{thm:2} and \ref{thm: phi values} with the identity~\eqref{eqn:E2G2}, we obtain, as a consequence, the values
\begin{gather*}
E_2^\ast\big(\sqrt{-3}\big)=2^{-8/3}\frac{B(1/3,1/3)^2}{\pi^2},\qquad \text{and} \qquad E_2\big(\sqrt{-3}\big)=\frac{\sqrt{3}}{\pi}+2^{-8/3}\frac{B(1/3,1/3)^2}{\pi^2}.
\end{gather*}

Now we turn to the proof of Theorem \ref{thm: phi values}, for which we use the expression
\begin{gather*}
\phi(\tau)=4\frac{\eta(4\tau)^2\eta(12\tau)^2}{\eta(2\tau)\eta(6\tau)}+\frac{\eta(2\tau)^5\eta(6\tau)^5}{\eta(\tau)^2\eta(4\tau)^2\eta(3\tau)^2\eta(12\tau)^2},
\end{gather*}
and the fact
\begin{gather*}
\phi(\tau)=\(\frac{\eta(\tau/3)^3}{\eta(3\tau)^3}+3\)\phi_1(\tau).
\end{gather*} Hence, the values of $\phi(\tau_0)$ and $\phi_1(\tau_0)$ can be determined by the following $\eta$-values and the value of
\begin{gather*}t :=\phi(\tau)/\phi_1(\tau)-3=\eta(\tau/3)^3/\eta(3\tau)^3.\end{gather*}

\begin{Proposition}\label{prop: eta values}
Let $\tau_0=\frac{\rm i}{\sqrt{3}}$. Then we have
\begin{gather*}
t\(\tau_0\)=3\big(2^{1/3}-1\big),
\end{gather*}
and the $\eta$-values
\begin{gather*}
\eta(\tau_0)=\frac{{3^{1/4}}}{2^{1/3}}\frac{3^{1/8}}{2\pi}\G(1/3)^{3/2}=\frac{3^{1/4}B(1/3,1/3)^{1/2}}{2^{1/3}\sqrt{2\pi}},\\
\eta(2\tau_0)=\frac{\big(104+60\sqrt3\big)^{1/24}}{\sqrt 2}\eta(\tau_0),\\
\eta(3\tau_0)=3^{-1/4}\eta(\tau_0),\\
\eta(4\tau_0)=\frac{\big(1980\sqrt 2-4544+1980\sqrt 6-1440\sqrt3\big)^{1/24}}{2}\eta(\tau_0),\\
\eta(6\tau_0)=\frac{\big(3\sqrt3-5\big)^{1/12}}{2^{11/24}}\eta(3\tau_0),\\
\eta(12\tau_0)= \frac{\big(495\sqrt6-495\sqrt2+360\sqrt3-1136\big)^{1/24}}{2^{11/12}}\eta(3\tau_0).
\end{gather*}
\end{Proposition}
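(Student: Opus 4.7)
The plan is to exploit the fact that $\tau_0 = i/\sqrt{3}$ is a singular modulus in the CM field $K = \Q(\sqrt{-3})$ (indeed $3\tau_0^2 = -1$), so every $\eta$-value at $n\tau_0$ is a transcendental multiple of $b_K$ times an algebraic number, and the algebraic part is determined by modular equations. The base case is $\eta(\tau_0)$, which comes directly from the Chowla--Selberg formula \eqref{eq:bk} specialized to $K = \Q(\sqrt{-3})$: since this field has class number $1$, discriminant $-3$, and $6$ units, the formula yields $\eta(\tau_0)^4$ as an explicit constant times $\Gamma(1/3)^3$, and one reconciles this with the stated expression using the reflection identity $\Gamma(1/3)\Gamma(2/3) = 2\pi/\sqrt 3$ and the relation $B(1/3,1/3) = \sqrt 3\,\Gamma(1/3)^3/(2\pi)$.

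The value $\eta(3\tau_0)$ is then free: because $-1/(3\tau_0) = \tau_0$ (i.e., $\tau_0$ is fixed by the Fricke involution $w_3$), the transformation $\eta(-1/\tau) = \sqrt{-i\tau}\,\eta(\tau)$ evaluated at $\tau = \tau_0$ gives $\eta(3\tau_0) = \eta(-1/\tau_0) = \sqrt{-i\tau_0}\,\eta(\tau_0) = 3^{-1/4}\eta(\tau_0)$. For $t(\tau_0)$, I would use the identity $t(\tau) = 3(a(\tau)/c(\tau) - 1)$, obtained by substituting $\tau \mapsto \tau/3$ in the formula $a(3\tau)/c(3\tau) = \tfrac13 \eta(\tau)^3/\eta(9\tau)^3 + 1$ recalled in Section~\ref{ss:mf}, and then evaluate $a(\tau_0)/c(\tau_0)$ directly from the theta-series definitions of $a, c$ at this CM point, or equivalently read it off as the root at $\tau_0$ of the modular equation of level $9$ satisfied by $a/c$; the asserted value $3(2^{1/3}-1)$ is the unique positive root matching the $q$-expansion of $t$.

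The remaining values $\eta(n\tau_0)$ for $n \in \{2,4,6,12\}$ are obtained by evaluating suitable $\eta$-quotients that are modular units for $\Gamma_0(m)$ (with $m \in \{12, 24\}$) at the CM point $\tau_0$. Concretely, I would choose ratios such as $\eta(2\tau)^{24}/\eta(\tau)^{24}$, $\eta(6\tau)^{12}/\eta(3\tau)^{12}$, and so on, whose singular values at $\tau_0$ are algebraic numbers of controllable degree; the nested radicals $(104 + 60\sqrt{3})^{1/24}$, $(3\sqrt 3 - 5)^{1/12}$, etc.\ appearing in the proposition are precisely these singular values, normalized so that the $24$th-root is real positive. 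Additional Fricke involutions ($w_3$ on $\Gamma_0(12)$ relating $2\tau_0$ and $6\tau_0$, $w_4$ on $\Gamma_0(12)$ relating $3\tau_0$ and $12\tau_0$, and $w_{12}$ on $\Gamma_0(12)$ — noting $-1/(12\tau_0) = \tau_0/4$) give linear relations among the $\eta$-values and cut in half the number of modular equations one must solve directly.

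The main obstacle is not conceptual but technical: identifying for each $n \in \{2,4,6,12\}$ a modular unit whose singular value at $\tau_0$ admits a compact closed form, solving the resulting modular polynomial to extract the correct algebraic root among several Galois conjugates, and tracking the $24$th-root branch consistent with the holomorphic choice of $\eta$ on the upper half plane. Once the correct roots have been isolated — for instance by comparing numerical $q$-expansions to several digits — each asserted identity reduces to an algebraic verification, and combined with the closed form of $\eta(\tau_0)$ and the Fricke identity $\eta(3\tau_0) = 3^{-1/4}\eta(\tau_0)$, the full list of $\eta$-values follows.
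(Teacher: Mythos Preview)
Your outline is correct and close in spirit to the paper's argument, but the paper makes sharper concrete choices where you stay generic. Rather than invoking Chowla--Selberg directly at $\tau_0$ (a CM point for the non-maximal order $\Z[\sqrt{-3}]$, where the standard formula needs adjustment), the paper starts from the known value $\eta(\zeta_3)= {\rm e}^{-\pi {\rm i}/24}\tfrac{3^{1/8}}{2\pi}\G(1/3)^{3/2}$ at the maximal-order point and uses $u(\zeta_3)=(\eta(2\zeta_3)/\eta(\zeta_3))^{24}=-1/256$ to obtain $\eta(\sqrt{-3})=\eta(3\tau_0)$, then recovers $\eta(\tau_0)$ by the functional equation---your Fricke step, run in reverse. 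For the remaining values, instead of assorted modular units on $\G_0(12)$ or $\G_0(24)$, the paper uses only the single $\G_0(2)$ Hauptmodul $u(\tau)=(\eta(2\tau)/\eta(\tau))^{24}$ together with the classical level-$2$ modular polynomial $\Phi_2(X,Y)$: from $j(\zeta_3)=0$ one gets $j(\sqrt{-3})=54000$, and iterating $\Phi_2$ yields $j$ at $2\sqrt{-3}$, $\sqrt{-3}/2$, $\sqrt{-3}/4$; each $j$-value is converted to a $u$-value via the explicit cubic $j=(1+256u)^3/u$, and the $u$-values give the required $\eta$-ratios (with one more use of $\eta(-1/\tau)=\sqrt{-{\rm i}\tau}\,\eta(\tau)$ for $2\tau_0$ and $4\tau_0$). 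For $t(\tau_0)$ the paper likewise stays with $j$, using the rational relation $j=\frac{(t+3)^3(t+9)^3(t^2+27)^3}{t^3(t^2+9t+27)^3}$ at $j(\tau_0)=54000$ in place of your level-$9$ route through $a/c$. Your plan would work, but the paper's choices keep every algebraic equation a cubic in one variable and avoid the higher-level modular units entirely.
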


\begin{proof}Let \begin{gather*} u(\tau)=(\eta(2\tau)/\eta(\tau))^{24}\end{gather*} be a Hauptmodul of $\G_0(2)$. {Up to equivalence, the two cusps of $X_0(2)$ are $0$, ${\rm i}\infty$ and the only elliptic point of $X_0(2)$ is $\frac{1+{\rm i}}2$ of order $2$.} The function $u(\tau)$ has a zero of order 1 at the cusp~${\rm i}\infty$ and a pole of order 2 at the cusp~$0$. The ramification data of the covering map from the modular curve~$X_0(2)$ to $X_0(1)$ tells us that {the relation between the modular $j$-function and $u$} must be of the form
\begin{gather*}
j=\frac{(u+a)^3}{Au} \quad \text{for some constants} \ \ a, \ A,
\end{gather*}
since the elliptic point ${\zeta_3}=\big({-}1+\sqrt{-3}\big)/2$ of $\Gamma_0(1)$ is a ramification point of order $3$ of the covering map. In addition, the elliptic point~${\rm i}$ of $\Gamma_0(1)$ is a ramification point of order~$2$.
Using the known values
\begin{gather*}
\eta({\rm i})=\frac{\G(1/4)}{2\pi^{3/4}}, \qquad \eta(2{\rm i})=\frac1{\sqrt 2}\eta({\rm i}/2)=\frac{\G(1/4)}{2^{11/8}\pi^{3/4}},
\end{gather*}
we obtain that $u({\rm i})=1/512$. (For these $\eta$-values, see \cite[Theorem~10.5.13-Examples]{Cohen-ant}.) On the other hand, the ramification data gives that $u((1+{\rm i})/2)=-1/64$ and $u({\zeta_3})=-1/256$. Thus
the relation between the elliptic $j$-function and $u$ is
\begin{gather}\label{eq:j-u}
j= \frac{(1+256u)^3}{u}.
\end{gather}

To evaluate the $\eta$-values, we will first consider the corresponding $j$-values, $u$-values, and also use the functional equation $\eta(-1/\tau)=\sqrt{\tau/{\rm i}}\eta(\tau)$. For example, for the values $\eta\big(\sqrt{-3}\big)$ and $\eta\big({\rm i}/\sqrt{3}\big)$, we use the known value
\begin{gather*}
\eta\(\frac{-1+\sqrt{-3}}2\)={\rm e}^{-\pi {\rm i}/24}\frac{3^{1/8}}{2\pi}\G(1/3)^{3/2},
\end{gather*}
and $u({\zeta_3})=-1/256$. Then we have
\begin{gather*}
\frac{\eta\big(\sqrt{-3}\big)^{24}}{\eta({\zeta_3})^{24}}=\frac{\eta(2{\zeta_3})^{24}}{\eta({\zeta_3})^{24}}= -\frac 1{256}.
\end{gather*}
We note that $\eta({\rm i}y)\in \mathbb R^+$ for $y\in \mathbb R^+$. Thus we have
\begin{gather*}
\eta\big(\sqrt{-3}\big)=\frac1{2^{1/3}}\frac{3^{1/8}}{2\pi}\G(1/3)^{3/2} \qquad \text{and} \qquad \eta\big({\rm i}/\sqrt{3}\big)=3^{1/4}\eta\big(\sqrt{-3}\big)=\frac{3^{1/4}}{2^{1/3}}\frac{3^{1/8}}{2\pi}\G(1/3)^{3/2}.
\end{gather*}

We will get the other $\eta$-values in a likewise manner using modular polynomials. Recall that the level 2 modular polynomial
\begin{gather*}
\Phi_2(X,Y)= X^3+Y^3-X^2Y^2+1488 \big(XY^2+X^2Y\big)-162000\big(X^2+Y^2\big)\\
\hphantom{\Phi_2(X,Y)=}{} +40773375XY+8748000000(X+Y)-157464000000000
\end{gather*}
is satisfied by $j(\tau)$ and $j(2\tau)$, see \cite[Section~6.1]{Zagier}. As $j({\zeta_3})=0$, we see that
\begin{gather*}
\Phi_2(X,0)=(X-54000)^3
\end{gather*}
and hence $j\big(\sqrt{-3}\big)=54000$. This value and the polynomial $\Phi_2\big(X,j\big(\sqrt{-3}\big)\big)$ allow us to determine the algebraic values $j\big(\sqrt{-3}/2\big)$ and $j\big(2\sqrt{-3}\big)$. For instance, we have
\begin{gather*}
\Phi_2(X,54000)=X\big(X^2-2835810000X+6549518250000\big).
\end{gather*}
To nail down which of the three roots is $j\big(2\sqrt{-3}\big)$, a convenient way is plugging $\tau=2\sqrt{-3}$ into the Fourier expansion of $j(\tau)$ to get an estimate of $j\big(2\sqrt{-3}\big)$. Similarly one can compute $j\big(\sqrt{-3}/4\big)$ using $\Phi_2\big(X,j\big(\sqrt{-3}/2\big)\big)$. We tabulate the $j$-values below.
\begin{gather*}\rowcolors{1}{white}{pink}
\arraycolsep=4.7pt\def\arraystretch{1.2}
\begin{array}{c|c}
\tau& j(\tau) \\ \hline\hline
\sqrt{-3}& 54000\\\hline
2\sqrt{-3}& \begin{array}{c}
1417905000+818626500\sqrt3
\end{array}\\\hline
\sqrt{-3}/2& \begin{array}{c}
1417905000-818626500\sqrt3\\
\end{array}\\\hline
\sqrt{-3}/4& \begin{array}{c}
-1160733998424384000\sqrt 3+2010450259344609000\\
+820762881440077125\sqrt 6-1421603011620136125\sqrt 2
\end{array}
\end{array}
\end{gather*}

Next, using \eqref{eq:j-u}, we compute the $u(\tau)$ values as follows.
\begin{gather*}
\rowcolors{1}{white}{pink}
\arraycolsep=4.7pt\def\arraystretch{1.2}
\begin{array}{c|cc}
\tau & u(\tau)=(\eta(2\tau)/\eta(\tau))^{24}&\\ \hline\hline
\sqrt{-3}& 13/512-15\sqrt3/1024&\\\hline
2\sqrt{-3}& \begin{array}{c}
-15\sqrt3/8-1667/512+5445\sqrt6/4096+9405\sqrt2/4096
\end{array}\\\hline
\sqrt{-3}/2&
13/2-15\sqrt3/4&\\\hline
\sqrt{-3}/4&
282645\sqrt 6/4-99930\sqrt3
-489555\sqrt2/4+173084
\end{array}
\end{gather*}

Hence we obtain that
\begin{gather*}\allowdisplaybreaks
\eta\big({-}6/\sqrt{-3}\big)=\eta(2\sqrt{-3})=\(\frac{26-15\sqrt3}{1024}\)^{1/24}\eta\big(\sqrt{-3}\big) =\frac{(3\sqrt3-5)^{1/12}}{2^{11/24}}\eta\big(\sqrt{-3}\big) ,\\
\eta\big({-}12/\sqrt{-3}\big) = \eta\big(4\sqrt{-3}\big) =\sqrt 2\big(5445 \sqrt 6+9405 \sqrt2-7680\sqrt3-13336\big)^{1/24}\eta\big(2\sqrt {-3}\big)\\
\hphantom{\eta\big({-}12/\sqrt{-3}\big)}{} = \frac{(495\sqrt6-495\sqrt2+360\sqrt3-1136)^{1/24}}{2^{11/12}}\eta\big(\sqrt{-3}\big),\\
\eta\big({-}2/\sqrt{-3}\big) =\eta\big({-}1/\big(\sqrt{-3}/2\big)\big)=\frac{3^{1/4}}{\sqrt 2}\eta\big(\sqrt{-3}/2\big)
 =\frac{3^{1/4}}{\sqrt 2}\big( 13/2-15\sqrt3/4\big)^{-1/24}\eta\big(\sqrt{-3}\big)\\
\hphantom{\eta\big({-}2/\sqrt{-3}\big)}{} =\frac{3^{1/4}}{\sqrt 2}\big(104+60\sqrt3\big)^{1/24}\eta\big(\sqrt{-3}\big), \\
\eta\big({-}4/\sqrt{-3}\big) =\eta\big({-}1/\big(\sqrt{-3}/4\big)\big)=\frac{3^{1/4}}{2}\eta\big(\sqrt{-3}/4\big)\\
\hphantom{\eta\big({-}4/\sqrt{-3}\big)}{} =\frac{3^{1/4}}2\big(282645\sqrt 6/4-99930\sqrt3 -489555\sqrt2/4+173084\big)^{-1/24}\eta\big(\sqrt{-3}/2\big)\\
\hphantom{\eta\big({-}4/\sqrt{-3}\big)}{} =\frac{3^{1/4}}{2}\big(5445\sqrt 6+7680\sqrt3-9405\sqrt2-13336\big)^{1/24}\eta\big(\sqrt{-3}/2\big) \\
\hphantom{\eta\big({-}4/\sqrt{-3}\big)}{} =\frac{3^{1/4}}{2}\big(1980\sqrt 2-4544+1980\sqrt 6-1440\sqrt3\big)^{1/24}\eta\big(\sqrt{-3}\big).
\end{gather*}

To determine the value $t\big({-}1/\sqrt {-3}\big)$, we notice that
\begin{gather}\label{eqn:jt}
j=\frac{(t+3)^3(t+9)^3\big(t^2+27\big)^3}{t^3\big(t^2+9t+27\big)^3}
\end{gather}
and $j\big({-}1/\sqrt{-3}\big)=j\big(\sqrt{-3}\big)=54000$.

This gives the information on the algebraic value $t\big({-}1/\sqrt {-3}\big)$. The relationship between~$j$ and~$t$ in~(\ref{eqn:jt}) follows Sebbar's construction in \cite[Tables~3 and~7]{Sebbar} by a variable change $t\mapsto t+3$.
\end{proof}

We end with listing the first few computed constants $C_{\chi,k}$ for the identities $L\big(\chi^k,k/2\big)=C_{\chi,k}L(\chi,1)^k$ in Table~\ref{table:1}, in which $\mathcal G_{k,(1;3)}^\ast$ is abbreviated by $\mathcal G_k^\ast$.

\begin{table}[h!]\centering
\rowcolors{1}{white}{pink}
\arraycolsep=4.7pt\def\arraystretch{1.7}
\begin{tabular}{l l l}
$ L\big(\chi^4,2\big)=\frac 92 L(\chi,1/2)^4$ & $\mathcal G_4^*= \frac{(-2\pi {\rm i})^4}{3!}\phi\phi_1^3$\\
$ L\big(\chi^5, 5/2\big)=6L(\chi,1/2)^5$& $\mathcal G_5^*= \frac{(-2\pi {\rm i})^5}{4!}\phi^2\phi_1^3$\\
$ L\big(\chi^6,3\big)= \frac{288}{35} L(\chi,1/2)^6$& $\mathcal G_6^*=\frac{(-2\pi {\rm i})^6}{5!}\big(\phi^3\phi_1^3+12\phi_1^6\big)$ \\
$ L\big(\chi^7, 7/2\big)=12L(\chi,1/2)^7$ &$ \mathcal G_7^*=\frac{(-2\pi {\rm i})^7}{6!}\big(\phi^4\phi_1^3+36\phi\phi_1^6\big)$ \\
$ L\big(\chi^8,4\big)= \frac{243}{14}L(\chi,1/2)^8$ & $\mathcal G_8^*=\frac{(-2\pi {\rm i})^8}{7!}\big(\phi^5\phi_1^3+96\phi^2\phi_1^6\big)$ \\
$ L\big(\chi^9,9/2\big)=\frac{512}{21}L(\chi,1/2)^9$ & $\mathcal G_9^*=\frac{(-2\pi {\rm i})^9}{8!}\big(\phi^6\phi_1^3+216\phi^3\phi_1^6+720\phi_1^9\big) $ \\
$ L\big(\chi^{10},5\big)=\frac{243}7L(\chi,1/2)^{10}$ & $\mathcal G_{10}^*=\frac{(-2\pi {\rm i})^{10}}{9!}\big(\phi^7\phi_1^3+468\phi^4\phi_1^6+4752\phi\phi_1^9\big) $\\
$ L\big(\chi^{11},{11}/2\big)=\frac{348}7L(\chi,1/2)^{11}$ & $\mathcal G_{11}^*= \frac{(-2\pi {\rm i})^{11}}{10!}\big(\phi^8\phi_1^3+972\phi^5\phi_1^6+22896\phi^2\phi_1^9\big)$
\end{tabular}
\caption{List of $C_{\chi,k}$.}\label{table:1}
\end{table}

It is natural to seek the arithmetic meaning of these numbers $C_{\chi,k}$. In terms of congruences among modular forms $h_k(\tau) = \sum\limits_{n\ge 1}a_{k}(n)q^n$ with $L(h_k, s) = L\big(\chi^{k-1}, s - \frac{k-1}{2}\big)$, we discovered the following congruence relations numerically:
\begin{gather*}
a_3(n)\equiv a_2(n) \mod 3,\\
a_4(n)\equiv a_2(n) \mod 8,\\
a_5(n)\equiv a_2(n) \mod 9,\\
a_6(n)\equiv a_2(n) \mod 240.
\end{gather*}The first one can be proved immediately since for $n=p\equiv 1\mod 3$, $a_3(p)= a_2(p)^2-2p$ and $a_2(p)\equiv -1\mod 3$. Likewise, the other congruences can be obtained case by case from the expression
\begin{gather*}
a_{k+1}(p)=\big(a+b\sqrt{-3}\big)^k+\big(a-b\sqrt{-3}\big)^k, \qquad a\equiv 1 \mod 3,
\end{gather*}
for primes $p\equiv 1\mod 3$. Deeper relations are subject to future investigation.

\subsection*{Acknowledgments}
The research of Li is partially supported by Simons Foundation grant \# 355798, Long by NSF grant DMS \#1602047, and Tu by the AWM-NSF Mentoring Travel Grant which supported her visit to Pennsylvania State University to work with Li in 2017 {and 2018}. The authors would like to thank the anonymous referees, Jerome W.~Hoffman, Ming-Lun Hsieh, Ye Tian, Yifan Yang, and Wadim Zudilin for their helpful comments and feedback. At the early stage of this work the authors benefited from their visits to the Institute of Mathematical Research in the University of Hong Kong in summer 2017. To IMR they express their gratitude for its hospitality.

\pdfbookmark[1]{References}{ref}
\LastPageEnding

\end{document}